\documentclass[12pt]{amsart}
\usepackage{amsmath,amssymb,amsfonts}
\usepackage{mathtools}
\usepackage[shortlabels]{enumitem}
\setlist{leftmargin=*}

\usepackage[colorlinks=true, linkcolor=blue]{hyperref}

\usepackage[sorted]{amsrefs}

\newtheorem{thm}{Theorem}[section]
\newtheorem*{thm*}{Theorem}
\newtheorem{cor}[thm]{Corollary}
\newtheorem{lem}[thm]{Lemma}
\newtheorem{prop}[thm]{Proposition}
\newtheorem{prob}[thm]{Problem}

\newtheorem{claim}[thm]{Claim}
\newtheorem*{claim*}{Claim}

\newtheorem{fact}[thm]{Fact}

\theoremstyle{definition}
\newtheorem{defn}[thm]{Definition}
\theoremstyle{remark}

\newtheorem{rem}[thm]{Remark}

\newtheorem{sample}[thm]{Example} \numberwithin{equation}{section}
    {\medskip\begingroup\leftskip 0.5cm\rightskip 0.5cm\noindent\begin{small}{\bf Remark.}}
    {\end{small}\par\endgroup}
{\begin{list}{$\bullet$}
 {\settowidth{\labelwidth}{\textsf{$\bullet$}} \setlength{\leftmargin}{10pt}}}
{\end{list}}

\newcounter{ssample}[section]

{\color{Bittersweet} \noindent Example \refstepcounter{ssample}\hbox{\bf \arabic{section}.\arabic{ssample}.}}
{}

\newcounter{insertcount}

    {\color{blue} \medskip\begingroup\noindent\begin{small}{\color{blue} \stepcounter{insertcount}
          {
            \bf Insert \arabic{insertcount}. #1.}
            \addcontentsline{toc}{subsection}{{\ \ \small  Insert \arabic{insertcount}: #1}}
               \leavevmode  }
           }
    {\end{small}\par\endgroup}

\newcommand{\mrmk}[1]
{{\tiny$^{\spadesuit}$}\marginpar{\fbox{\footnotesize #1}}}
\def\strutdepth{\dp\strutbox}%
\def\marginalnote#1{\strut\vadjust{\kern-\strutdepth\specialnote{#1}}}%
\def\specialnote#1{\vtop to \strutdepth{\baselineskip%
\strutdepth\vss\llap{\hbox{\scriptsize \bf #1}}\null}}%



\newcommand{\RR}{\mathbb{R}}




\newcommand{\NN}{\mathbb N}


\reversemarginpar




\def\acl{\operatorname{acl}}


\newcommand*\bbar[1]{%
  \hbox{%
    \vbox{%
      \hrule height 0.5pt 
      \kern0.5ex
      \hbox{%
        \kern-0.1em
        \ensuremath{#1}%
        \kern-0.1em
      }%
    }%
  }%
}

\def\Ind#1#2{#1\setbox0=\hbox{$#1x$}\kern\wd0\hbox to 0pt{\hss$#1\mid$\hss}
\lower.9\ht0\hbox to 0pt{\hss$#1\smile$\hss}\kern\wd0}

\def\ind{\mathop{\mathpalette\Ind{}}}

\def\notind#1#2{#1\setbox0=\hbox{$#1x$}\kern\wd0
\hbox to 0pt{\mathchardef\nn=12854\hss$#1\nn$\kern1.4\wd0\hss}
\hbox to 0pt{\hss$#1\mid$\hss}\lower.9\ht0 \hbox to 0pt{\hss$#1\smile$\hss}\kern\wd0}

\title[Zarankiewicz's problem for semilinear hypergraphs] {Zarankiewicz's problem for semilinear hypergraphs}

\author[A. Basit]{Abdul Basit} 
\address{Department of Mathematics\\
Iowa State University\\
Ames, IA, 50011, USA}
\email{abasit@iastate.edu}

\author[A. Chernikov]{Artem Chernikov} 
\address{Department of Mathematics\\
University of California Los Angeles\\
Los Angeles, CA 90095-1555} 
\email{chernikov@math.ucla.edu}

\author[S. Starchenko]{Sergei Starchenko}
\address{Department of Mathematics\\
University of Notre Dame\\
Notre Dame, IN, 46656, USA}
\email{sstarche@nd.edu}

\author[T.Tao]{Terence Tao} 
\address{Department of Mathematics\\
University of California Los Angeles\\
Los Angeles, CA 90095-1555} 
\email{tao@math.ucla.edu}

\author[C.-M.Tran]{Chieu-Minh Tran}
\address{Department of Mathematics\\
University of Notre Dame\\
Notre Dame, IN, 46656, USA}
\email{mtran6@nd.edu}

\begin{document}
\maketitle
\begin{abstract}
A bipartite graph $H = \left(V_1, V_2; E \right)$ with $|V_1| + |V_2| = n$ is \emph{semilinear} if $V_i \subseteq \mathbb{R}^{d_i}$ for some $d_i$ and the edge relation $E$ consists of the pairs of points $(x_1, x_2) \in V_1 \times V_2$ satisfying a fixed Boolean combination of $s$ linear equalities and inequalities in $d_1 + d_2$ variables for some $s$. We show that for a fixed $k$, the number of edges in a $K_{k,k}$-free semilinear $H$ is almost linear in $n$, namely $|E| = O_{s,k,\varepsilon}(n^{1+\varepsilon})$ for any $\varepsilon > 0$; and more generally, $|E| = O_{s,k,r,\varepsilon}(n^{r-1 + \varepsilon})$ for a $K_{k, \ldots,k}$-free semilinear $r$-partite $r$-uniform hypergraph.

As an application, we obtain the following incidence bound: given $n_1$ points and  $n_2$ open boxes with axis parallel sides in $\mathbb{R}^d$ such that their incidence graph is $K_{k,k}$-free, there can be at most $O_{k,\varepsilon}(n^{1+\varepsilon})$ incidences. The same bound holds if instead of boxes one takes polytopes cut out by the translates of an arbitrary fixed finite set of halfspaces.

	We also obtain matching upper and (superlinear) lower bounds in the case of dyadic boxes on the plane, and point out some connections to the model-theoretic trichotomy in $o$-minimal structures (showing that the failure of an almost linear bound for some definable graph allows one to recover the field operations from that graph in a definable manner).
\end{abstract}

\section{Introduction}

We fix $r \in \mathbb{N}_{\geq 2}$ and let $H = \left(V_1, \ldots, V_r; E \right)$ be an $r$-partite and $r$-uniform hypergraph (or just \emph{$r$-hypergraph} for brevity) with vertex sets $V_1, \ldots, V_r$ having  $|V_i| = n_i$, (hyper-) edge set $E$, and $n = \sum_{i=1}^r n_i$ being the total number of vertices.

\emph{Zarankiewicz's problem} asks
for the maximum number of edges in such a hypergraph $H$ (as a function of $n_1, \ldots, n_r$) assuming that it does not contain the complete $r$-hypergraph $K_{k, \ldots, k}$ with $k > 0$ a fixed number of vertices in each part. 
The following classical upper bound is due to K\H{o}v\'ari, S\'os and Tur\'an \cite{kovari1954problem} for $r=2$ and Erd\H os \cite{erdos1964extremal} for a general $r$: if $H$ is $K_{k, \ldots, k}$-free, then $|E| = O_{r,k} \left(n^{r - \frac{1}{k^{r-1}}} \right)$. A probabilistic construction in \cite{erdos1964extremal} also shows  that the exponent cannot be substantially improved.

However, stronger bounds are known for restricted families of  hypergraphs arising in geometric settings. For example, if $H$ is  the incidence graph of a set of $n_1$ points and $n_2$ lines in $\RR^2$, then $H$ is $K_{2,2}$-free, and K\H{o}v\'ari-S\'os-Tur\'an Theorem implies $|E| = O( n^{3/2})$. The Szemer\'{e}di-Trotter Theorem~\cite{szemeredi1983extremal} improves this and gives the  optimal bound $|E| = O(n^{4/3})$. 
More generally, \cite{fox2017semi} gives improved bounds for {\em semialgebraic} graphs of bounded description complexity. 
This is generalized to semialgebraic hypergraphs in \cite{do2018zarankiewicz}. In a different direction, the results in~\cite{fox2017semi} are generalized to graphs definable in $o$-minimal structures in \cite{basu2018minimal} and, more generally, in distal structures in \cite{chernikov2020cutting}.

A related highly nontrivial problem is to understand when the bounds offered by the results in the preceding paragraph are sharp. When $H$ is the incidence graph of $n_1$ points and $n_2$ circles of unit radius in $\RR^2$, the best known upper bound is $|E| =O(n^{4/3})$, proven in~\cite{spencer1984unit} and also implied by the general bound for semialgebraic graphs. Any improvement to this bound will be a step toward resolving the long standing unit distance conjecture of Erd\H{o}s (an almost linear bound of the form $|E| =O(n^{1+c/\log\log n})$ will positively resolve it). 

This paper was originally motivated by the following incidence problem. Let $H$ be the incidence graph of a set of $n_1$ points and a set of $n_2$ solid rectangles \emph{with axis-parallel sides} (which we refer to as \emph{boxes}) in $\mathbb{R}^2$. Assuming that $H$ is $K_{2,2}$-free, i.e.~no two points belong to two rectangles simultaneously, what is the maximum number of incidences $|E|$? In the following theorem, we obtain an almost linear bound (which is much stronger than the bound implied by the aforementioned general result for semialgebraic graphs) and demonstrate that it is close to optimal.
\vspace{-1em}
\begin{thm*}[A]
\begin{enumerate}
	\item For any set $P$ of $n_1$ points in $\mathbb{R}^2$ and any set $R$ of $n_2$  boxes in $\mathbb{R}^2$, if the incidence graph on $P \times R$ is $K_{k,k}$-free, then it contains at most $O_k \left(n \log^{4}(n) \right)$ incidences (Corollary \ref{cor: inc with boxes} with $d=2$).
	\item If all boxes in $R$ are \emph{dyadic} (i.e.~direct products of intervals of the form $[s2^t, (s+1)2^t)$ for some integers $s,t$), then the number of incidences is at most $O_k \left( n \frac{\log(100+n_1)}{\log\log(100+n_1)} \right)$ (Theorem \ref{thm: upper bound for dyadic rects}).
	\item For an arbitrarily large  $n$,  there exist a set of $n$ points and $n$ dyadic boxes in $\mathbb{R}^2$ so that the incidence graph is $K_{2,2}$-free and the number of incidences is $\Omega \left(n \frac{\log(n)}{\log\log(n)} \right)$ (Proposition \ref{prop: lower bound for boxes}).
\end{enumerate}
\end{thm*}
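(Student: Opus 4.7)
The three claims of Theorem~A are stated as forward references to results proved later in the paper; I outline my plan for each.

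Part (2) is the combinatorial heart. The crucial structural fact is that any two dyadic intervals on the line are either disjoint or nested, so for a fixed point $p \in \mathbb{R}^2$ the dyadic boxes $B \in R$ containing $p$ form a pair of chains (one per coordinate). My plan is to stratify incidences $(p, B)$ by the pair of side-lengths $(2^{t_1}, 2^{t_2})$ of $B$: within a single scale pair, dyadic boxes partition $\mathbb{R}^2$, so counting is straightforward; across scales, the $K_{k,k}$-free condition is what couples them. A naive sum over scales loses a $\log^2 n_1$ factor, and the $\log / \log\log$ savings come from grouping scales into blocks of size roughly $\log\log n_1$ and applying a K\H{o}v\'ari--S\'os--Tur\'an estimate within each block before summing. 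The main technical obstacle is the balancing: choosing the block size so that the within-block Zarankiewicz bound matches the across-block contribution, and verifying that the truncation does not lose the savings.

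Part (1) I would deduce from part (2) by standard dyadic decomposition. Each axis-parallel box in $\mathbb{R}^2$ can be written as a disjoint union of $O(\log^2 n)$ canonical dyadic boxes, and the point-box incidences are preserved. The subtle point is that after replacing each $B \in R$ by its $O(\log^2 n)$ dyadic pieces, the new bipartite incidence graph need not be $K_{k,k}$-free with the same $k$; one must argue that it is at worst $K_{k',k'}$-free for some $k'$ depending only on $k$. This is the only delicate step, and I would address it by showing that any $K_{k',k'}$ in the decomposed graph with $k'$ large enough would force, by pigeonhole on the $O(\log^2 n)$ decomposition labels of each piece, a $K_{k,k}$ in the original graph. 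The resulting factor $O(\log^2 n)$ multiplied by the $O(n \log n / \log\log n)$ bound from (2) then yields $O_k(n \log^4 n)$ as claimed.

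For part (3), I would exhibit a construction based on points arranged on a square grid $\{0, 1, \ldots, N-1\}^2$ together with carefully chosen dyadic boxes at several scales. For each dyadic scale $2^t$ with $0 \le t \le \log N$, one selects a family of dyadic boxes of side-length $2^t$ so that the union over scales realizes a $K_{2,2}$-free incidence pattern with roughly $N^2 \log N / \log\log N$ incidences; this parallels well-known lower bounds in orthogonal range searching, and can be adapted to the $K_{2,2}$-free setting by a Sidon-type selection of boxes at each scale to ensure that no two grid points share two common boxes. I do not expect this construction to be the main obstacle; the heart of the theorem lies in the matching upper bound of part~(2).
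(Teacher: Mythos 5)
The main gap is in part (2), which you correctly identify as the heart of the theorem, but for which your plan does not work as described. First, your starting point --- that a naive sum over scales loses only a $\log^2 n_1$ factor --- presupposes that only $O(\log n_1)$ dyadic scales per coordinate are relevant. That is false: the exponents $t$ occurring as side lengths $2^t$ of boxes in $R$ can be essentially arbitrary (up to $n_2$ distinct values), and unlike the general box problem you cannot normalize coordinates by ranks, since that destroys dyadicity; identifying scales that cut out the same point sets only produces a laminar family whose chains can have length $\Theta(n_1)$, not $O(\log n_1)$. Second, even granting a normalization to $O(\log n_1)$ scales, the proposed mechanism --- blocks of roughly $\log\log n_1$ scales with a K\H{o}v\'ari--S\'os--Tur\'an bound inside each block --- does not give the result: KST inside a block containing $m_j$ boxes contributes terms of order $n_1 m_j^{1-1/k}$, and summing over blocks (or taking the minimum with the trivial per-block bound $n_1 b^2$, $b$ the block size) yields quantities like $n_1\sqrt{n_2}\cdot\log n/\log\log n$ or $n_1 n_2/b^2$, far above $n\log n/\log\log n$. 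The $K_{k,k}$-freeness has to be exploited across scales in a structural way; the paper does this by first deleting nested $k$-chains of rectangles (losing only $O(k n_2)$ incidences), then introducing the partial order $I\times J\le I'\times J'$ iff $I\subseteq I'$ and $J\supseteq J'$, and proving a general bound for $K_{k,k}$-free arrangements of $d$-linear sets in locally $d$-linear posets (Theorem \ref{main}) via an iterated descendant-counting and pruning argument. Nothing playing this role appears in your outline, so part (2) --- and with it your route to part (1) --- is not established.

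On the other two parts, briefly. Your reduction of (1) to (2) by dyadic decomposition is a genuinely different route from the paper, which instead deduces (1) from the general semilinear bound (Theorem \ref{thm:main}, via Corollaries \ref{cor: inc with polytopes} and \ref{cor: inc with boxes} with $s=2d=4$ halfplanes, whence the exponent $4$), and your route, if completed, would even improve the exponent; but as stated it has a flaw: an arbitrary axis-parallel box does not decompose into $O(\log^2 n)$ canonical dyadic boxes (for real endpoints the decomposition is unbounded), so you must first move the configuration to an $O(n_1)\times O(n_1)$ integer grid by a coordinatewise monotone reparameterization before decomposing. Conversely, your worry about $K_{k,k}$-freeness is unnecessary: distinct dyadic pieces of one box are disjoint, so any $K_{k,k}$ in the decomposed graph already uses pieces of $k$ distinct original boxes and pulls back to a $K_{k,k}$ with the same $k$. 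For part (3), your sketch leaves the crux unaddressed: the natural all-scales grid construction has $\Theta(n\log n)$ incidences but is badly non-$K_{2,2}$-free, and the entire difficulty is to quantify the thinning needed; an unspecified ``Sidon-type selection'' does not do this. The paper instead builds the example by a recursive product construction (Lemma \ref{lem: stepping up}, iterated in Proposition \ref{prop: lower bound for boxes}) in which $K_{2,2}$-freeness is preserved by construction and the count comes out to $\ell\,\ell^{\ell}$ incidences among $\ell^{\ell}$ points and $\ell^{\ell}$ dyadic boxes.
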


\begin{prob}
While the bound for dyadic boxes is tight, we leave it as an open problem to close the gap between the upper and  the lower  bounds for arbitrary boxes.	
\end{prob}

\begin{rem}
 A related result in \cite{fox2008separator} demonstrates that every $K_{k,k}$-free intersection graph of $n$ convex sets on the plane satisfies $|E| = O_{k}(n)$. Note that in Theorem (B) we consider a $K_{k,k}$-free \emph{bipartite} graph, so in particular there is no restriction on the intersection graph of the boxes in $R$. 
\end{rem}

Theorem~(A.1) admits the following generalization to higher dimensions and more general polytopes.
\begin{thm*}[B]
\begin{enumerate}
	\item For any set $P$ of $n_1$ points and any set $B$ of $n_2$ boxes in  $\mathbb{R}^d$, if the incidence graph on $P \times B$ is $K_{k,k}$-free, then it contains at most $O_{d,k} \left( n  \log^{2 d} n \right)$ incidences (Corollary \ref{cor: inc with boxes}).
	\item More generally, given finitely many half-spaces $H_1, \ldots, H_s$ in $\mathbb{R}^d$, let $\mathcal{F}$ be the family of all possible polytopes in $\mathbb{R}^d$ cut out by arbitrary translates of $H_1, \ldots, H_s$. Then for any set $P$ of $n_1$ points in $\mathbb{R}^d$ and any set $F$ of $n_2$ polytopes in $\mathcal{F}$, if the incidence graph on $P \times F$ is $K_{k,k}$-free, then it contains at most $O_{k,s}\left( n \log^{s} n \right)$ incidences (Corollary \ref{cor: inc with polytopes}).
\end{enumerate}
\end{thm*}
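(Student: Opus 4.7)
My plan is to reduce both parts of the theorem to a single dominance-counting problem in Euclidean space and then attack that problem by induction on the dimension with divide-and-conquer. For part~(2), fix inward normals $n_1, \ldots, n_s \in \mathbb{R}^d$ for the half-spaces $H_1, \ldots, H_s$. Any polytope $P \in \mathcal{F}$ is an intersection $\bigcap_i (H_i + v_i(P))$ for some translation vectors $v_i(P)$, and the condition $x \in H_i + v_i(P)$ can be rewritten as $\langle n_i, x\rangle \leq w_i(P)$, where $w_i(P) := \langle n_i, v_i(P)\rangle$ depends only on $P$. Encoding points by $\phi(x) := (\langle n_1, x\rangle, \ldots, \langle n_s, x\rangle) \in \mathbb{R}^s$ and polytopes by $\psi(P) := (w_1(P), \ldots, w_s(P)) \in \mathbb{R}^s$, the incidence $x \in P$ is equivalent to the coordinatewise inequality $\phi(x) \leq \psi(P)$. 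For part~(1), a box in $\mathbb{R}^d$ is the intersection of $2d$ axis-parallel half-spaces, so the same reduction applies with $s$ replaced by $2d$, explaining the $\log^{2d}$ factor.

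It therefore suffices to prove the following dominance bound: if $A, B \subseteq \mathbb{R}^s$ with $|A| + |B| = n$ and the bipartite graph $E_s := \{(a,b) \in A \times B : a \leq b \text{ componentwise}\}$ is $K_{k,k}$-free, then $|E_s| \leq T(n,s) = O_{k,s}(n \log^s n)$. For the base case $s = 1$, sort $A$ increasingly, so the neighbourhoods $N(a) := \{b \in B : a \leq b\}$ form a chain that shrinks as $a$ grows. If at least $k$ values of $a$ have $|N(a)| \geq k$, then the $k$ largest such $a$ share a common neighbourhood (the smallest one) of size $\geq k$, yielding a $K_{k,k}$. Hence at most $k-1$ vertices of $A$ have degree $\geq k$, and symmetrically for $B$, so $T(n,1) = O_k(n)$.

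For the inductive step, let $m$ be the median of the first coordinates across $A \cup B$ and partition $A = A_L \sqcup A_R$ and $B = B_L \sqcup B_R$ accordingly, so $|A_L| + |B_L|$ and $|A_R| + |B_R|$ are each at most $n/2$. The dominance edges split into four groups: edges in $A_R \times B_L$ vanish (first-coordinate dominance fails); edges in $A_L \times B_L$ and $A_R \times B_R$ remain dominance problems in $\mathbb{R}^s$ on halved inputs; and edges in $A_L \times B_R$ automatically satisfy the first-coordinate constraint, so they form a dominance problem in $\mathbb{R}^{s-1}$ on at most $n$ vertices. Each sub-problem inherits $K_{k,k}$-freeness, yielding
\[
T(n, s) \leq 2T(n/2, s) + T(n, s-1) + O(n),
\]
which together with $T(n,1) = O_k(n)$ unwinds to $T(n,s) = O_{k,s}(n \log^s n)$ by standard analysis.

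The main technical obstacle I foresee is degeneracy: when many points of $A \cup B$ share a first coordinate, a median split need not halve the problem, and ties must be broken without losing $K_{k,k}$-freeness when we drop the first coordinate in the $A_L \times B_R$ sub-problem. A lexicographic tie-breaking on the remaining coordinates, or equivalently a generic perturbation argument applied to the encoding $\phi, \psi$, should suffice; once this is in place the recursion goes through unchanged and yields the claimed polylogarithmic bounds.
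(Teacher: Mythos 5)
Your proposal is correct and follows essentially the same route as the paper: your encoding of incidences as a componentwise dominance relation $\phi(x)\leq\psi(P)$ in $\mathbb{R}^s$ is exactly the paper's reduction to an intersection of $s$ ``coordinate-split'' basic sets, and your recursion $T(n,s)\leq 2T(n/2,s)+T(n,s-1)+O(n)$ via a median split on one coordinate is the paper's recurrence $F_{2,k}(s,n)\leq 2F_{2,k}(s,\lfloor n/2\rfloor)+2F_{2,k}(s-1,n)$, with the tie-breaking issue you flag handled there by splitting off the level set $B_1^{=h}\times B_2^{>h}$ explicitly rather than by perturbation.
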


\begin{prob}\label{prob: log power}
What is the optimal bound on the power of $\log n$ in Theorem (B)? In particular, does it actually have to grow with the dimension $d$?
\end{prob}

\begin{rem}
	A bound similar to Theorem (B.1) and an improved bound for Theorem (A.1) in the $K_{2,2}$-free case are established independently by Tomon and Zakharov in \cite{Tomon}, in which the authors also use our Theorem (A.3) to provide a counterexample to a conjecture of Alon
et al.~\cite{alon2015separation} about the number of edges in a graph of bounded separation dimension, as well as to a conjecture of Kostochka from \cite{kostochka2004coloring}. Some further Ramsey properties of semilinear graphs are demonstrated by Tomon in \cite{tomon2021ramsey}.
\end{rem}

The upper bounds in Theorems (A.1) and (B) are obtained as immediate applications of a general upper bound for Zarankiewicz's problem for \emph{semilinear} hypergraphs of bounded description complexity.

\begin{defn} \label{def: semilin sets}
Let $V$ be an ordered vector space over an ordered division ring $R$ (e.g.~$\mathbb{R}$ viewed as a vector space over itself). A set $X \subseteq V^d$ is \emph{semilinear, of description complexity $(s,t)$} if $X$ is  a union of at most $t$ sets of the form 
\[
\left\{ \bar{x}\in V^{d}:f_{1}\left(\bar{x}\right) \leq 0, \ldots, f_{p} \left(\bar{x}\right) \leq 0,f_{p+1}\left(\bar{x}\right)<0,\ldots,f_{s}\left(\bar{x}\right)<0\right\} \mbox{,}
\]
where $p \leq s \in \mathbb{N}$ and each $f_i: V^d \to V$ is a \emph{linear} function, i.e.,~of the form 
$$f\left(x_{1},\ldots,x_{d}\right)=\lambda_{1}x_{1}+\ldots+\lambda_{d}x_{d}+a$$
for some $\lambda_{i}\in R$ and $a\in V$.
	
\end{defn}
We focus on the case $V=R = \mathbb{R}$ in the introduction, in which case these are precisely the semialgebraic sets that can be defined using only \emph{linear} polynomials.

\begin{rem}\label{rem: semilin iff def}
	By a standard quantifier elimination result \cite[\S 7]{van1998tame}, every set definable in an ordered
  vector space over an ordered division ring, in the sense of model theory, is semilinear (equivalently, a projection of a semilinear set is a finite union of semilinear sets).
\end{rem}

\begin{defn}\label{def: semilin hypergraphs}
We say that an $r$-hypergraph $H$ is \emph{semilinear, of description complexity $(s,t)$} if there exist some $d_i \in \mathbb{N}, V_i \subseteq \mathbb{R}^{d_i}$ and a semilinear set $X \subseteq \mathbb{R}^d = \prod_{i \in [r]} \mathbb{R}^{d_i}$ of description complexity $(s,t)$ so that $H$ is isomorphic to the $r$-hypergraph $\left(V_1, \ldots, V_r; X \cap \prod_{i \in [r]} V_i \right)$.
\end{defn}

We stress that there is no restriction on the dimensions $d_i$ in this definition.
We obtain the following general upper bound.
\begin{thm*}[C]
If $H$ is a semilinear $r$-hypergraph of description complexity $(s,t)$ and $H$ is $K_{k, \ldots, k}$-free, then 
 $$|E| = O_{r,s,t,k} \left( n^{r-1} \log^{s(2^{r-1}-1)}(n) \right).$$
\end{thm*}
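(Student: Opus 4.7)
The plan is to reduce the statement to a dominance/comparison problem in $\mathbb{R}^s$ and then establish the bound by induction on $r$ and $s$ via divide-and-conquer on median splits.

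First, I would pass to disjunctive normal form, so that the semilinear set defining $E$ is a union of at most $t$ conjunctions of $s$ linear inequalities; refining further into the $O_{s}(1)$ sign patterns of the $s$ linear functions, I may assume at the cost of a factor $O_{s,t}(1)$ that $t = 1$ and the edge relation is a single conjunction of $s$ non-strict inequalities $\ell_e \leq 0$. Each $\ell_e(x_1,\ldots,x_r)$ splits as $\sum_{i=1}^r L_{e,i}(x_i)$ for linear $L_{e,i}$ (with the constant absorbed into $L_{e,1}$), so the map $\phi_i(x) = (L_{1,i}(x),\ldots,L_{s,i}(x))$ embeds $V_i$ into $\mathbb{R}^s$. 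Collapsing collisions along each $\phi_i$ costs a factor of at most $k^r$ by $K_{k,\ldots,k}$-freeness. After these reductions, the edges of $H$ are exactly the $r$-tuples $(x_1,\ldots,x_r) \in V_1 \times \cdots \times V_r \subseteq (\mathbb{R}^s)^r$ satisfying $\sum_{i=1}^r x_i[e] \leq 0$ for every $e \in [s]$.

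For the bipartite base case $r = 2$, setting $y = -x_2$ turns the edge relation into coordinate-wise dominance $x_1 \leq y$ in $\mathbb{R}^s$. I would sort $V_1 \cup (-V_2)$ by the $s$-th coordinate and split at the median: within-half edges recurse with the same $s$; cross edges from the low half of $V_1$ to the high half of $(-V_2)$ satisfy the $s$-th inequality automatically and recurse with $s-1$; cross edges of the opposite orientation violate the $s$-th inequality and contribute nothing. This yields $T(n,s) \leq 2T(n/2,s) + T(n,s-1)$, with base case $T(n,0) = O_k(n)$ since $K_{k,k}$-freeness applied to the complete bipartite graph forces $\min(|V_1|,|V_2|) < k$. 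Solving gives $T(n,s) = O_{k,s}(n \log^s n)$, matching the exponent $s(2^{r-1}-1) = s$ at $r=2$.

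For general $r \geq 3$, I would induct on $r$. Each inequality $\sum_i x_i[e] \leq 0$ can, for each proper nonempty $I \subsetneq [r]$, be rewritten as $\sum_{i \in I} x_i[e] \leq -\sum_{i \notin I} x_i[e]$, giving a bipartition of that inequality. Grouping $W_I = \prod_{i \in I} V_i$ and $W_{[r] \setminus I}$ into super-parts and running the bipartite divide-and-conquer above along each of the $2^{r-1}-1$ bipartitions (modulo complementation) of each of the $s$ inequalities is expected to contribute, per inequality per bipartition, one $\log n$ factor, giving $s(2^{r-1}-1)$ in total. The $s = 0$ base case is the complete $r$-partite $r$-hypergraph, whose $K_{k,\ldots,k}$-freeness forces $\min_i |V_i| < k$ and therefore $|E| = O_{r,k}(n^{r-1})$. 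Combining yields $|E| = O_{r,s,t,k}(n^{r-1} \log^{s(2^{r-1}-1)}(n))$.

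The main obstacle is the divide-and-conquer along the super-parts $W_I$ in the $r \geq 3$ case: a median split of $\sum_{i \in I} x_i[e]$ over tuples in $W_I$ does not correspond to a split of any single $V_i$, so a naive recursion does not close. One would need to design an inductive scheme that processes one individual part at a time while cycling through the inequalities and their bipartitions, verifying that the reduced sub-hypergraphs remain semilinear of controlled description complexity and that the $K_{k,\ldots,k}$-free hypothesis propagates to each recursive sub-hypergraph as well as to the eventual base case. Accounting for each of the $2^{r-1}-1$ bipartitions of each of the $s$ inequalities exactly once, so that the final exponent is exactly $s(2^{r-1}-1)$ and not larger, is the combinatorial heart of the argument.
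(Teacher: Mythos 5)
Your $r=2$ argument is essentially sound and is in fact the same divide-and-conquer the paper uses in its base case: a median split on the threshold of the last inequality, with the two cross quadrants either empty or having that inequality satisfied automatically, giving $T(n,s)\leq 2T(n/2,s)+O(T(n,s-1))$ and hence $O_{k,s}(n\log^s n)$. (Two of your preliminary reductions are shaky but inessential: a strict inequality is not a finite union of conjunctions of non-strict ones, and ``collapsing collisions along $\phi_i$ costs a factor $k^r$'' is not justified by $K_{k,\ldots,k}$-freeness, since a single large fiber produces no forbidden grid; neither reduction is actually needed, as one can run the recursion on the original vertices using only the values of the linear forms, handling $<$ and $\leq$ symmetrically.)

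The genuine gap is exactly where you flag it: the induction step for $r\geq 3$ is not carried out, and the scheme you sketch (running the bipartite argument over super-parts $W_I=\prod_{i\in I}V_i$ for all bipartitions $I$) does not close, as you concede. The paper's resolution is different in a crucial way. It splits off only the last coordinate: a constraint $\sum_i L_{e,i}(x_i)\leq 0$ is viewed as $f^r(x^r) < f_r(x_r)$ with $f^r$ a \emph{coordinate-wise monotone} function on the $(r-1)$-grid $B^r=B_1\times\cdots\times B_{r-1}$ (Proposition \ref{prop:coord}). The median split is then performed on the common value scale of $f^r$ and $f_r$, balanced not with respect to any $|V_i|$ but with respect to the potential $\Delta(H)=|H^r|+\delta^{r-1}_{r-2}(H^r)|H_r|$; the resulting halves and cross pieces are products of ``strips'' $\{x^r: f^r(x^r)\,\square\, h\}\times\{x_r: f_r(x_r)\,\square\, h\}$, and the key point is that such a strip is a legitimate subset of the $(r-1)$-grid whose grid-complexity increases by at most $2$. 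This is tracked by the split grid-complexity $(t,u)$: each log-producing step trades one remaining constraint for two extra constraints on the first $r-1$ coordinates, $\beta'(r,t,u)\leq\beta'(r,t+2,u-1)+1$, and when $u=0$ the set is a product $A^r\times A_r$, where $K_{k,\ldots,k}$-freeness forces either $|A_r|<k$ or $A^r$ $K_{k,\ldots,k}$-free, invoking the induction on $r$. Unwinding gives $\beta(r,s)\leq\beta(r-1,2s)+s$, and the exponent $s(2^{r-1}-1)=s+2s+\cdots+2^{r-2}s$ comes from this doubling of the complexity parameter at each dimension drop --- not from counting the $2^{r-1}-1$ bipartitions of $[r]$, so your proposed bookkeeping (``each bipartition of each inequality exactly once'') would not reproduce the bound even if the super-part recursion could be made to close.
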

 In particular $|E| = O_{r,s,t,k,\varepsilon} \left( n^{r-1 + \varepsilon} \right)$ for any $\varepsilon>0$ in this case. For a more precise statement, see Corollary \ref{cor: semilin Zar} (in particular, the dependence of the constant in $O_{r,s,t,k}$ on $k$ is at most linear).
 
 \begin{rem}
 It is demonstrated in \cite{mustafa2015zarankiewicz} that a similar bound holds in the situation when $H$ is the intersection hypergraph of $(d-1)$-dimensional simplices in $\mathbb{R}^d$.
 \end{rem}
 
 \medskip

One can get rid of the logarithmic factor entirely by restricting to the family of all finite $r$-hypergraphs induced by a given $K_{k, \ldots, k}$-free semilinear relation (as opposed to all $K_{k, \ldots, k}$-free $r$-hypergraphs induced by a given arbitrary semilinear relation as in Theorem (C)).
\begin{thm*}[D]
	Assume that $X \subseteq \mathbb{R}^d = \prod_{i \in [r]} \mathbb{R}^{d_i}$ is semilinear and $X$  does not contain the direct product of $r$ infinite sets (e.g.~if $X$ is $K_{k, \ldots, k}$-free for some $k$). Then for any $r$-hypergraph $H$ of the form $\left(V_1, \ldots, V_r; X \cap \prod_{i \in [r]} V_i \right)$ for some finite $V_i \subseteq \mathbb{R}^{d_i}$, we have $|E| = O_X(n^{r-1})$.
\end{thm*}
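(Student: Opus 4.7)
The plan is to argue by induction on $r$. The base case $r=2$ uses a semilinear cell decomposition of $X$ and a dichotomy for each cell: ``thin'' cells contribute $O(n)$ edges, while ``fat'' cells (with infinite fibers in both directions) would contain an infinite $2$-grid, contradicting the hypothesis. The inductive step slices $X$ along the last coordinate and reduces to the $r-1$ case modulo a key structural lemma, which will be the main technical obstacle.

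For $r=2$, apply the semilinear cell decomposition available in ordered vector spaces via quantifier elimination (see \cite[Ch.~7]{van1998tame}) to write $X = C_1 \sqcup \cdots \sqcup C_m$ as a finite disjoint union of cells compatible with the factorization $\mathbb{R}^{d_1} \times \mathbb{R}^{d_2}$. The cell structure ensures uniform fiber behavior within each $C_j$, giving a trichotomy: either (a) the $\pi_1$-fibers $(C_j)_a$ are of uniformly bounded size $K_j$ for all $a \in \pi_1(C_j)$ (contributing $\leq K_j n_1 = O_X(n)$ edges), or the symmetric statement holds for $\pi_2$-fibers, or (b) both $\pi_1$- and $\pi_2$-fibers over $C_j$ are infinite. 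In case (b), the piecewise-linear inequalities defining $C_j$ allow one to fix the ``coupling'' coordinates to constants, producing an infinite product $A \times B \subseteq C_j$ (verified in each of the finitely many linear types of such cells), and hence an infinite $2$-grid in $X$, contradicting the hypothesis. Summing the $O_X(n)$ contributions from cells in cases (a) across the finitely many cells gives $|E| = O_X(n)$.

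For the inductive step, assume the result for $r-1$. For $v_r \in \mathbb{R}^{d_r}$, let $X^{v_r} := \{(x_1,\ldots,x_{r-1}) : (x_1,\ldots,x_{r-1},v_r) \in X\}$, a semilinear set, and set
\[
S := \{v_r \in \mathbb{R}^{d_r} : X^{v_r} \text{ contains an infinite } (r-1)\text{-grid}\}.
\]
The set $S$ is itself semilinear: for a semilinear subset of $\prod_{i<r}\mathbb{R}^{d_i}$, containing an infinite grid is equivalent to containing a product of nonempty open sets (by the same cell-decomposition reasoning as in the base case), which is first-order expressible and hence defines a semilinear subset of $\mathbb{R}^{d_r}$ (Remark~\ref{rem: semilin iff def}). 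The \emph{Key Lemma} asserts that $S$ is finite, with $|S| \leq C_X$. Granted this, split $V_r = (V_r \cap S) \sqcup (V_r \setminus S)$: the contribution from $V_r \cap S$ is at most $C_X \prod_{i<r} n_i = O_X(n^{r-1})$. For $v_r \in V_r \setminus S$, the fiber $X^{v_r}$ has no infinite $(r-1)$-grid, so the inductive hypothesis yields $|X^{v_r} \cap \prod_{i<r} V_i| = O(n^{r-2})$; the constant is uniform in $v_r$ because, under a cell decomposition of $X$ compatible with the projection to $\mathbb{R}^{d_r}$, the fibers $\{X^{v_r}\}$ split into finitely many piecewise-linearly equivalent classes, and the bound in Theorem~(D) is invariant under linear bijections. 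Summing gives $n_r \cdot O_X(n^{r-2}) = O_X(n^{r-1})$.

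The main obstacle is the Key Lemma, proved by contradiction. Suppose $S$ is infinite; then $S$ contains an infinite semilinear subcell $\ell \subseteq \mathbb{R}^{d_r}$. For each $v_r \in \ell$, the open grid inside $X^{v_r}$ sits inside some fat cell $D^{v_r}$ of the fiber, which is in turn the $v_r$-fiber of a cell $D$ of $X$ with generically open $v_r$-fibers. Pigeonholing over the finitely many cells of $X$, a single cell $D$ serves all $v_r$ in some infinite subset $B \subseteq \ell$. Since the defining inequalities of $D$ are piecewise-linear in $v_r$, the same ``generic interior point'' construction as in the base case yields a fixed open $(r-1)$-grid $A_1 \times \cdots \times A_{r-1}$ contained in $D^{v_r}$ for all $v_r$ in some subcell $B' \subseteq B$. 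Then $A_1 \times \cdots \times A_{r-1} \times B' \subseteq D \subseteq X$ is an infinite $r$-grid, a contradiction. The essential use of semilinearity (as opposed to general o-minimality) is the piecewise-linear variation of cell boundaries, which ensures that the pointwise existence of open grids in fibers propagates to a joint open grid under small perturbations of the base coordinate $v_r$.
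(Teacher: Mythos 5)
Your induction collapses at the Key Lemma: it is false, and the theorem explicitly allows arbitrary block dimensions $d_i$, which is exactly where it fails. Take $r=3$, $d_1=2$, $d_2=d_3=1$ and $X=\{((a_1,a_2),b,c)\in\mathbb{R}^2\times\mathbb{R}\times\mathbb{R} : a_1=c\}$. This $X$ is semilinear and even $K_{2,2,2}$-free (if $A\times B\times C\subseteq X$ then every $c\in C$ equals $a_1$ for any fixed $a\in A$, so $|C|\leq 1$), yet for \emph{every} $v\in\mathbb{R}$ the fiber $X^{v}$ contains the infinite $2$-grid $(\{v\}\times\mathbb{R})\times\mathbb{R}$, so your set $S$ is all of $\mathbb{R}$, not finite. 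The root cause is your claim that containing an infinite grid is equivalent to containing a product of nonempty open sets: this is false once some $d_i\geq 2$ (here the first factor of any grid in $X^v$ is forced into the hyperplane $a_1=v$, which has empty interior and moves with $v$), and with it both the "joint open grid'' propagation in your proof of the Key Lemma and your argument that $S$ is semilinear break down. Since $|V_r\cap S|$ can then be as large as $n_r$, your split of $V_r$ only yields the trivial bound $O(n^r)$. The phenomenon in this example --- a coordinate \emph{inside} a block being determined by the other blocks while the fiber still contains infinite grids --- is precisely what the paper's argument is built to handle: Lemma \ref{lem: loc mod} shows (using weak local modularity and the exchange property) that in a grid-free definable relation some coordinate of every tuple is algebraic over the others, elimination of $\exists^{\infty}$ then gives a uniform bound $m$ on the finite fibers, and a distality result converts "no product of infinite sets'' into $K_{k,\dots,k}$-freeness for a uniform $k$; none of this is visible at the level of open boxes.

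Two further points. First, even where fibers avoid infinite grids, your uniformity claim for the inductive constant is unjustified: fibers of a semilinear family over a cell of the base need not fall into finitely many linear (or affine) equivalence classes --- e.g.\ the trapezoids $\{(x_1,x_2): 0<x_2<1,\ 0<x_1,\ x_1+x_2<v\}$ have parallel-side ratio $v/(v-1)$ and are pairwise affinely inequivalent --- so "invariance under linear bijections'' does not make the constant independent of $v_r$; a correct version must run the induction for definable \emph{families}, with a constant depending only on the defining formula, which is exactly how Theorem \ref{thm: non-cart lin} is stated. Second, your base case $r=2$ is essentially sound and can be made rigorous by writing $X$ as a finite union of convex pieces: if a piece has an infinite fiber in each direction, the midpoint slice of the join of a segment $\{a\}\times J$ and a segment $I\times\{b\}$ gives the product $\frac{1}{2}(a+I)\times\frac{1}{2}(J+b)$ of two infinite sets; otherwise all its fibers in one direction are singletons and it contributes $O(n)$. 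But as the inductive step rests on the false Key Lemma, the proposal does not prove the theorem.
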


This is Corollary \ref{cor: semilin wo log} and follows from a more general Theorem \ref{thm: non-cart lin} connecting linear Zarankiewicz bounds to a model-theoretic notion of \emph{linearity} of a first-order structure (in the sense that the matroid given by the algebraic closure operator behaves like the linear span in a vector space, as opposed to the algebraic closure in an algebraically closed field --- see Definition \ref{def: weak loc mod}).

In particular, for every $K_{k,k}$-free semilinear relation $X \subseteq \mathbb{R}^{d_1} \times \mathbb{R}^{d_2}$ (equivalently, $X$ definable with parameters in the first-order structure $(\mathbb{R}, <, +)$ by Remark \ref{rem: semilin iff def}) we have $|X \cap (V_1 \times V_2)| = O(n)$ for all $V_i \subseteq \mathbb{R}^{d_i}_i, |V_i| = n_i, n = n_1 + n_2$. One the other hand, by optimality of the Szemer\'{e}di-Trotter bound, for the semialgebraic $K_{2,2}$-free point-line incidence graph $X = \{(x_1,x_2; y_1, y_2) \in \mathbb{R}^4 : x_2 = y_1 x_1 + y_2 \}\subseteq \mathbb{R}^2 \times  \mathbb{R}^2$ we have $|X \cap (V_1 \times V_2)| = \Omega(n^{\frac{4}{3}})$.
Note that in order to define  it  we use both addition and multiplication, i.e.~the field structure. 
This is not coincidental --- as a consequence of the trichotomy theorem in $o$-minimal structures \cite{peterzil1998trichotomy}, we observe that the failure of a linear Zarankiewicz bound always allows to recover the field in a definable way (Corollary \ref{cor: lin iff loc mod}). In the semialgebraic case, we have the following corollary that is easy to state (Corollary \ref{cor: rec mult from failed zar}).
\begin{thm*}[E]
	Assume that $X \subseteq \mathbb{R}^d = \prod_{i \in [r]} \mathbb{R}^{d_i}$ for some $r,d_i \in \mathbb{N}$ is semialgebraic and $K_{k, \ldots, k}$-free, but $|X \cap \prod_{i \in [r]} V_i| \neq O(n^{r-1})$.
		 Then the graph of multiplication $\times \restriction_{[0,1]}$ restricted to the unit box is definable in $(\mathbb{R}, <, +, X)$.
\end{thm*}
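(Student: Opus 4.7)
The plan is to argue by contradiction, combining Theorem~(D) with the Peterzil--Starchenko trichotomy for $o$-minimal structures. Let $\CM := (\mathbb{R}, <, +, X)$. Since $X$ is semialgebraic, $\CM$ is a reduct of the $o$-minimal real field $(\mathbb{R}, <, +, \times)$, so every set definable in $\CM$ is semialgebraic and $\CM$ is itself $o$-minimal.

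First I would show that $X$ is not semilinear. Indeed, if $X$ were semilinear, then by Theorem~(D), the $K_{k,\ldots,k}$-freeness of $X$ would force $|X \cap \prod_{i \in [r]} V_i| = O_X(n^{r-1})$, contradicting the hypothesis. Hence $\CM$ defines a set that is not definable in the pure semilinear reduct of the real field.

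Next I would invoke the Peterzil--Starchenko trichotomy: any $o$-minimal expansion of the ordered abelian group $(\mathbb{R}, <, +)$ that defines a non-semilinear set must define a real closed field on some open interval. Because $\CM$ is itself a reduct of the real field, a rigidity statement of Peterzil--Starchenko (using that $\mathbb{R}$ admits no proper real closed subfield structure definable in $(\mathbb{R}, <, +, \times)$) identifies any such $\CM$-definable RCF, via an $\CM$-definable isomorphism, with the restriction of the standard real field $(\mathbb{R}, <, +, \times)$ to some nonempty open interval. Thus the graph of standard multiplication, restricted to some product $U \times U$ of a nonempty open interval $U$, is definable in $\CM$.

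Finally, having definability of multiplication on $U \times U$, one propagates it to $[0,1]^2$ using only the additive structure: translating $U$ to contain $0$, scaling by positive integers (which is iterated $+$) to shrink any $x,y \in [0,1]$ into $U$, applying the definable product there, and rescaling back via distributivity to recover $xy$. This yields definability of $\times\restriction_{[0,1]^2}$ in $\CM$, as required. The main obstacle is the middle step: one must carefully combine the trichotomy (to obtain \emph{some} RCF on an interval) with the rigidity of the real field inside its $o$-minimal reducts (to identify that RCF with standard multiplication on an interval in a $\CM$-definable manner). Once that identification is in hand, the transport to $[0,1]^2$ is routine.
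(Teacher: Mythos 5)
Your first step is exactly the paper's: by Theorem (D) (Corollary \ref{cor: semilin wo log}), a semilinear $K_{k,\ldots,k}$-free relation satisfies the $O(n^{r-1})$ bound, so the hypothesis forces $X$ to be non-semilinear. The divergence, and the gap, lies in how you pass from ``$X$ is not semilinear'' to ``$\times\restriction_{[0,1]^2}$ is definable in $\mathcal{M}=(\mathbb{R},<,+,X)$''. You route this through the general $o$-minimal trichotomy (yielding an abstract real closed field $(I,\oplus,\otimes)$ definable in $\mathcal{M}$ on some interval) followed by a ``rigidity'' step identifying $(I,\oplus,\otimes)$ with standard multiplication via an $\mathcal{M}$-definable isomorphism. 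The justification you offer for rigidity --- that $\mathbb{R}$ admits no proper definable real closed subfield structure --- addresses the isomorphism \emph{type} of the definable field, not the definability of the isomorphism \emph{in the reduct} $\mathcal{M}$. A priori the isomorphism $(I,\oplus,\otimes)\cong(\mathbb{R},+,\times)$ is only semialgebraic (or merely an abstract ordered-field isomorphism coming from Dedekind completeness), and that is not enough: the whole difficulty is that $\mathcal{M}$ is a proper reduct of the real field, so the isomorphism, or directly the graph of standard multiplication, must be produced using only $<$, $+$ and $X$. That is the hard content of the step, and it is asserted rather than supplied.

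The paper sidesteps this entirely by citing the Marker--Peterzil--Pillay theorem on additive reducts of real closed fields (Fact \ref{fac: mini trich}): any semialgebraic, non-semilinear set $Y$ already makes $\times\restriction_{[0,1]^2}$ definable in $(\mathbb{R},<,+,Y)$. With that single citation your steps involving the trichotomy, rigidity, and propagation collapse; in particular the final transport from a small box to $[0,1]^2$ (which is fine as written, using division by a fixed integer $N$, definable from $+$, and the identity $xy=N^{2}\,(x/N)(y/N)$) becomes unnecessary. If you want to keep the trichotomy route, you must quote a precise statement guaranteeing that the real closed field produced by the trichotomy over $\mathbb{R}$ is, up to an isomorphism definable in the \emph{given} reduct, the standard one; as it stands, that step is a genuine gap, and the clean way to fill it is exactly the result the paper invokes.
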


We conclude with a brief overview of the paper.

In Section \ref{sec: upper bound} we introduce a more general class of hypergraphs definable in terms of \emph{coordinate-wise monotone} functions (Definition \ref{def: coord-wise mon funct}) and prove an upper Zarankiewicz bound for it (Theorem \ref{thm:main}). Theorems  (A.1), (B) and (C) are then deduced from it in Section \ref{sec: appl of main thm}.

In Section \ref{sec: lower bounds} we prove Theorem (A.3) by establishing a lower bound on the number of incidences between points and dyadic boxes on the plane, demonstrating that the logarithmic factor is unavoidable  (Proposition \ref{prop: lower bound for boxes}).

In Section \ref{sec: dyadic rects}, we establish Theorem (A.2) by obtaining a stronger bound on the number of incidences with \emph{dyadic} boxes on the plane (Theorem \ref{thm: upper bound for dyadic rects}). We use a different argument relying on a certain partial order specific to the dyadic case to reduce from $\log^4(n)$ given by the general theorem above to $\log(n)$. Up to a constant factor, this implies the same bound for incidences with general boxes when one only counts incidences that are bounded away from the border (Remark \ref{rem: good incidences}). 

Finally, in Section \ref{sec: loc mod}, we prove a general Zarankiewicz bound for definable relations in weakly locally modular geometric first-order structures (Theorem \ref{thm: non-cart lin}), deduce Theorem (D) from it (Corollary \ref{cor: semilin wo log}) and observe how to  recover a real closed field from the failure of Theorem (D) in the $o$-minimal case (Corollary \ref{cor: lin iff loc mod}).

\subsection*{Acknowledgements}
We thank the referees for their very helpful suggestions on improving the paper. Artem Chernikov was partially supported by the NSF CAREER grant DMS-1651321 and by a Simons Fellowship. He is grateful to Adam Sheffer for some very helpful conversations, and to the
American Insitute of Mathematics for additional support.
Sergei Starchenko was supported by the NSF Research Grant DMS-1800806.
Terence Tao was partially supported by NSF grant DMS-1764034 and by a Simons Investigator Award.

\section{Upper bounds}\label{sec: upper bound}
\subsection{Coordinate-wise monotone functions and basic sets}

\gdef\tdt{{\times}\dotsb{\times}}
For an integer  $r\in \NN_{>0}$, by  \emph{an $r$-grid}  (or a \emph{grid} if $r$ is clear from the context)
we mean a cartesian product $B=B_1\tdt B_r$ of some sets $B_1, \ldots, B_r$. As usual, $[r]$ denotes the set $\left\{1, 2, \ldots, r \right\}$.

If $B=B_1\tdt B_r$ is a grid, then by a \emph{sub-grid} we mean 
a subset $C \subseteq B$ of the form $C=C_1\times \dotsb
\times C_r$ for  some $C_i \subseteq  B_i$.

Let $B$ be an $r$-grid, $S$ an arbitrary set and $f: B \to S$ a function.
For $i \in [r]$, set  $$B^{i} = B_1 \times \cdots B_{i-1} \times B_{i+1}\times\cdots \times B_r,$$ and let $\pi_i: B \to B_i$ and $\pi^i: B \to B^i$ be the projection maps.

For $a \in B^i $ and $b \in B_i$, we write $a \oplus_i b$ for the element $c \in B$ with $\pi^i(c) = a$ and $\pi_i(c) = b$. In particular, when $i = r$, $a \oplus_r b = (a, b)$.

\begin{defn}\label{def: coord-wise mon funct}
Let $B$ be an $r$-grid and $(S,<)$ a linearly ordered set. A function $f\colon B\to S$ is
\emph{coordinate-wise monotone} if for any $i\in [r]$, $a,a'\in B^i$
and $b,b'\in B_i$ we have
\[ f(a \oplus_i b)\leq f(a \oplus_i b') \Longleftrightarrow f(a'\oplus_i b)\leq f({a'}\oplus_i b'). \]
\end{defn}

\begin{rem}
	Let $B =B_1\tdt B_r$ be an $r$-grid and $\Gamma$ an ordered abelian group.
  We say that a function $f\colon
  B\to \Gamma$ is \emph{quasi-linear} if  there exist some functions $f_i\colon B_i\to
  \Gamma$, $i\in [r]$, such that  \[ f(x_1,\dotsc,x_r)=f_1(x_1)+\dotsb +f_r(x_r). \]
  
Then every quasi-linear function is coordinate-wise monotone (as  
  $
  f(a\oplus_i b) \leq f({a} \oplus_i b') \Leftrightarrow f_i(b) \leq f_i(b')$ for any $a \in B^i$).

\end{rem}

\begin{sample}\label{ex: lin f impl mon}

  Suppose that $V$ is an ordered vector space over an ordered division ring $R$,  $d_i \in \mathbb{N}$ for $i \in [r]$, and $f: V^{d_1} \times \cdots \times V^{d_r} \to V $ is a linear function. Then $f$ is obviously quasi-linear, hence coordinate-wise monotone.
\end{sample}

\begin{rem} Let $B$ be a grid and $C \subseteq B$ a sub-grid.
  If $f\colon B\to S$ is a coordinate-wise monotone function then the
  restriction $f{\restriction C}$ is a coordinate-wise monotone function on $C$.
\end{rem}

\begin{defn}
Let $B$ be an $r$-grid. 
A subset $X\subseteq B$ is
\emph{a basic set} if there exists a linearly ordered set $(S,<)$, a coordinate-wise monotone function
$f\colon B\to S$ and $l\in S$ such that $X= \left\{ b\in B \colon f(b) <
l\right\}$.
\end{defn}

\begin{rem}
If $r=1$, then every subset of $B=B_1$ is basic.	
\end{rem}

\begin{rem}\label{rem : leq is basic}
  If $X\subseteq B$  is given by  $X= \left\{ b\in B \colon f(b) \leq
l\right\}$ for
some coordinate-wise monotone function
$f\colon B\to S$, then $X$ is a basic set as well. Indeed, we can just
add a new element $l'$ to $S$ so that it is a successor of $l$, then
$X=\left\{b \in B:  f(b)< l'\right\}$.

Similarly, the sets $\left\{ b\in B \colon f(b) >
l\right\}, \left\{ b\in B \colon f(b) \geq
l\right\}$ are basic, by inverting the order on $S$.
\end{rem}

We have the following ``coordinate-splitting'' presentation for basic sets.

\begin{prop}\label{prop:coord}
Let $B=B_1\tdt B_r$ be an $r$-grid and $X\subseteq B$ a basic set. Then there is a linearly ordered
set $(S,<)$, a coordinate-wise monotone function
$f^r\colon B^r \to S$ and a function $f_r\colon B_r\to S$ such that
$X=\left\{ b^r \oplus_{r} b_r \colon f^r(b^r) < f_r(b_r) \right\}$.  
\end{prop}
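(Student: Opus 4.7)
The plan is to construct $S$, $f^r$, and $f_r$ directly from a coordinate-wise monotone $f\colon B\to(S^*,<)$ with $X=\{b\in B:f(b)<l\}$. First I would observe that the slices $Y_{b_r}:=\{b^r\in B^r:f(b^r,b_r)<l\}$ form a chain under inclusion: coordinate-wise monotonicity of $f$ at the $r$-th coordinate yields a total preorder $\leq_r$ on $B_r$ by $b_r\leq_r b_r'\iff f(a,b_r)\leq f(a,b_r')$ for some (equivalently, every) $a\in B^r$, and $b_r\leq_r b_r'$ forces $Y_{b_r'}\subseteq Y_{b_r}$.

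Next I would build a total order $\preceq$ on $B^r$ such that each $Y_{b_r}$ is an initial segment and such that the identity map $B^r\to(B^r,\preceq)$ is coordinate-wise monotone. The primary key of $\preceq$ is the \emph{profile} $T_{b^r}:=\{b_r:b^r\in Y_{b_r}\}$ (a downward-closed subset of $(B_r,\leq_r)$): declare $b^r\prec b^{r\prime}$ when $T_{b^r}\supsetneq T_{b^{r\prime}}$. For the tiebreaker within a single profile class, build analogous preorders $\leq_{B_i}$ on each $B_i$ (from coordinate-wise monotonicity of $f$ at coordinate $i$), pick arbitrary linearizations extending them, and use the induced lexicographic order on $B^r$. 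Take $S$ to be $B^r$ together with fresh cut elements $\{c_{b_r}\}_{b_r\in B_r}$, extend $\preceq$ to $S$ by inserting each $c_{b_r}$ at the boundary between $Y_{b_r}$ and its complement, and set $f^r(b^r):=b^r$ and $f_r(b_r):=c_{b_r}$. Then $X=\{(b^r,b_r):f^r(b^r)<f_r(b_r)\}$ is immediate.

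The main obstacle is verifying coordinate-wise monotonicity of $f^r$. The key tool is a trichotomy forced by coordinate-wise monotonicity of $f$ at each coordinate $i\in[r-1]$: for every pair $b,b'\in B_i$, exactly one of $f(c\oplus_i b,b_r)<f(c\oplus_i b',b_r)$, equality, or strict $>$ holds \emph{uniformly} in $c\in(B^r)^i$ and $b_r\in B_r$ (strictness and equality both propagate, as they are conjunctions of two $\leq$-comparisons). In the strict $<$ case one has $T_{c\oplus_i b}\supseteq T_{c\oplus_i b'}$ for every $c$ together with $b<_{B_i}b'$ strictly: if the $T$'s coincide the lex tiebreaker settles the comparison using only coord $i$, otherwise the primary key does, and in either sub-case $c\oplus_i b\prec c\oplus_i b'$ uniformly in $c$. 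The strict $>$ case is symmetric, and the equality case reduces to a lex comparison at coord $i$ alone. In every case the $\preceq$-comparison of $c\oplus_i b$ and $c\oplus_i b'$ depends only on $b$ and $b'$, which is precisely the defining property of $f^r$ being coordinate-wise monotone.
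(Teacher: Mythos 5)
Your proof is correct and follows essentially the same strategy as the paper's: both reduce the problem to building a linear order on $B^r$ together with cut points for the elements of $B_r$, using the coordinate preorders induced by $f$ and the incidence relation between slices, and then take $f^r$ and $f_r$ to be (essentially) identity/insertion maps, with coordinate-wise monotonicity of $f^r$ following because the linear order refines the coordinate-wise dominance order. The only difference is tactical: where the paper checks abstractly that the union of the dominance order $<^r$ and the transitive closure of the incidence relation $\triangleleft$ is a strict partial order and then invokes an arbitrary linear extension, you construct the extension explicitly (profile of slices as primary key, lexicographic tiebreaker on linearized coordinate preorders), which is a correct, self-contained realization of the same idea.
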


\begin{rem}
\label{rem:coord}
The converse of this proposition is also true:  an arbitrary linear order $(S,<)$ can be realized as a subset of some ordered abelian group $(G, +, <)$ with the induced ordering (we can take $G := \mathbb{Q}$ when $S$ is at most countable); then define $f: B \to S$ by setting \[ f(b^r \oplus_r b_r) := f^r(b^r) - f_r(b_r), \mbox{ and } l := 0. \]

\end{rem}

\begin{proof}[Proof of Proposition \ref{prop:coord}]
	
Assume that we are given a coordinate-wise monotone function
$f\colon B\to S$ and $l\in S$ with $X= \left\{ b\in B \colon f(b) <
l\right\}$.

For  $i\in [r]$, let $\leq_i$ be the  pre-order on $B_i$ induced by $f$,
namely for $b,b'\in B_i$ we set $b\leq_i b'$ if and only if for some
(equivalently, any) $a\in B^i$ we have $f(a \oplus_i b)\leq f(a \oplus_i b')$.

Quotienting $B_i$ by the equivalence relation corresponding to the pre-order $\leq_i$ if needed, we
may assume that each $\leq_i$ is actually a linear order.

Let $<^r$ be the partial order on $B^r$ with $(b_1,\dotsc,b_{r-1}) <^r   (b'_1,\dotsc,b'_{r-1})$ if and only if
\[ (b_1,\dotsc,b_{r-1}) \neq    (b'_1,\dotsc,b'_{r-1})  \text{ and }
  b_j\leq_j b'_j \text{ for all } j\in[r-1].   \]

Let $T := B^r \dot\cup B_r$, where $\dot\cup$ denotes the disjoint union.
Clearly $<^r$ is a \emph{strict} partial order on $T$, i.e.~a transitive and anti-symmetric (hence irreflexive) relation.

For any $b^r\in B^r$ and $b_r\in B_r$ we define 
\[ b^r\triangleleft b_r \text{ if } f(b^r \oplus_r b_r)  <l 
\text{, and }
b_r\triangleleft b^r  \text{ otherwise}. \]

\begin{claim}
  \label{claim:triangle}
  Let  $a_1,a_2\in B^r$, and $b_1,b_2\in B_r$.
  \begin{enumerate}
  \item If $a_1\triangleleft b_1 \triangleleft a_2 \triangleleft b_2$, then $b_2 <_r b_1$ and $a_1\triangleleft b_2$.
\item If $b_1\triangleleft a_1 \triangleleft b_2 \triangleleft a_2$, then $b_2 <_r b_1$ and $b_1\triangleleft a_2$.  
  \end{enumerate}
\end{claim}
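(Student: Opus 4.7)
The plan is to unpack the symbol $\triangleleft$ according to its definition and then transport inequalities across slices using the coordinate-wise monotonicity of $f$ in the last coordinate. Recall that for $a \in B^r$ and $b \in B_r$, $a\triangleleft b$ means $f(a\oplus_r b)<l$ while $b\triangleleft a$ means $f(a\oplus_r b)\geq l$, and recall that after the quotienting step the pre-order $\leq_r$ on $B_r$ is a linear order with $b\leq_r b'$ iff $f(a\oplus_r b)\leq f(a\oplus_r b')$ for some (equivalently, every) $a\in B^r$.

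For part (1), I would first translate the hypothesis $a_1\triangleleft b_1\triangleleft a_2\triangleleft b_2$ into the three inequalities $f(a_1\oplus_r b_1)<l$, $f(a_2\oplus_r b_1)\geq l$, and $f(a_2\oplus_r b_2)<l$. Fixing the slice $a_2$, the last two give $f(a_2\oplus_r b_2)<l\leq f(a_2\oplus_r b_1)$, so by the definition of $\leq_r$ on the $a_2$-slice and the fact that $\leq_r$ is linear, we obtain $b_2<_r b_1$. Now transport this to the $a_1$-slice: $b_2\leq_r b_1$ forces $f(a_1\oplus_r b_2)\leq f(a_1\oplus_r b_1)<l$, whence $a_1\triangleleft b_2$, completing the first case.

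Part (2) is proved by the exact symmetric computation. Translating $b_1\triangleleft a_1\triangleleft b_2\triangleleft a_2$ yields $f(a_1\oplus_r b_1)\geq l$, $f(a_1\oplus_r b_2)<l$, $f(a_2\oplus_r b_2)\geq l$. Reading off the $a_1$-slice gives $f(a_1\oplus_r b_2)<l\leq f(a_1\oplus_r b_1)$, so $b_2<_r b_1$. Transporting this comparison to the $a_2$-slice via coordinate-wise monotonicity, $f(a_2\oplus_r b_1)\geq f(a_2\oplus_r b_2)\geq l$, so $b_1\triangleleft a_2$.

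I do not expect any real obstacle here: the only point requiring care is remembering that $\leq_r$ is well-defined across slices (so that an inequality between $f$-values observed on one $a$-slice gives the same $\leq_r$-comparison usable on any other $a$-slice), which is precisely what coordinate-wise monotonicity in the $r$-th coordinate guarantees. The two parts of the claim are essentially mirror images, and both reduce to a one-line monotonicity transfer after the hypotheses are rewritten in terms of $f$ and $l$.
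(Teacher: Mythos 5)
Your proof is correct and follows essentially the same route as the paper: unpack $\triangleleft$ into inequalities on $f$-values, read off $b_2 <_r b_1$ from the slice where both comparisons with $l$ are available, and then transfer that comparison to the other slice via the coordinate-wise monotonicity defining $\leq_r$. The paper's own proof is exactly this for part (1) and declares part (2) "similar," which you have spelled out correctly.
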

\begin{proof}
$(1)$.   We have $f(a_2 \oplus_r b_1) \geq  l$ and $f(a_2 \oplus_{r} b_2) < l$, hence $b_2 <_r
  b_1$.

Since $f(a_1 \oplus_r b_1)<l$ and $b_2 <_r
  b_1$ we also have $f(a_1 \oplus_r b_2)<l$.

  $(2)$ is similar.
\end{proof}

Let $\triangleleft^t$ be the transitive closure of $\triangleleft$. 
It follows from the above claim that $\triangleleft^t=\triangleleft\cup\triangleleft{\circ}\triangleleft$.
More explicitly, for $b_1,b_2 \in B_r$, $b_1 \triangleleft^t b_2$ if $b_2 <_r b_1$, and for $a_1,a_2 \in B^r$, $a_1 \triangleleft^t a_2$ if $f(a_1 \oplus b) < l < f(a_2 \oplus b)$ for some $b \in B_r$.
It is not hard to see then that $\triangleleft^t$ is
anti-symmetric, hence it is a strict partial order on $T$.

\begin{claim} The union $<^r \cup \triangleleft^t$ is a strict partial order
  on $T$.
  \end{claim}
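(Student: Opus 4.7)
The plan is to verify the two defining axioms of a strict partial order on $T = B^r \dot\cup B_r$ for the relation $\prec \, := \, {<^r} \cup {\triangleleft^t}$: irreflexivity and transitivity (which together force antisymmetry, since $x \prec y \prec x$ would give $x \prec x$). Irreflexivity is immediate: $<^r$ is a strict partial order on $B^r$, and $\triangleleft^t$ was just observed to be a strict partial order on all of $T$, so neither contains any loop.

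The substantive part is transitivity. Suppose $x \prec y \prec z$ in $T$. I would split into four cases according to which of $<^r$, $\triangleleft^t$ witnesses each of the two steps. The pure cases are immediate: if both steps are $<^r$, then $x,y,z \in B^r$ and we use transitivity of $<^r$; if both steps are $\triangleleft^t$, we use transitivity of $\triangleleft^t$. For the mixed cases I would use the explicit description $\triangleleft^t = \triangleleft \cup (\triangleleft \circ \triangleleft)$ furnished by Claim~\ref{claim:triangle}, together with the basic monotonicity consequence of coordinate-wise monotonicity of $f$: whenever $x <^r y$ in $B^r$, swapping coordinates one at a time shows $f(x \oplus_r b) \leq f(y \oplus_r b)$ for every $b \in B_r$.

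For the case $x <^r y \triangleleft^t z$, I would branch on whether $z \in B_r$ or $z \in B^r$. If $z \in B_r$, then by the characterization of $\triangleleft^t$ the relation $y \triangleleft^t z$ collapses to $y \triangleleft z$, i.e.\ $f(y \oplus_r z) < l$; combined with $f(x \oplus_r z) \leq f(y \oplus_r z)$ this yields $x \triangleleft z$, hence $x \triangleleft^t z$. If $z \in B^r$, then $y \triangleleft^t z$ supplies some $b \in B_r$ with $f(y \oplus_r b) < l < f(z \oplus_r b)$, and monotonicity gives $f(x \oplus_r b) \leq f(y \oplus_r b) < l < f(z \oplus_r b)$, again witnessing $x \triangleleft^t z$. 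The remaining case $x \triangleleft^t y <^r z$ is entirely symmetric (branch on whether $x \in B_r$, in which case $x \triangleleft^t y$ reduces to $x \triangleleft y$ and monotonicity in $z$ gives $f(z \oplus_r x) \geq f(y \oplus_r x) \geq l$; or $x \in B^r$, handled by the same two-sided inequality as above).

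The main conceptual obstacle is exactly these mixed transitivity cases, which is where the coordinate-wise monotonicity hypothesis on $f$ is indispensable: it is what lets us push a $<^r$-inequality on $B^r$ through the function $f$ to compare the values $f(x \oplus_r b)$ and $f(y \oplus_r b)$, and thereby translate between the combinatorial order $<^r$ and the $\triangleleft$-relation defined through the threshold $l$. Once this translation is in place, all four cases close uniformly and the claim follows.
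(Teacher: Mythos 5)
Your proof is correct and takes essentially the same route as the paper: reduce transitivity of the union to the mixed cases, use coordinate-wise monotonicity (one coordinate at a time) to get $f(x \oplus_r b) \leq f(y \oplus_r b)$ from $x <^r y$, and exploit the description $\triangleleft^t = \triangleleft \cup (\triangleleft \circ \triangleleft)$. The only cosmetic difference is that you deduce antisymmetry from irreflexivity plus transitivity of the union, whereas the paper verifies the case $a_1 <^r a_2$, $a_2 \triangleleft^t a_1$ directly; both are valid.
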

  \begin{proof}
    We first show transitivity. Note that $<^r$ and $\triangleleft^t$ are both transitive, so it suffices to show for $x, y, z \in T$ that if either $x <^r y \triangleleft^t z$ or $x \triangleleft^t y <^r z$, then $x \triangleleft^t z$. Furthermore, since $\triangleleft^t=\triangleleft\cup\triangleleft{\circ}\triangleleft$, we may restrict our attention to the following cases.    If $a_1 <^r a_2\triangleleft b$ with  $a_1,a_2\in B^r$
    and $b\in B_r$, then $f(a_1 \oplus_r b)<f(a_2 \oplus_r b)<l$, and so
    $a_1\triangleleft b$.
    If $b\triangleleft a_1 <^r a_2$ with $a_1,a_2\in B^r$
    and $b\in B_r$, then $f(a_2 \oplus_r
    b)>f(a_1 \oplus_r b)\geq l$, and so $b\triangleleft a_2$. 

    To check anti-symmetry, assume
    $a_1 <^r a_2$ and $a_2 \triangleleft^t a_1$. Since $a_1,a_2\in
    B^r$ we have $a_2\triangleleft b \triangleleft a_1$ for some $b\in
    B_r$.  We have $f(a_1 \oplus_r b)\geq l > f(a_2 \oplus_r b)$, contradicting
    $a_1<^r a_2$.
  \end{proof}

Finally, let $\prec$ be an arbitrary linear order on $T=B^r\dot\cup B_r$ extending $<^r \cup
\triangleleft^t$.
Since $\prec$ extends $\triangleleft$, for $a\in B^r$
and $b\in B_r$ we have $(a,b)\in X$ if and only if $a\prec
b$.

We take $f^r\colon B^r\to T$ and $f_r\colon B_r\to T$ to be the identity
maps.  Since $\prec$ extends $<^r$, the map $f^r$ is  coordinate-wise monotone. 
\end{proof}

\subsection{Main theorem}

\begin{defn} Let $B=B_1\tdt B_r$ be an $r$-grid.
  \begin{enumerate}
\item Given $s \in \mathbb{N}$, we say that a set $X\subseteq B$ has  \emph{grid-complexity $s$} (in $B$) if $X$ is the intersection of $B$ with at most $s$ basic subsets of $B$. 

\noindent We say that $X$ has \emph{finite grid-complexity} if it has grid-complexity $s$ for some $s \in \mathbb{N}$.
\item For integers $k_1,\dotsc k_r$ we say that $X\subseteq
  B$ is \emph{$K_{k_1,\dotsc,k_r}$-free} is $X$ does not contain a
  sub-grid $C_1\times\dotsb \times C_r\subseteq S$ with $|C_i|=k_i$. 
\end{enumerate}
\end{defn}

In particular, $B$ itself is the only subset of $B$ of grid-complexity $0$.

\begin{sample}\label{ex: compl of semilin sets}

	Suppose that $V$ is an ordered vector space over an ordered division ring, $d = d_1 + \ldots + d_r \in \mathbb{N}$ and 
	\[
X = \left\{ \bar{x} \in V^{d}:f_{1}\left(\bar{x}\right) \leq 0, \ldots, f_{p}\left(\bar{x}\right) \leq 0,f_{p+1}\left(\bar{x}\right)<0,\ldots,f_{s}\left(\bar{x}\right)<0\right\} \mbox{,}
\]
for some linear functions $f_i: V^d \to V, i \in [s]$. Then each $f_i$ is coordinate-wise monotone (Example \ref{ex: lin f impl mon}), hence each of the sets 
$$\left\{\bar{x} \in V^d : f_i(\bar{x}) <0 \right\}, \left\{\bar{x} \in V^d : f_i(\bar{x}) \leq 0 \right\}$$
is a basic  subset of the grid $V^{d_1} \times \ldots \times V^{d_r}$ (the latter by Remark \ref{rem : leq is basic}), and $X \subseteq V^{d_1} \times \ldots \times V^{d_r}$ as an intersection of these $s$ basic sets has grid-complexity $s$.
\end{sample}

\begin{rem} 
\begin{enumerate}
	\item Let $B$ be an $r$-grid and $A\subseteq B$ a subset of $B$ of grid-complexity $s$. If $C \subseteq B$ is
  a sub-grid containing $A$, then $A$ is also a subset of $C$
  of grid-complexity $s$.
  \item  In particular, if $A\subseteq B$ is a subset of grid-complexity
  $s$, then $A$ is a subset of grid-complexity $ s$ of the grid
  $A_1\tdt A_r$, where $A_i :=\pi_i(A)$ is the projection of $A$ on $B_i$
  (it is the smallest sub-grid of $B$ containing $A$).
\end{enumerate} 
\end{rem}

\begin{defn}
	Let $B=B_1\tdt B_r$ be a finite $r$-grid and 
$n_i :=|B_i|$.  For $j\in \{0,\dotsc r\}$, we will denote by $\delta_j^r(B)$
the integer
\[  \delta_j^r(B) := \sum_{ i_1<i_2<\dotsb< i_j \in  [r]} n_{i_1} \cdot n_{i_2} \cdot \ldots  \cdot n_{i_j}.\]  
\end{defn}
\begin{sample}
We have $\delta^r_0(B)=1$,  $\delta^r_1(B)=n_1+\dotsb+n_r$,
$\delta_r^r(B)=n_1n_2\dotsb n_r$.	
\end{sample}

We can now state the main theorem.
\begin{thm}
  \label{thm:main}
For every integers $r\geq 2,  s\geq 0, k\geq 2$  there are
$\alpha=\alpha(r,s,k)\in \RR$ and $\beta=\beta(r,s)\in \NN$ such
that: for any finite $r$-grid $B$ and  $K_{k,\dotsc,k}$-free
 subset  $A \subseteq B$ of grid-complexity $s$ we have
\[ |A| \leq \alpha \delta^r_{r-1}(B) \log^\beta \left( \delta^r_{r-1}(B)+1 \right). \] 

\noindent Moreover, we can take $\beta(r,s) := s(2^{r-1}-1)$.
\end{thm}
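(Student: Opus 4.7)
The plan is to prove the bound by double induction, outer on the grid-complexity $s$ and inner on the arity $r$. The base case $s = 0$ is immediate: the only grid-complexity-$0$ subset is $A = B$ itself, which is $K_{k,\dotsc,k}$-free iff some $|B_i| < k$, yielding $|A| \leq (k-1)\,\delta^r_{r-1}(B)$ with $\beta(r,0) = 0$.

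For the inductive step in $s$, write $A = X_1 \cap \dotsb \cap X_s$ with each $X_i$ basic, and apply Proposition \ref{prop:coord} to the last basic set $X_s$ at a well-chosen coordinate $i^*$, writing $X_s = \{b^{i^*} \oplus_{i^*} b_{i^*} : f^{i^*}(b^{i^*}) < f_{i^*}(b_{i^*})\}$ with $f^{i^*}$ coordinate-wise monotone on $B^{i^*}$. Fixing the threshold $t$ to be the median of $f_{i^*}(B_{i^*})$ (so that $|B_{i^*}^{\mathrm{low}}|, |B_{i^*}^{\mathrm{high}}| \leq \lceil n_{i^*}/2\rceil$) and letting $B^{i^*}_{\mathrm{low}} := \{b^{i^*} : f^{i^*}(b^{i^*}) \leq t\}$, $B^{i^*}_{\mathrm{high}} := \{b^{i^*} : f^{i^*}(b^{i^*}) > t\}$ partitions $B$ into four cells: the \emph{violating} cell $B^{i^*}_{\mathrm{high}} \times B_{i^*}^{\mathrm{low}}$ lies outside $X_s$, so $A$ vanishes there; the \emph{automatic} cell $B^{i^*}_{\mathrm{low}} \times B_{i^*}^{\mathrm{high}}$ lies inside $X_s$, so on this cell $A$ is governed only by $X_1 \cap \dotsb \cap X_{s-1}$, a complexity-$(s-1)$ set; and the two \emph{diagonal} cells $B^{i^*}_{\mathrm{low}} \times B_{i^*}^{\mathrm{low}}$ and $B^{i^*}_{\mathrm{high}} \times B_{i^*}^{\mathrm{high}}$ retain $X_s$ but have $B_{i^*}$ halved. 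The automatic cell is controlled by the outer induction on $s$, contributing a $\log^{\beta(r,s-1)}$ factor; the two diagonal cells are handled by recursion in $n_{i^*}$.

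For $r = 2$, choosing $i^* \in \{1,2\}$ to be the coordinate with larger $n_i$ guarantees a uniform contraction $\log(n_1 + n_2/2) \leq \log(n_1 + n_2) - \log_2(4/3)$, and the expansion $(\log N - c)^{\beta} \leq \log^{\beta} N - c\beta \log^{\beta-1} N + O(\log^{\beta-2} N)$ absorbs the subleading $\log^{\beta(2, s-1)}$ contribution from the automatic cell; this closes the recursion with $\beta(2,s) = s = s(2^{1}-1)$. For $r \geq 3$ the situation is genuinely harder: halving one coordinate reduces $\delta^r_{r-1}(B)$ only by a factor of at least $(r+1)/(2r) > 1/2$ (e.g.\ $2/3$ in the balanced case $n_1 = \dotsb = n_r$), so the naive divide-and-conquer recursion $T(s,\delta) \leq 2T(s,c\delta) + \dotsb$ is not contracting enough by itself to produce a polylogarithmic bound. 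The remedy is to bring in the \emph{inner} induction on $r$: for each dyadic interval $I \subseteq B_{i^*}$ of size at least $k$, the common slice $A_I := \bigcap_{b_{i^*} \in I} A_{b_{i^*}}$ is a grid-complexity-$s$ subset of $B^{i^*}$ which is $K_{k,\dotsc,k}$-free in dimension $r-1$ (since any $k$-sided sub-grid in $A_I$ would, together with $I$, produce a $K_{k,\dotsc,k,k}$ in $A$), so the $(r-1)$-dimensional version of the theorem bounds $|A_I| \leq C\,\delta^{r-1}_{r-2}(B^{i^*})\log^{\beta(r-1,s)}$. A dyadic-telescoping summation over the intervals $I$, combined with the single log factor from the recursion depth in $n_{i^*}$, produces the recurrence $\beta(r,1) = 2\,\beta(r-1,1) + 1$ with $\beta(2,1) = 1$, whose solution is $\beta(r,1) = 2^{r-1} - 1$; summing over the $s$ basic sets gives $\beta(r,s) = s(2^{r-1}-1)$.

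The main obstacle will be the careful combinatorial bookkeeping: (i) choosing the splitting coordinate $i^*$ optimally so that $\delta^r_{r-1}(B)$ contracts by a uniform factor, (ii) justifying the dyadic telescoping over $B_{i^*}$ without losing more than the predicted log factor (the intervals $I$ of size $< k$ must be handled separately by a trivial bound), and (iii) verifying that the constants $\alpha(r,s,k)$ can be chosen to close the coupled recursion, which ultimately reduces to an inequality of the form $2\alpha(r,s,k)\log^{\beta(r,s)}(c\delta) + \alpha(r,s-1,k)\log^{\beta(r,s-1)}(\delta) + \alpha(r-1,1,k)\log^{\beta(r-1,1)}(\delta) \leq \alpha(r,s,k)\log^{\beta(r,s)}(\delta)$ holding for $\delta$ large enough relative to the chosen constants.
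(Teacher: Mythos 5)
Your skeleton (base case $s=0$, and the $r=2$ step via Proposition \ref{prop:coord} plus a threshold split and divide-and-conquer) is essentially the paper's argument, and your target numerology $\beta(r,s)=s(2^{r-1}-1)$ matches; but the induction step for $r\ge 3$, which is the heart of the proof, has a genuine gap. Once $r\ge 3$, the sets $B^{i^*}_{\mathrm{low}}=\{b^{i^*}: f^{i^*}(b^{i^*})\le t\}$ and $B^{i^*}_{\mathrm{high}}$ are \emph{not} sub-grids of the $(r-1)$-fold product $B^{i^*}$ (they are regions cut out by a coordinate-wise monotone inequality), so your four cells are not grids. In particular the ``automatic'' cell does not carry a grid-complexity-$(s-1)$ set: written as a subset of the grid $B$ it is $X_1\cap\dots\cap X_{s-1}\cap\{f^{i^*}\le t\}\cap\{f_{i^*}>t\}$, of grid-complexity $s+1$, so the peeling as you set it up does not reduce complexity and the outer induction on $s$ does not close. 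This is precisely what the paper's ``split grid-complexity'' machinery is for: it works with products of $f^r$-strips and $f_r$-strips, replaces $\delta^r_{r-1}$ by the mixed measure $\Delta(H)=|H^r|+\delta^{r-1}_{r-2}(H^r)\,|H_r|$, chooses the threshold $h$ to halve $\Delta$ (rather than the cardinality of one coordinate), and records that each peel costs two extra inequalities on the $(r-1)$-dimensional factor; only at $u=0$, where the set is a genuine product $A^r\times A_r$, is the $(r-1)$-dimensional theorem invoked. That bookkeeping gives $\beta(r,s)\le\beta(r-1,2s)+s$, whence $s(2^{r-1}-1)$.

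The substitute you propose, dyadic telescoping over common slices $A_I=\bigcap_{b\in I}A_b$, is not developed enough to fill this role. The claim that $A_I$ has grid-complexity $s$ is true but unproved (it needs the observation that by coordinate-wise monotonicity $\bigcap_{b\in I}(X_j)_b$ is a single extremal fiber of $X_j$); more seriously, the telescoping requires the fibers $N(a)\subseteq B_{i^*}$ to decompose into $O(\log n_{i^*})$ intervals of one fixed dyadic system on $B_{i^*}$, which holds for $s=1$ (each fiber is an up-set for the single pre-order induced on $B_{i^*}$) but fails for $s\ge 2$, where the $s$ basic sets induce $s$ different pre-orders on $B_{i^*}$; and ``summing over the $s$ basic sets'' is not a valid reduction, since the basic sets are intersected, not unioned. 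Finally, the recurrence $\beta(r,1)=2\beta(r-1,1)+1$ is asserted rather than derived --- nothing in the sketch produces the doubling, and it appears reverse-engineered from the target; if one actually carries out your slicing argument for $s=1$ it gives the cheaper $\beta(r,1)\le\beta(r-1,1)+1$, which is fine (indeed better) for a single basic set but leaves the case $s\ge 2$, i.e.\ the real content of the theorem, unproved. (A minor further point: in your $r=2$ step the median split of the larger coordinate with binomial absorption can be made to work, but the paper's choice of $h$ halving $|B_1^{<h}|+|B_2^{<h}|$ directly is what makes the clean recurrence of Fact \ref{fact:fact} applicable.)
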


\begin{rem}
Inspecting the proof, it can be verified that the dependence of $\alpha$ on $k$ is at most linear.
\end{rem}

\begin{rem}
  We use $\log^\beta \left( \delta^r_{r-1}(B)+1 \right)$ instead of $\log^\beta \left(
  \delta^r_{r-1}(B) \right)$  to include the case $\delta^r_{r-1}(B)\leq 1$.
\end{rem}

\begin{rem}\label{rem: fin union constant}
	If in Theorem \ref{thm:main} $A$ is only assumed to be a union of at most $t$ sets of grid-complexity $s$, then the same bound holds with $\alpha' := t \cdot \alpha$ (if $A = \bigcup_{i \in [t]} A_i$ is $K_{k,\dotsc,k}$-free, then each $A_i$ is also $K_{k,\dotsc,k}$-free, so we can apply Theorem \ref{thm:main} to each of the $A_i$'s and bound $|A|$ by the sum of their bounds).
\end{rem}

\begin{defn}
  \label{defn:delta}
Let $B=B_1\tdt B_r$ be a grid. We
extend the definition of $\delta^r_j$ to arbitrary finite subsets of $B$ as
follows. Let $A\subseteq B$ be a finite subset, and let $A_i :=\pi_i(A)$,
$i\in[r]$, be the projections of $A$. We define
$\delta^r_j(A) :=\delta^r_j(A_1\tdt A_r)$.
\end{defn}

If $B$ is a finite $r$-grid and $A\subseteq B$, then obviously
$\delta^r_j(A)\leq  \delta^r_j(B)$. Thus Theorem~\ref{thm:main} is
equivalent to the following.

\begin{prop}\label{thm:main2}
  For every integers $r\geq 2,  s\geq 0, k\geq 2$  there are
$\alpha=\alpha(r,s,k)\in \RR$ and $\beta=s(2^{r-1}-1) \in \NN$ such
that for any $r$-grid $B$ and  $K_{k,\dotsc,k}$-free finite subset  $A \subseteq B$ of grid-complexity $\leq s$ we have
\[ |A| \leq \alpha \delta^r_{r-1}(A) \log^\beta(\delta^r_{r-1}(A)+1). \] 
\end{prop}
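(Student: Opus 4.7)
The plan is to prove Proposition~\ref{thm:main2} by induction on $r$ (with base case $r = 2$) and a secondary induction on $s$ within each rank. The crucial structural input is Proposition~\ref{prop:coord}, which rewrites each basic constraint defining $A$ in the decoupled form $f^r(b^r) < f_r(b_r)$, separating the last coordinate $B_r$ from the rest $B^r = B_1 \tdt B_{r-1}$ and setting up a natural divide-and-conquer in $B_r$.

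For the base case $r = 2$, I would induct on $s$. The case $s = 0$ is immediate, since $K_{k,k}$-freeness of $A = B_1 \times B_2$ forces $\min(n_1, n_2) < k$ and hence $|A| \leq (k-1)(n_1 + n_2)$. For $s \geq 1$, I would fix one constraint in split form $f^1(b_1) < f^2(b_2)$, take $m$ to be the median of the combined value set $f^1(B_1) \cup f^2(B_2)$, and partition $B_i = B_i^L \cup B_i^H$ by the threshold $m$. On the auto-in quadrant $B_1^L \times B_2^H$ the chosen constraint is automatically satisfied, so $A$ restricts there to a set of grid-complexity $s - 1$ to which the inductive hypothesis applies; the auto-out quadrant $B_1^H \times B_2^L$ is disjoint from $A$; and the two remaining quadrants $B_1^L \times B_2^L$, $B_1^H \times B_2^H$ each have at most half the total size with unchanged grid-complexity, feeding into the recursion $T(n) \leq 2 T(n/2) + O(n \log^{s-1} n)$ whose solution $T(n) = O(n \log^s n)$ matches $\beta(2, s) = s$.

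For the inductive step $r \geq 3$, assuming the proposition in all lower ranks, I would again induct on $s$. Fix one constraint in split form $f^r_1(b^r) < f_{r,1}(b_r)$, split $B_r = B_r^L \cup B_r^H$ by the median of $f_{r,1}$, and set $B^r_L = \{b^r \in B^r : f^r_1(b^r) \leq m\}$, $B^r_H = B^r \setminus B^r_L$. Just as in rank two, on $B^r_L \times B_r^H$ the first constraint is vacuous (only $s - 1$ constraints survive), on $B^r_H \times B_r^L$ the set $A$ is empty, and the two regions $B^r_L \times B_r^L$, $B^r_H \times B_r^H$ feed into a standard recursion on $|B_r|$.

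The main technical obstacle is the auto-in region $A \cap (B^r_L \times B_r^H)$: since $B^r_L$ is a basic (non-product) subset of the $(r-1)$-grid $B^r$, the inductive hypothesis does not apply directly to it as a sub-grid problem. My plan is to enlarge to the enclosing sub-grid $\pi_1(B^r_L) \tdt \pi_{r-1}(B^r_L) \times B_r^H$, treat the constraint $b^r \in B^r_L$ as an additional single-sided basic constraint, and exploit that when $|B_r^H| \geq k$, the $K_{k, \dotsc, k}$-freeness of $A$ forces $B^r_L$ to be $K_{k, \dotsc, k}$-free in the $(r-1)$-grid sense, so that the $(r-1)$-case of the proposition controls $|B^r_L|$ with a factor of $\log^{\beta(r-1, \cdot)}$. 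Iterating this reasoning across all $s$ constraints, combining with the divide-and-conquer depth $\log|B_r|$, and carefully bookkeeping the extra logs accumulated through nested invocations of the $(r-1)$-case should yield the claimed exponent $\beta(r, s) = s(2^{r-1} - 1)$.
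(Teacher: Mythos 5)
Your base case $r=2$ coincides with the paper's argument (Proposition \ref{prop:coord}, a balanced threshold on the combined value set, an auto-in quadrant of complexity $s-1$, an empty auto-out quadrant, and two diagonal quadrants of half the size), and your overall architecture for $r\geq 3$ is also the paper's. But the induction step $r\geq 3$ has a genuine gap exactly at the point you flag. Your proposed fix --- that when $|B_r^H|\geq k$ the $K_{k,\dotsc,k}$-freeness of $A$ forces the sublevel set $B^r_L=\{b^r: f^r_1(b^r)\leq m\}$ to be $K_{k,\dotsc,k}$-free in the $(r-1)$-grid, so that the $(r-1)$-dimensional case controls $|B^r_L|$ --- is false once $s\geq 2$: on the auto-in region membership in $A$ is still governed by the remaining $s-1$ constraints, so $A\cap(B^r_L\times B_r^H)$ can be very sparse (even empty) while $B^r_L$ is the entire $(r-1)$-grid; no $K$-freeness of the strip itself can be inferred, and in any case bounding $|B^r_L|$ alone would not bound the number of $A$-points in that region, which is what the recursion needs. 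The alternative you mention, enlarging to the enclosing sub-grid and treating $b^r\in B^r_L$ as one more basic constraint, returns you to grid-complexity $(s-1)+1=s$, so the induction on $s$ does not close. The ``iterate across all $s$ constraints and bookkeep the logs'' step is therefore not available as stated.

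The paper's resolution is a refined complexity parameter rather than a $K$-freeness claim: it introduces the \emph{split grid-complexity} $(t,u)$, writing $A=(A^r\times A_r)\cap\bigcap_{i\in[u]}X_i$ with $A^r\subseteq B^r$ of grid-complexity $t$, and inducts on the number $u$ of genuinely coupling constraints while allowing $t$ to grow. Strip conditions produced by the splitting are absorbed into the $(r-1)$-dimensional part (each split costs $t\mapsto t+2$, $u\mapsto u-1$, via the $f^r$- and $f_r$-strips of Remark \ref{rem: props of f-strips}), and only in the base case $u=0$, where $A=A^r\times A_r$ is an honest product, does one invoke the dichotomy ``either $A^r$ is $K_{k,\dotsc,k}$-free or $|A_r|<k$'' together with the $(r-1)$-dimensional inductive hypothesis at complexity $t$. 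Moreover, the divide-and-conquer is run not on $|B_r|$ but on the weight $\Delta(H)=|H^r|+\delta^{r-1}_{r-2}(H^r)|H_r|$ of $f$-grids, with the threshold $h$ chosen from the combined value set $f^r(H^r)\cup f_r(H_r)$ (Claim \ref{claim:h}) so that both diagonal pieces have weight at most $m/2$; a median of $f_{r,1}$ over $B_r$ alone does not halve the relevant measure. This bookkeeping is what produces the recursions $\beta'(r,t,u)\leq\beta'(r,t+2,u-1)+1$ and $\beta'(r,t,0)\leq\beta(r-1,t)$, hence $\beta(r,s)\leq\beta(r-1,2s)+s$ and the stated exponent $s(2^{r-1}-1)$; without some device of this kind your outline cannot be completed.
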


\begin{defn}

For $r\geq 1,  s\geq 0, k\geq 2$ and $n\in \NN$, let $F_{r,k}(s,n)$ be
the maximal size of a $K_{k,\dotsc,k}$-free subset $A$
of grid-complexity $s$ of some $r$-grid $B$  with $\delta_{r-1}^r(B)\leq n$.
\end{defn}

Then Proposition \ref{thm:main2} can be restated as follows.
\begin{prop}\label{thm:main1}
  For every integers $r\geq 2,  s\geq 0, k\geq 2$  there are
$\alpha=\alpha(r,s,k)\in \RR$ and $\beta=\beta(r,s)\in \NN$ such
that
\[ F_{r,k}(s,n) \leq \alpha n \log^\beta( n+1).\]
\end{prop}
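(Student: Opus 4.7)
The plan is to prove the bound by a double induction, with the outer induction on the arity $r$ (base case $r = 2$) and the inner induction on the grid-complexity $s$. The main technical tool is Proposition \ref{prop:coord}, which lets me rewrite each basic set in the canonical form $\{ b^r \oplus_r b_r : f^r(b^r) < f_r(b_r) \}$, comparing a coordinate-wise monotone function on $B^r$ against a function on $B_r$; this reduces each basic set to an ``interval-type'' relation on the $2$-grid $B^r \times B_r$, which I would then handle by a balanced binary-tree decomposition.

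For the base case $r = 2$, I would induct on $s$. When $s = 0$, the set $A = B$ is a $K_{k,k}$-free $2$-grid, so one of $|B_1|, |B_2|$ is $< k$ and $|A| < kn = k \delta^2_1(B)$. For $s \geq 1$, apply Proposition \ref{prop:coord} to the last basic set, writing it as $X_s = \{(b_1, b_2) : g(b_1) < h(b_2)\}$. Build a balanced binary tree on the merged sorted list $g(B_1) \cup h(B_2)$; every pair in $A \cap X_s$ is uniquely assigned to the tree node $v$ at which its two values first separate into the left and right children. On the corresponding cross region $L_v \times R_v$ the condition $X_s$ is automatic, so $A$ restricted there has grid-complexity $\leq s-1$ and remains $K_{k,k}$-free. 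Since at each of the $O(\log n)$ tree depths the cross regions are disjoint with total support of size $\leq n$, summing the inductive bound $F_{2,k}(s-1, \cdot)$ across the tree (and using concavity of $x \mapsto x \log^{s-1}(x+1)$) yields $F_{2,k}(s, n) = O(n \log^s(n+1))$, consistent with $\beta(2, s) = s$.

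For the inductive step $r - 1 \Rightarrow r$ with $r \geq 3$, I would again induct on $s$; the base $s = 0$ gives $|A| \leq k \delta^r_{r-1}(B)$ as above. For $s \geq 1$, apply Proposition \ref{prop:coord} to split off the $r$-th coordinate in the last basic set, writing $X_s = \{ (b^r, b_r) : f^r(b^r) < f_r(b_r) \}$ with $f^r$ coordinate-wise monotone on the $(r-1)$-grid $B^r$, and perform the same binary-tree decomposition on $f^r(B^r) \cup f_r(B_r)$. The new subtlety compared to $r = 2$ is that each cross region $L_v \subseteq B^r$ is typically not a sub-grid of $B^r$; to cope with this, pass to the sub-grid $\pi_1(L_v) \times \cdots \times \pi_{r-1}(L_v)$ generated by $L_v$, apply the inner induction on $s$ there (at the cost of one $\log(n+1)$ factor), and invoke the outer induction on $r - 1$ applied to the resulting $(r-1)$-dimensional subproblem, essentially once on each half of the binary split. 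Bookkeeping $\delta^r_{r-1}$ summed across the tree levels should then yield the recurrence $\beta(r, s) = 2 \beta(r-1, s) + s$, which with $\beta(2, s) = s$ solves to $\beta(r, s) = s(2^{r-1} - 1)$.

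The main obstacle will be the non-sub-grid shape of the cross regions $L_v \subseteq B^r$ in the $r \geq 3$ step: naively enlarging $L_v$ to the generated sub-grid $\pi_1(L_v) \times \cdots \times \pi_{r-1}(L_v)$ can inflate $\delta^r_{r-1}$ far beyond what a single application of the inductive hypothesis can absorb. Compensating requires using the $K_{k, \ldots, k}$-free condition to argue that the extra points in the enlarged sub-grid cannot themselves contribute additional edges of $A$ beyond those already accounted for by the $(r-1)$-grid induction; it is precisely this ``inflate-and-correct'' step, applied on both halves of each binary split, that forces the doubling of $\beta$ per step of $r$ and produces the exponent $s(2^{r-1} - 1)$ rather than the naive $s$.
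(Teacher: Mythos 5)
Your overall architecture (double induction on $r$ and $s$, Proposition \ref{prop:coord} to put the last basic set in the form $f^r(b^r)<f_r(b_r)$, and a balanced median/tree split of the merged value list) matches the paper's, and your $r=2$ case is correct: the explicit binary tree is just the unrolled form of the paper's recurrence $F_{2,k}(s,n)\leq 2F_{2,k}(s,\lfloor n/2\rfloor)+2F_{2,k}(s-1,n)$. The gap is in the step $r\geq 3$, precisely at the obstacle you flag, and your proposed remedy does not work. If you enlarge a cross region $L_v\subseteq B^r$ (a sublevel set of the coordinate-wise monotone $f^r$) to its generated sub-grid $\hat{L}_v=\pi_1(L_v)\times\cdots\times\pi_{r-1}(L_v)$, the quantity you must feed into the inductive hypothesis includes $\delta^{r-1}_{r-1}(\hat{L}_v)=\prod_{i<r}|\pi_i(L_v)|$, which is not controlled by $|L_v|$: already for $r=3$ and $f^r(x_1,x_2)=g_1(x_1)+g_2(x_2)$, a ``diagonal band'' of cardinality $O(N)$ in an $N\times N$ grid can project onto essentially all of both factors, so each of the $\sim N$ disjoint strips at one tree level contributes $N^2$ and the level sum is $N^3$ against a target of $O(N^2)$. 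Your suggestion to use $K_{k,\ldots,k}$-freeness to ``discount the extra points'' cannot repair this: the extra points of $\hat{L}_v$ are not in $A$ at all, so there are no extra edges to discount --- the loss occurs purely in the size parameter of the ambient grid, which is what the inductive bound is stated in terms of.

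The paper's fix is structurally different and is the missing idea: it never passes to the generated sub-grid. Instead it keeps each strip as a subset of the full $(r-1)$-grid $B^r$, of grid-complexity $t+2$ (two extra basic inequalities cut out the strip), introduces the ``split grid-complexity'' $(t,u)$ for sets of the form $(A^r\times A_r)\cap\bigcap_{i\in[u]}X_i$, and measures an $f$-grid $H=H^r\times H_r$ by the hybrid quantity $\Delta(H)=|H^r|+\delta^{r-1}_{r-2}(H^r)|H_r|$, in which the top-dimensional product term --- exactly the term that fails to be subadditive over strips --- is replaced by the honest cardinality $|H^r|$, which is subadditive under the median split. The inner induction is then on $u$ rather than directly on $s$: each split costs one logarithm and converts $(t,u)$ into $(t+2,u-1)$, and the $(r-1)$-dimensional hypothesis is invoked only once, at $u=0$, at complexity $2s$. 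This yields $\beta(r,s)\leq\beta(r-1,2s)+s$; your recurrence $\beta(r,s)=2\beta(r-1,s)+s$ happens to produce the same value $s(2^{r-1}-1)$ only because $\beta$ is linear in $s$, but it does not arise from a valid argument as written.
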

\begin{rem}\label{rem:zero}
  Notice that $F_{r,k}(s,0)=0$.
\end{rem}

In the rest of the section we prove Proposition~\ref{thm:main1} by induction on $r$, where for each $r$ it is proved by induction on $s$. We will use the following simple recurrence bound.
\begin{fact}\label{fact:fact}
  Let $\mu\colon \NN \to \NN$ be a function satisfying $\mu(0)=0$ and
  $\mu(n)\leq 2\mu(\lfloor
n/2\rfloor) + \alpha n \log^\beta(n+1))$ for some $\alpha\in \RR$ and
$\beta\in \NN$. Then $\mu(n)\leq \alpha' n  \log ^{\beta+1} (n+1)$ for
some $\alpha'=\alpha'(\alpha,\beta)\in \RR$. 
\end{fact}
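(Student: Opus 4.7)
My plan is to treat this as a routine divide-and-conquer recurrence and solve it by unrolling, which is the cleanest route for the constants.

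First I would iterate the given inequality. Setting $n_0 := n$ and $n_{j+1} := \lfloor n_j / 2 \rfloor$, a $k$-fold application gives
\[ \mu(n) \leq 2^k \mu(n_k) + \alpha \sum_{j=0}^{k-1} 2^j n_j \log^\beta(n_j + 1). \]
Since $n_j \leq n/2^j$, each summand $2^j n_j \log^\beta(n_j+1)$ is at most $n \log^\beta(n+1)$. Choosing $k := \lceil \log_2(n+1) \rceil + 1$ forces $n_k = 0$, so the initial term vanishes by $\mu(0)=0$, and the sum contains at most $k = O(\log(n+1))$ terms. Hence
\[ \mu(n) \leq \alpha k \cdot n \log^\beta(n+1) \leq \alpha' n \log^{\beta+1}(n+1) \]
for an appropriate $\alpha' = \alpha'(\alpha,\beta)$ (one picks $\alpha'$ large enough to absorb the additive $+1$ in $k$ and to handle the small values $n \leq 1$ where $\log(n+1)$ is tiny or zero).

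The only mildly delicate point is the bookkeeping with the floor function: one needs $n_j \leq n/2^j$ (immediate by induction), and one must verify that the iteration reaches $0$ in $O(\log n)$ steps, which is immediate from $n_{j+1} \leq n_j/2$ combined with the fact that $\lfloor 1/2 \rfloor = 0$. Everything else is bounding a geometric-like sum by $k$ copies of its largest term, which is a one-line estimate. I expect no real obstacle; the whole argument is a clean instance of the standard master-theorem unrolling, and the only ``content'' is the extra factor of $\log(n+1)$ that comes from the depth of recursion.
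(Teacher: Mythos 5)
Your unrolling argument is correct: the iteration $\mu(n)\le 2^k\mu(n_k)+\alpha\sum_{j<k}2^jn_j\log^\beta(n_j+1)$ with $2^jn_j\le n$ and $k=O(\log(n+1))$ steps to reach $n_k=0$ gives exactly the claimed bound, and your remarks on the floor bookkeeping and on small $n$ (where $\mu(0)=0$ handles the case $\log(n+1)=0$) cover the only delicate points. The paper states this Fact without proof, treating it as a standard recurrence estimate; your proof is precisely the standard argument the authors intend, so there is nothing to reconcile.
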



\subsection{The base  case $r=2$}
\label{sec:case-r=2}

Let $B=B_1{\times} B_2$ be a finite grid and $A\subseteq B$ a subset of grid-complexity $s$. We will proceed by
induction on $s$.

If $s=0$ then $A=B_1\times
B_2$. If $A$ is $K_{k,k}$-free then one of the sets $B_1, B_2$ must have size
at most $k$. Hence $|A| \leq k(|B_1|+|B_2|)=k\delta^2_1(B)$.

Thus
\[ F_{2,k}(0,n) \leq k n.\]

\begin{rem}\label{rem: s=0}
	The same argument shows that $F_{r,k}(0,n) \leq k n$ for all $r \geq 2$.
\end{rem}

Assume now that the theorem is proved for $r=2$ and all $s'<s$.
Let $n_1 :=|B_1|$, $n_2 :=|B_2|$ and
$n :=\delta^2_1(B)=n_1+n_2$. 

We choose basic sets $X_1,\dotsc X_s \subseteq B$ such
that $A=B \cap \bigcap_{j \in [s]} X_j$.

By Proposition \ref{prop:coord}, we can choose a finite linear order $(S,<)$ and functions  $f_1\colon B_1\to S$ and $f_2\colon B_2\to S$
so that 
\[  X_s=\left\{ (x_1,x_2) \in B_1\times B_2 \colon 
  f_1(x_1)< f_2(x_2)\right\}.\]

For $l\in S$,  $i\in\{1,2\}$ and $\square \in \{ <,=,>, \leq, \geq\}$, let

\[   B_i^{\square l} = \left\{ b\in B_i \colon f_i(b) \square  l \right\}. \]

We choose $h\in S$ such that
\[ |B_1^{<h}|+|B_2^{<h}| \leq n/2 \text{ and } |B_1^{>h}|+|B_2^{>h}| \leq n/2. \]
For example we can take $h$ to be the minimal element in $f_1(B_1)\cup
f_2(B_2)$ with $ |B_1^{\leq h}|+|B_2^{ \leq h}| \geq n/2$.
Then
\begin{gather*} X_s =
  \left[(B_1^{<h}\times B_2^{<h}) \cap X_s \right]
  \cup
  \left[ (B_1^{>h}\times B_2^{>h} ) \cap X_s \right] \\
\cup 
(B_1^{<h}\times B_2^{\geq h} )
\cup
(B_1^{=h}\times B_2^{>h}).
\end{gather*}

Hence we conclude 
$$F_{2,k}(s,n)\leq 2 F_{2,k}(s,\lfloor
n/2\rfloor)+2F_{2,k}(s-1,n).$$

Applying induction hypothesis on $s$, and using Fact~\ref{fact:fact}
and Remark~\ref{rem:zero} we obtain  $F_{2,k}(s,n)\leq \alpha n (\log
n)^\beta$ for some $\alpha=\alpha(s,k)\in \RR$ and $\beta=\beta(s)\in \NN$.

This finishes the base case $r=2$.

\subsection{Induction step}
\label{sec:induction-step-1}
We fix $r \in \mathbb{N}_{\geq 3}$ and assume that Proposition~\ref{thm:main1} holds for all pairs $(r',s)$ with $r'<r$ and $s \in \mathbb{N}$.

\begin{defn}
	Let $B=B_1\tdt B_r$ be a finite $r$-grid.
	\begin{enumerate}
		\item For integers $t, u\in \NN$, we say that a subset $A\subseteq B$ is \emph{of split grid-complexity $(t, u)$} if there are basic sets $X_1,\dotsc, X_{u}
\subseteq B$, a subset $A^r\subseteq B_1\tdt B_{r-1}$ of
grid-complexity $t$, and a subset $A_r\subseteq B_{r}$   such that $A=(A^r\times
A_r)\cap \bigcap_{i \in [u]} X_i$.

\item 
For $t, u \geq 0, k\geq 2$ and $n\in \NN$, let $G_{k}(t,u,n)$ be
the maximal size of a $K_{k,\dotsc,k}$-free subset $A$ of an
$r$-grid $B$  of split grid-complexity $(t,u)$ with $\delta_{r-1}^r(B)\leq n$.
	\end{enumerate}
\end{defn}

\begin{rem}
\begin{enumerate}
   \item Note that $A_r$ has grid-complexity at most $1$, which is the reason we do not include a parameter for the grid-complexity of $A_r$ in the split grid-complexity of $A$. 
   
    \item If $A\subseteq B$ is of grid-complexity $s$, then it is of split grid-complexity $(0,s)$. 
    \item If $A\subseteq B$ is of split grid-complexity $(t, u)$, then it is of grid-complexity $t + u$.
\end{enumerate}
\end{rem}

For the rest of the proof, we abuse notation slightly and refer to the ``split grid-complexity'' of a set as the ``grid-complexity''. To complete the induction step we will prove the following Proposition.
\begin{prop}\label{prop:main}
  For any integers $t,u\geq 0, k\geq 2, r \geq 3$  there are
$\alpha' = \alpha'(r,k,t,u)\in \RR$ and $\beta' = \beta'(r,k,t,u)\in \NN$ such
that
\[ G_{k}(t,u,n) \leq \alpha' n \log^{\beta'}(n+1).\]
\end{prop}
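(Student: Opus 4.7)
The plan is a double induction: the \emph{outer} induction is on $u \geq 0$ (with $r \geq 3$ and $k \geq 2$ fixed, and $t$ allowed to range freely over $\mathbb{N}$), while at each inductive step $u \geq 1$ the argument reduces to a dyadic recurrence on $n$ of the kind handled by Fact~\ref{fact:fact}. This parallels the $r = 2$ argument of Section~\ref{sec:case-r=2}, which can be viewed as the $r = 2$ instance of the same plan.

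For the base case $u = 0$, a set $A$ of split grid-complexity $(t, 0)$ has the form $A = A^r \times A_r$ with $A^r \subseteq B^r$ of grid-complexity $t$. The $K_{k, \ldots, k}$-freeness of $A$ forces either $|A_r| < k$ (giving the trivial bound $|A| \leq (k-1) \prod_{i < r} n_i \leq (k-1) n$) or $A^r$ itself $K_{k, \ldots, k}$-free as an $(r-1)$-hypergraph (since a forbidden configuration in $A^r$ together with any $k$ elements of $A_r$ would violate the hypothesis on $A$). In the second case, the outer inductive hypothesis on $r$---namely Proposition~\ref{thm:main1} applied at $r-1$ and grid-complexity $t$---bounds $|A^r| \leq \alpha \delta^{r-1}_{r-2}(B^r) \log^{\beta}(\delta^{r-1}_{r-2}(B^r) + 1)$, whence $|A| \leq n_r |A^r| \leq \alpha n \log^{\beta}(n + 1)$ since $n_r \cdot \delta^{r-1}_{r-2}(B^r) \leq \delta^r_{r-1}(B) = n$.

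For the inductive step $u \geq 1$, apply Proposition~\ref{prop:coord} to $X_u$, presenting it as $X_u = \{(b^r, b_r) : f^r(b^r) < f_r(b_r)\}$ with $f^r \colon B^r \to S$ coordinate-wise monotone and $f_r \colon B_r \to S$ a function. Choose $h \in S$ so that both $\{b_r \in B_r : f_r(b_r) < h\}$ and $\{b_r \in B_r : f_r(b_r) > h\}$ have size at most $n_r/2$, and partition $B = B^r \times B_r$ into the nine cells induced by the three-way split $\{<h, =h, >h\}$ on $B^r$ (via $f^r$) and on $B_r$ (via $f_r$). Exactly as in the $r = 2$ argument, the two cells on which $X_u$ becomes automatic---the one with $f^r < h$ and $f_r \geq h$, and the one with $f^r = h$ and $f_r > h$---can be incorporated into $A^r$ as one additional basic set (raising $t$ by $1$ or $2$), after which $X_u$ is dropped and the outer inductive hypothesis on $u$ yields the desired bound; the two remaining cells on which $X_u$ is still active have the $B_r$-factor reduced to at most $n_r/2$ and are the target of a Fact~\ref{fact:fact}-style dyadic recursion.

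The principal obstacle absent in the $r = 2$ case is that halving $n_r$ alone does not halve $\delta^r_{r-1}(B) = \sum_{|I| = r-1} \prod_{i \in I} n_i$, because the term $\prod_{i < r} n_i$ is independent of $n_r$. My plan to handle this is to iterate the coordinate-splitting: within each still-active cell, reapply Proposition~\ref{prop:coord} to $X_u$ splitting off a \emph{different} coordinate, then halve that coordinate in turn, and continue recursively over all $r - 1$ non-final coordinates of $B^r$. This produces a binary-tree cascade of depth $r - 1$ containing $2^{r-1} - 1$ internal halvings, each contributing one logarithmic factor via Fact~\ref{fact:fact}, which per reduction of $u$ by one yields exactly $2^{r-1} - 1$ extra log factors and matches the claimed exponent $\beta(r, s) = s(2^{r-1} - 1)$. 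The parameter $t$ grows only by $O(1)$ per pass through the cascade, so allowing the outer inductive hypothesis on $u$ to cover all $t \in \mathbb{N}$ (with $\alpha', \beta'$ depending on $t, u$) closes the induction; the case where the chosen $h$ leaves $|\{b_r : f_r(b_r) = h\}|$ large is treated separately by observing that $X_u$ then reduces on that slab to a single basic set in $B^r$, which is absorbed cleanly into a subproblem of reduced $u$.
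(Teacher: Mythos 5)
Your base case $u=0$, the nine-cell decomposition at a threshold $h$, the identification of the two cells on which $X_u$ becomes automatic, and their absorption into a reduced-$u$ subproblem of higher $t$ all match the paper. The gap is concentrated exactly where you flag the difficulty: the halving step. You choose $h$ to halve only $|B_r|$, correctly note that this does not halve $\delta^r_{r-1}(B)$, and propose to repair this by a depth-$(r-1)$ cascade of further coordinate splittings. That cascade does not produce a recurrence that Fact~\ref{fact:fact} can close. Take $r=3$: after splitting coordinate $3$ and then coordinate $2$ inside each active cell, you have $4$ active leaf cells, and halving two of the three coordinate sizes reduces $\delta^3_2=n_1n_2+n_1n_3+n_2n_3$ by at most a factor of $2$ (the terms $n_1n_2$ and $n_1n_3$ each lose only one factor of $2$). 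The resulting recurrence $L(m)\le 4L(m/2)+\dotsb$ solves to $L(m)=O(m^2)$, not $O(m\operatorname{polylog}m)$; in general the branching factor $2^{r-1}$ of your tree always beats the shrinkage of $\delta^r_{r-1}$, so the bound degenerates to a superlinear power of $m$. There is a second, independent problem: the reduced-$u$ subproblems spawned at the internal nodes of the dyadic recursion do not shrink either (their $\delta^r_{r-1}$ still contains the unhalved term $n_1\dotsm n_{r-1}$), so summing them over the nodes of the recursion tree costs a factor of $n_r$ rather than $\log n_r$. Your remark that ``$t$ grows only by $O(1)$ per pass'' also hides that the passes are nested inside the dyadic recursion on $n$, so without a complexity class closed under the splitting the parameter $t$ would grow with the recursion depth.

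The missing idea is the paper's change of the quantity being halved. For an $f$-grid $H=H^r\times H_r$ (a product of an $f^r$-strip and an $f_r$-strip) the paper measures size by $\Delta(H):=|H^r|+\delta^{r-1}_{r-2}(H^r)\,|H_r|$, i.e.\ it treats the $(r-1)$-dimensional factor by its raw cardinality rather than as a product of coordinate sizes. Because $f^r$ and $f_r$ take values in the \emph{same} linear order $S$ (this is precisely what Proposition~\ref{prop:coord} supplies), a single threshold $h$ can be chosen with $|H^{r,<h}|+\delta\,|H_r^{<h}|\le m/2$ and likewise for $>h$ (Claim~\ref{claim:h}), so $\Delta$ is genuinely halved for both active cells, and the automatic cells inherit the same control; moreover $f^r$-strips are closed under intersection, so the complexity of the cells stays fixed throughout the dyadic recursion and one gets the clean recurrence $L(m)\le 2L(\lfloor m/2\rfloor)+2G_k(t+2,u-1,m)$. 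The cardinality term $|H^r|$ is then paid for only at the base case $u=0$, by the induction on $r$. No cascade over coordinates is needed, and indeed none works in its place.
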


We will use the following notations throughout the section:
\begin{itemize}
\item  $B=B_1\tdt B_r$ is a finite grid with $n=\delta^r_{r-1}(B)$;
\item $A\subseteq B$ is  a subset of grid-complexity $(t,u)$;
\item $B^r$ is the $(r-1)$-grid $B^r :=B_1\tdt B_{r-1}$;
\item  $A^r \subseteq B^r$ is a subset of grid-complexity $t$, $A_r \subseteq B_r$, 
and $X_1,\dotsc X_{u} \subseteq B$ are basic subsets such that 
$A= (A^r{\times} A_r)\cap\bigcap_{i \in [u]} X_i$.  
\end{itemize}

\medskip 
We proceed by induction on $u$.

\medskip

\noindent \emph{The base case $u=0$ of Proposition \ref{prop:main}.}

In this case  $A=A^r\times A_r$. 
If $A$ is $K_{k,\dotsc,k}$-free then either $A^r$ is
  $K_{k,\dotsc ,k}$-free or $|A_r|<k$. 

In the first case, by induction hypothesis on $r$, there are
$\alpha=\alpha(r-1, t,k)$ and $\beta=\beta(r-1, t)$ such that
$|A^r| \leq \alpha \delta^{r-1}_{r-2}(B^r)\log^\beta(
\delta^{r-1}_{r-2}(B^r)+1)$.  In the second case we have
$|A|\leq |B^r|k=\delta^{r-1}_{r-1}(B^r)k$.  

Since $n=\delta^r_{r-1}(B)=\delta^{r-1}_{r-1}(B^r)+\delta^{r-1}_{r-2}(B^r)
|B_r|$, the conclusion of the proposition follows with $\alpha' := \alpha, \beta' := \beta$.

\medskip
\noindent \emph{Induction step of Proposition \ref{prop:main}.} We assume now that the proposition holds for all pairs $(t,u')$ with $u'<u$ and $t \in \mathbb{N}$.

Given a tuple $x = (x_1, \ldots, x_r) \in B$, we let $x^r := (x_1, \ldots, x_{r-1})$. By Proposition \ref{prop:coord}, we can choose a finite linear order $(S,<)$, a coordinate-wise monotone function  $f^r\colon B^r\to S$ and a
function  $f_r\colon B_r\to S$
so that
\[  X_u=\left\{ x^r \oplus_r x_r \in B^r\times B_r \colon 
  f^r(x^r)< f_r(x_r)\right\}.\]
Moreover, by Remark~\ref{rem:coord}, we may assume without loss of generality that the coordinate-wise monotone function defining $X_u$ is given by \[ f(x^r \oplus_r x_r)=f^r(x^r) - f_r(x_r). \]

\begin{defn}
Given an arbitrary set $C^r \subseteq B^r$,
we say that a set $H^r \subseteq C^r$ is  an \emph{$f^r$-strip in $C^r$} if
\[ H^r=\left\{ x^r \in C^r  \colon l_1 \triangleleft_1 f^r(x^r) \triangleleft_2 l_2\right\}\]
for some $l_1,l_2\in S$, $\triangleleft_1, \triangleleft_2\in \{
<,\leq\}$.	Likewise, given an arbitrary set $C_r \subseteq B_r$, we say that $H_r \subseteq  C_r$ is an \emph{$f_r$-strip in $C_r$} if 
\[ H_r=\left\{ x_r \in C_r  \colon l_1 \triangleleft_1 f_r(x_r) \triangleleft_2 l_2\right\}\] for some $l_1,l_2\in S$, $\triangleleft_1, \triangleleft_2\in \{
<,\leq\}$. If $C^r = A^r$ or $C_r = A_r$, we simply say \emph{an $f^r$-strip} or \emph{$f_r$-strip}, respectively.
\end{defn}

\begin{rem}\label{rem: props of f-strips}
Note the following:
\begin{enumerate}
	\item $A^r$ is an $f^r$-strip, and $A_r$ is an $f_r$-strip;
	\item  every $f^r$-strip is a subset of the $(r-1)$-grid $B^r$ of grid-complexity $t+2$ (using Remark \ref{rem : leq is basic});
	\item the intersection of any two
$f^r$-strips is an $f^r$-strip;  the same conclusion holds for $f_r$-strips.
\end{enumerate}
\end{rem}

\begin{defn}
\begin{enumerate}
	\item We say that a subset $H\subseteq B$ is an \emph{$f$-grid}
if $H=H^r\times H_r$, where $H^r\subseteq B^r$ is an $f^r$-strip in $B^r$
and $H_r \subseteq B_r$ is an $f_r$-strip in $B_r$.
\item If $H=H^r\times H_r$ is an $f$-grid, we set
\[ \Delta(H) :=|H^r| + \delta^{r-1}_{r-2}(H^r)|H_r|
  \text{ (see Definition ~\ref{defn:delta} for } \delta^{r-1}_{r-2}). \]
  Note that if $H$ is a sub-grid of $B$, then $\Delta(H)=\delta^r_{r-1}(H)$. 
 
 \item For an $f$-grid $H$, we will denote by $A_H$ the set $A\cap H$.

\end{enumerate}	
\end{defn}

The induction step for Proposition~\ref{prop:main} will follow from the
following proposition.

\begin{prop}\label{prop:main1}
  For every integer $k\geq 2, r \geq 3$ there are $\alpha' = \alpha'(r,k,t,u)\in \RR$ and
  $\beta'=\beta'(r,t,u) \in \NN$ such that, for any $f$-grid $H$, if the set $A_H$ is
  $K_{k,\dotsc,k}$-free then
  \[ |A_H| \leq \alpha' \Delta(H)\log^{\beta'}(\Delta(H)+1). \] 
\end{prop}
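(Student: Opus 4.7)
The plan is to prove Proposition~\ref{prop:main1} by induction on $u$ (with the overall induction on $r$ already in force), and for each fixed $u$ by an inner recursion on $\Delta(H)$ in the spirit of the $r=2$ argument from Section~\ref{sec:case-r=2}. The key structural input is the coordinate-splitting form $X_u = \{x^r \oplus_r x_r : f^r(x^r) < f_r(x_r)\}$ supplied by Proposition~\ref{prop:coord}, which turns the last basic constraint into a comparison between two independent functions and thereby lets a single threshold $h \in S$ decouple the two sides.

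For the induction step, fix an $f$-grid $H = H^r \times H_r$ and choose $h \in S$ to be the median of the weighted multiset assigning weight $1$ to each value $f^r(x^r)$, $x^r \in H^r$, and weight $\delta^{r-1}_{r-2}(H^r)$ to each value $f_r(x_r)$, $x_r \in H_r$; note that the total weight is exactly $\Delta(H)$. Writing $H^r_\square := \{x^r \in H^r : f^r(x^r) \square h\}$ and $H_r^\square := \{x_r \in H_r : f_r(x_r) \square h\}$ for $\square \in \{<,=,>\}$, the comparison $f^r < f_r$ forces
\[ A_H \;=\; A_{H_1} \cup A_{H_2} \cup A_{H_3} \cup A_{H_4}, \]
where $H_1 := H^r_< \times H_r^<$ and $H_2 := H^r_> \times H_r^>$ are the two ``diagonal'' $f$-subgrids on which $X_u$ is still a live constraint, $H_3 := H^r_< \times (H_r^= \cup H_r^>)$ and $H_4 := H^r_= \times H_r^>$ are the two ``cross'' $f$-subgrids on which $X_u$ is automatically satisfied, and the remaining four regions of $H^r \times H_r$ violate $X_u$ and contribute nothing. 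Each $H_i$ is again an $f$-grid by Remark~\ref{rem: props of f-strips}(3).

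The diagonal pieces feed the inner recursion: the median choice of $h$ gives $|H^r_<| + \delta^{r-1}_{r-2}(H^r)\,|H_r^<| \leq \Delta(H)/2$, and the monotonicity $\delta^{r-1}_{r-2}(H^r_<) \leq \delta^{r-1}_{r-2}(H^r)$ upgrades this to $\Delta(H_1) \leq \Delta(H)/2$, and symmetrically $\Delta(H_2) \leq \Delta(H)/2$. For the cross pieces, the vanishing of $X_u$ turns $A \cap H_3 = H_3 \cap \bigcap_{i=1}^{u-1} X_i$ into a $K_{k,\ldots,k}$-free set of split grid-complexity $(t+2,u-1)$, with the $f^r$-strip $H^r_<$ serving as the new $A^r$ (a subset of $B^r$ of grid-complexity $\leq t+2$ by Remark~\ref{rem: props of f-strips}(2)) and $H_r^= \cup H_r^>$ as the new $A_r$. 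The induction hypothesis of Proposition~\ref{prop:main1} at $(t+2,u-1)$, applied with the ``base'' $f'$-grid equal to $H_3$ itself (no new strips), bounds $|A_{H_3}|$ by $O\bigl(\Delta'(H_3)\log^{\beta'(r,t+2,u-1)}(\Delta'(H_3)+1)\bigr)$, and the trivial estimate $\Delta'(H_3) \leq \Delta(H)$ absorbs this into the form $C \Delta(H)\log^B(\Delta(H)+1)$; $H_4$ is handled symmetrically. At the base case $u=1$ no basic sets remain, and one invokes Proposition~\ref{thm:main1} for $r-1$ directly on the strip $H^r_<$ of grid-complexity $\leq t+2$.

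Summing the four contributions yields the recurrence $\mu(n) \leq 2\mu(n/2) + C n\log^B(n+1)$ for the envelope $\mu(n) := \sup\{|A_H| : H \text{ an }f\text{-grid with }\Delta(H)\leq n\}$ with $\mu(0)=0$, and Fact~\ref{fact:fact} upgrades this to $\mu(n) = O(n\log^{B+1}(n+1))$, giving $\beta'(r,t,u) = \beta'(r,t+2,u-1) + 1$. Unrolling through the base $\beta'(r,t,1) = \beta(r-1,t+2) + 1$ produces $\beta'(r,0,s) = \beta(r-1,2s) + s$, which together with the inductive formula $\beta(r-1,s') = s'(2^{r-2}-1)$ telescopes to $\beta(r,s) = s(2^{r-1}-1)$. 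Proposition~\ref{prop:main} is then recovered by applying Proposition~\ref{prop:main1} to $H = B$, since $\Delta(B) = \delta^r_{r-1}(B)$. The principal technical obstacle is the weighted-median choice of $h$ and the accompanying verification that $\Delta(H_1), \Delta(H_2) \leq \Delta(H)/2$: because $\Delta$ mixes $|H^r|$ with the $\delta^{r-1}_{r-2}$-weighted cardinality of $H_r$, one cannot use a naive median on $H^r \sqcup H_r$ but must weight $H_r$-values by $\delta^{r-1}_{r-2}(H^r)$ and then lean on the monotonicity of $\delta^{r-1}_{r-2}$ under restriction. A secondary subtlety is the inflation of $t$ by $2$ at each descent in $u$, which is precisely what drives the doubling in the recursion for $\beta(r,s)$ and produces the exponential-in-$r$ dependence.
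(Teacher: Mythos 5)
Your proposal is correct and follows essentially the same route as the paper: the same weighted-median choice of $h$ (weighting $H_r$-values by $\delta^{r-1}_{r-2}(H^r)$), the same four-piece decomposition into two diagonal $f$-grids of halved $\Delta$ and two cross pieces on which $X_u$ holds automatically and the split grid-complexity drops to $(t+2,u-1)$, and the same recurrence resolved by Fact~\ref{fact:fact}. The only (immaterial) differences are organizational: the paper phrases the cross-piece bound via the quantity $G_k(t+2,u-1,m)$ rather than re-invoking Proposition~\ref{prop:main1} with a trivial $f$-grid, and it treats $u=0$ (where one needs the dichotomy that either $A^r$ is $K_{k,\dotsc,k}$-free or $|A_r|<k$) as the explicit base case.
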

We should stress that in the above proposition $\alpha'$ and $\beta'$ do
not depend on $f^r, f_r$, $B$, $A^r$, and $A_r$ but they may depend on our fixed $t$
and $u$. 

Given Proposition \ref{prop:main1}, we can apply it to the $f$-grid $H := A^r\times A_r$ (so $A_H = A$) and get 
 $$|A|\leq  \alpha' \Delta(H)\log^{\beta'}(\Delta(H)+1).$$
It is easy to see that
$\Delta(A^r\times A_r)\leq \delta^r_{r-1}(B)$, hence 
Proposition~\ref{prop:main} follows with the same $\alpha'$ and $\beta'$.

\medskip
We proceed with  the proof of Proposition~\ref{prop:main1}

\begin{proof}[Proof of Proposition \ref{prop:main1}]Fix $m\in \NN$, and let $L(m)$ be the maximal size of a $K_{k,\dotsc,k}$-free
set $A_H$ among all $f$-grids $H \subseteq B$ with $\Delta(H)\leq m$.
We need to show that for some $\alpha'=\alpha'(k)\in \RR$ and $\beta' \in
\NN$
we have
\[L(m)\leq \alpha' m \log^{\beta'}(m+1).\]

Let $H=H^r\times H_r$ be an $f$-grid with $\Delta(H)\leq m$.

For $l\in S$  and $\square \in \{ <,=,>, \leq, \geq\}$, let

\[   H^{r,\square l} := \left\{ x^r \in H^r \colon f^r(x^r) \square  l \right\} \]
and
\[   H_r^{\square l} := \left\{ x_r \in H_r \colon f_r(x_r) \square  l \right\}. \]

Note that for every $l \in S$, $H^{r,\square l}$ is an $f^r$-strip in $H^r$, $H_r^{\square l}$ is an $f_r$-strip in $H_r$, and their product is an $f$-grid.

\begin{claim}
  \label{claim:h}
  There is $h\in S$ such that 
  \[ \Delta(H^{r,<h}\times H_r^{<h}) \leq m/2 \text{ and }
\Delta(H^{r,>h}\times H_r^{>h}) \leq m/2.\]
\end{claim}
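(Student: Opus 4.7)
The plan is to mimic the base case $r=2$ by choosing $h$ via a monotone ``cumulative weight'' function and exploiting that the $\Delta$-mass on either side of $h$ sums to at most $\Delta(H) \le m$. Concretely, I would introduce the non-decreasing function $g\colon S \to \mathbb{Z}_{\ge 0}$ defined by
\[
g(h) \;:=\; \Delta\!\left(H^{r,\leq h}\times H_r^{\leq h}\right) \;=\; |H^{r,\leq h}| + \delta^{r-1}_{r-2}(H^{r,\leq h})\,|H_r^{\leq h}|,
\]
which is constant between consecutive elements of the finite set $U := f^r(H^r)\cup f_r(H_r)$, takes value $0$ below $U$, and value $\Delta(H)\leq m$ above $U$. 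If $\Delta(H)<m/2$ the claim is trivial, so assume otherwise and take $h$ to be the \emph{minimal} element of $U$ with $g(h)\ge m/2$.

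To verify the first inequality, note that since $h\in U$ and no element of $H^r\cup H_r$ has $f^r$- or $f_r$-value strictly between $h$ and its predecessor $h^-$ in $U$ (taking $h^-=-\infty$ if $h$ is minimal in $U$), we have $H^{r,<h}=H^{r,\leq h^-}$ and $H_r^{<h}=H_r^{\leq h^-}$. Hence $\Delta(H^{r,<h}\times H_r^{<h})=g(h^-)<m/2$ by the minimality of $h$.

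For the second inequality, the plan is to show $g(h)+\Delta(H^{r,>h}\times H_r^{>h})\le \Delta(H)\le m$, from which $\Delta(H^{r,>h}\times H_r^{>h})\le m-g(h)\le m/2$. Expanding the two summands, the cardinality pieces cancel cleanly: $|H^{r,\leq h}|+|H^{r,>h}|=|H^r|$. For the cross terms, I would use that $\delta^{r-1}_{r-2}$ is monotone under inclusion of the defining set (each projection of $H^{r,\leq h}$ and $H^{r,>h}$ sits inside the corresponding projection of $H^r$), giving
\[
\delta^{r-1}_{r-2}(H^{r,\leq h})\,|H_r^{\leq h}| + \delta^{r-1}_{r-2}(H^{r,>h})\,|H_r^{>h}| \;\le\; \delta^{r-1}_{r-2}(H^r)\left(|H_r^{\leq h}|+|H_r^{>h}|\right) \;=\; \delta^{r-1}_{r-2}(H^r)\,|H_r|,
\]
which is exactly what is needed.

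The only genuine subtlety is keeping the asymmetry between the strict $<$ that appears in $\phi,\psi$ and the non-strict $\leq$ used in $g$ straight; restricting the choice of $h$ to lie in $U$ is precisely what converts the threshold inequality $g(h^-)<m/2$ into the required bound on $\phi(h)$. Everything else is bookkeeping, and the argument parallels the ``minimal $h$ with cumulative count at least $n/2$'' step from Section~\ref{sec:case-r=2}.
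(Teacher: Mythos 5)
Your argument is correct and is essentially the paper's proof: both pick the minimal $h$ in $f^r(H^r)\cup f_r(H_r)$ at which a cumulative weight reaches $m/2$, and use monotonicity of $\delta^{r-1}_{r-2}$ under inclusion to control both halves. The only cosmetic difference is that the paper runs the threshold argument on $|H^{r,\leq h}|+\delta^{r-1}_{r-2}(H^r)\,|H_r^{\leq h}|$ with the coefficient frozen at the full $H^r$, which makes the two-sided split exact, whereas you track $\Delta$ of the truncated grid itself and invoke monotonicity one step earlier; both variants close the same way.
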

\begin{proof}[Proof of Claim]
  Let $\delta :=\delta^{r-1}_{r-2}(H^r)$.

  Let $h$ be the minimal element in $f^r(H^r)\cup
  f_r(H_r)$ with \[ |H^{r,\leq h }|+\delta |H_r^{\leq h}| \geq m/2.\]
  Then
  $ |H^{r,< h }|+\delta |H_r^{< h}| \leq m/2$ and 
  $ |H^{r, > h }|+\delta |H_r^{> h}| \leq m/2$.
  Since $H^{r,< h}, H^{r,> h} \subseteq H^r$, we have
  $\delta^{r-1}_{r-2}(H^{r,< h}), \delta^{r-1}_{r-2}(H^{r,> h})  \leq \delta$.
The claim follows. 
\end{proof}
Let $h$ be as in the claim. It is not hard to see that the following holds:
\begin{gather*}
	\left( H^{r, \leq h} \times H_r^{\geq h} \right)
\cap X_u =  \left( H^{r, < h} \times H_{r}^{\geq h} \right) \cup \left( H^{r, =h} \times H_r^{>h} \right),\\
\left( H^{r, \geq h} \times H_r^{\leq h} \right) \cap X_u = \emptyset.
\end{gather*}
It follows that
\begin{gather*} A_H \cap X_u =
  \left[(H^{r,<h}\times H_r^{<h}) \cap X_u \right] 
  \cup
   \left[(H^{r,>h}\times H_r^{>h}) \cap X_u \right] \\
\cup 
(H^{r,<h}\times H_r^{\geq h})
\cup
(H^{r,=h}\times H_r^{>h}).
\end{gather*}

Hence, by the choice of $h$ and using Remark \ref{rem: props of f-strips}(2), 
$$L(m)\leq 2 L(\lfloor
m/2\rfloor)+ 2G_{k}(t+2,u-1,m).$$

Applying the induction hypothesis on $u$ and using Fact~\ref{fact:fact}
 we obtain  $L(m)\leq \alpha' m \log^{\beta'}(m+1)$
for some $\alpha'=\alpha'(k)\in \RR$ and $\beta'\in \NN$. 

This finishes the proof of Proposition \ref{prop:main1}, and hence of the
induction step of Proposition \ref{thm:main1}.
\end{proof}

\medskip

Finally, inspecting the proof, we have shown the following:
\begin{enumerate}
	\item $\beta(2,s) \leq s$ for all $s \in \mathbb{N}$; 

 \item $\beta'(r,t, 0) \leq \beta(r-1,t)$ for all $r \geq 3$ and $t \in \mathbb{N}$;

 \item $\beta'(r,t,u) \leq \beta'(r, t+2, u-1) + 1$ for all $r \geq 3, t \geq 0, u \geq 1$. 
\end{enumerate}	

Iterating (3), for every $r \geq 3, s \geq 1$ we have $\beta(r,s) \leq \beta'(r,0,s) \leq \beta'(r, 2s, 0) + s$. Hence, by (2), $\beta(r,s) \leq \beta(r-1, 2s) + s$ for every $r \geq 3$ and $s \geq 1$. Iterating this, we get 
$\beta(r,s) \leq \beta(2, 2^{r-2}s) + s \sum_{i=0}^{r-3}2^i$. Using (1), this implies $\beta(r,s) \leq s \sum_{i=0}^{r-2}2^i  = s(2^{r-1}-1)$ for all $r \geq 3, s \geq 1$. Hence, by Remark \ref{rem: s=0} and (1) again, $\beta(r,s) \leq s(2^{r-1}-1)$ for all $r \geq 2, s \geq 0$.
\subsection{Some applications}\label{sec: appl of main thm}

	We observe several immediate applications of Theorem 
	\ref{thm:main}, starting with the following bound for semilinear hypergraphs.
	
\begin{cor}\label{cor: semilin Zar}

For every $r,s,t,k \in \mathbb{N}, r \geq 2$ there exist some $\alpha=\alpha(r,s,t,k)\in \RR$ and $\beta(r,s) := s(2^{r-1}-1)$ satisfying the following.

For any semilinear $K_{k, \ldots, k}$-free $r$-hypergraph $H = (V_1, \ldots, V_r;E)$ of description complexity $(s,t)$ (see Definition \ref{def: semilin hypergraphs}), taking $V:= \prod_{i \in [r]}V_i$ we have 
\[ |E| \leq \alpha \delta^r_{r-1}(V) \log^\beta \left( \delta^r_{r-1}(V)+1 \right). \]
\end{cor}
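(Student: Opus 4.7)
The plan is to derive this directly from Theorem~\ref{thm:main}, using Example~\ref{ex: compl of semilin sets} to translate the semilinear description complexity $(s,t)$ into the grid-complexity framework, and Remark~\ref{rem: fin union constant} to handle the outer union of $t$ pieces.

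First, by Definition~\ref{def: semilin hypergraphs}, after identifying $V_i$ with a subset of $\mathbb{R}^{d_i}$, the edge set $E$ equals $X \cap \prod_{i \in [r]} V_i$ for some semilinear set $X \subseteq \mathbb{R}^d = \prod_{i \in [r]} \mathbb{R}^{d_i}$ of description complexity $(s,t)$. By Definition~\ref{def: semilin sets}, $X$ is a union of at most $t$ sets $X^{(1)}, \ldots, X^{(t)}$, each of the form
\[
\{ \bar x \in \mathbb{R}^d : f_1(\bar x) \leq 0, \ldots, f_p(\bar x) \leq 0, f_{p+1}(\bar x) < 0, \ldots, f_s(\bar x) < 0 \}
\]
for some linear $f_1, \ldots, f_s$. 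View $\mathbb{R}^d$ as the $r$-grid $\mathbb{R}^{d_1} \tdt \mathbb{R}^{d_r}$. By Example~\ref{ex: compl of semilin sets}, each $X^{(j)}$ has grid-complexity at most $s$ in this $r$-grid, and this grid-complexity is inherited by the intersection with the sub-grid $V_1 \tdt V_r$. Set $E^{(j)} := X^{(j)} \cap \prod_{i} V_i$, so that $E = \bigcup_{j \in [t]} E^{(j)}$.

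Next, since $E$ is $K_{k,\ldots,k}$-free, each $E^{(j)} \subseteq E$ is also $K_{k,\ldots,k}$-free and has grid-complexity $s$ in the finite $r$-grid $V = V_1 \tdt V_r$. Theorem~\ref{thm:main} therefore applies to each $E^{(j)}$, yielding
\[
|E^{(j)}| \leq \alpha_0 \, \delta^r_{r-1}(V) \log^{\beta}\!\bigl( \delta^r_{r-1}(V) + 1 \bigr),
\]
where $\alpha_0 = \alpha_0(r,s,k)$ is the constant produced by Theorem~\ref{thm:main} and $\beta = s(2^{r-1}-1)$. Summing over the $t$ pieces (as in Remark~\ref{rem: fin union constant}) gives the desired bound with $\alpha := t \cdot \alpha_0 = \alpha(r,s,t,k)$ and the same $\beta = s(2^{r-1}-1)$.

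There is essentially no obstacle beyond bookkeeping: the only thing to verify carefully is that a semilinear set of description complexity $(s,t)$ in $\mathbb{R}^d$, when viewed with respect to the partition of coordinates induced by the $V_i$, really does decompose into at most $t$ subsets of grid-complexity $s$ in the $r$-grid $\mathbb{R}^{d_1} \tdt \mathbb{R}^{d_r}$. This is exactly the content of Example~\ref{ex: compl of semilin sets}, so the corollary follows at once.
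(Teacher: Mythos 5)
Your proof is correct and follows exactly the route of the paper's own argument: decompose $E$ into at most $t$ pieces of grid-complexity at most $s$ via Example~\ref{ex: compl of semilin sets}, apply Theorem~\ref{thm:main} to each $K_{k,\ldots,k}$-free piece, and sum as in Remark~\ref{rem: fin union constant}. No gaps; the bookkeeping point you flag (that grid-complexity is inherited on the sub-grid $V_1\times\dotsb\times V_r$) is indeed all that needs checking.
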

\begin{proof}
	By assumption the edge relation $E$ can be defined by a union of $t$ sets, each of which is defined $s$ linear  equalities and inequalities, hence of grid-complexity $\leq s$ (see Example \ref{ex: compl of semilin sets}). The conclusion follows by Theorem \ref{thm:main} and Remark \ref{rem: fin union constant}.
\end{proof}

As a special case with $r=2$, this implies a bound for the following incidence problem.

\begin{cor}\label{cor: inc with polytopes}
	For every $s,k \in \mathbb{N}$ there exists some $\alpha=\alpha(s,k)\in \RR$ satisfying the following. 
	
	Let $d \in \mathbb{N}$ and $H_1, \ldots, H_s \subseteq \mathbb{R}^d$ be finitely many (closed or open) half-spaces in $\mathbb{R}^d$. Let $\mathcal{F}$ be the (infinite) family of all possible polytopes in $\mathbb{R}^d$ cut out by arbitrary translates of $H_1, \ldots, H_s$.

	For any set $P$ of $n_1$ points in $\mathbb{R}^d$ and any set $F$ of $n_2$ polytopes in $\mathcal{F}$, if the incidence graph on $P \times F$ is $K_{k,k}$-free, then it contains at most $\alpha n \log^{s} n$ incidences.
\end{cor}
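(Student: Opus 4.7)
The plan is to recast the incidence configuration as a semilinear bipartite graph on a suitable pair of parameter spaces and then invoke Corollary \ref{cor: semilin Zar} with $r=2$. First I would parametrize the family $\mathcal{F}$. Each half-space $H_i$ has the form $\{x\in \mathbb{R}^d : \langle a_i, x\rangle \mathrel{\square_i} b_i\}$ for some fixed $a_i \in \mathbb{R}^d$, $b_i \in \mathbb{R}$, and $\square_i \in \{<,\leq\}$. A translate of $H_i$ by a vector $v \in \mathbb{R}^d$ equals $\{x : \langle a_i, x\rangle \mathrel{\square_i} b_i + \langle a_i,v\rangle\}$, so it is completely determined by a single scalar $c_i := b_i + \langle a_i,v\rangle \in \mathbb{R}$. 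Consequently, every polytope in $\mathcal{F}$ is encoded by a vector $c = (c_1,\dots,c_s) \in \mathbb{R}^s$ via
\[
F_c = \bigl\{x \in \mathbb{R}^d : \langle a_1, x\rangle \mathrel{\square_1} c_1,\ \ldots,\ \langle a_s, x\rangle \mathrel{\square_s} c_s\bigr\}.
\]

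Next I would write down the incidence relation as a semilinear subset of $\mathbb{R}^d \times \mathbb{R}^s$. Namely, set
\[
X := \bigl\{(x, c) \in \mathbb{R}^d \times \mathbb{R}^s : \langle a_i, x \rangle - c_i \mathrel{\square_i} 0 \text{ for all } i \in [s]\bigr\}.
\]
Each $a_i$ is fixed, so each $f_i(x,c) := \langle a_i, x \rangle - c_i$ is a linear function of the combined variable $(x,c) \in \mathbb{R}^{d+s}$. Thus $X$ is a single intersection of $s$ linear (strict or non-strict) inequalities, i.e. $X$ is semilinear of description complexity $(s,1)$ in the sense of Definition \ref{def: semilin sets}.

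Given a set $P$ of $n_1$ points in $\mathbb{R}^d$ and a set $F$ of $n_2$ polytopes in $\mathcal{F}$, choose parameter vectors $C \subseteq \mathbb{R}^s$ with $|C|=n_2$ realizing the members of $F$. The incidence graph on $P \times F$ is then isomorphic to the bipartite graph $(P, C; X \cap (P \times C))$, which is a semilinear $2$-hypergraph of description complexity $(s,1)$ in the sense of Definition \ref{def: semilin hypergraphs}. By hypothesis this graph is $K_{k,k}$-free, so applying Corollary \ref{cor: semilin Zar} with $r=2$, $t=1$ and the given $s,k$ yields a bound of the form $\alpha \cdot \delta^2_1(P \times C) \cdot \log^{\beta}\!\bigl(\delta^2_1(P\times C)+1\bigr)$ with $\beta = s(2^{r-1}-1) = s$. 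Since $\delta^2_1(P\times C) = n_1 + n_2 = n$, this becomes $\alpha\, n\, \log^s(n+1)$, which (absorbing the ``$+1$'' into the constant for $n$ large, and handling small $n$ trivially) gives the advertised bound $\alpha\, n \log^s n$ for a suitable $\alpha = \alpha(s,k)$.

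There is essentially no obstacle here; the only point requiring a moment of care is verifying that translates of a fixed half-space form a one-parameter family and that mixed strict/non-strict inequalities are all admissible in Definition \ref{def: semilin sets}, both of which are immediate. The dimension $d$ does not enter the final bound because Definition \ref{def: semilin hypergraphs} places no restriction on the ambient dimensions $d_i$ of the parts.
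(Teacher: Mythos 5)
Your proof is correct and follows essentially the same route as the paper: encode the incidence relation as a semilinear set of description complexity $(s,1)$ and apply Corollary \ref{cor: semilin Zar} with $r=2$. The only cosmetic difference is that you parametrize each translated half-space by a single scalar (so polytopes live in $\mathbb{R}^s$) whereas the paper uses the full translation vectors (parameter space $\mathbb{R}^{sd}$); since Definition \ref{def: semilin hypergraphs} is dimension-free, this changes nothing.
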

\begin{proof}
We can write 
$$H_i = \left\{\bar{x} = (x_1, \ldots, x_d) \in \mathbb{R}^d : \sum_{j \in [d]} a_{i,j} x_j \square_i b_i \right\},$$ where $a_{i,j},b_i \in \mathbb{R}$ and $\square_i \in \{ >, \geq \}$ for $i \in [s], j \in [d]$ depending on whether $H_i$ is an open or a closed half-space.

Every polytope $F \in \mathcal{F}$ is of the form $\bigcap_{i \in [s]} (\bar{y}_i + H_i)$ for some $(\bar{y}_1, \ldots, \bar{y}_s) \in \mathbb{R}^{sd}$, where $\bar{y}_i + H_i$ is the translate of $H_i$ by the vector $\bar{y}_i = (y_{i,1}, \ldots, y_{i,d}) \in \mathbb{R}^d$, i.e.~
$$\bar{y}_i + H_i = \left\{ \bar{x} \in \mathbb{R}^d : \sum_{j \in [d]} a_{i,j} x_j + \sum_{j \in [d]}(-a_{i,j})y_j \square_i b_i \right\}.$$

Then the incidence relation between points in $\mathbb{R}^d$ and polytopes in $\mathcal{F}$ can be identified with the semilinear set 
$$\left\{ \left(\bar{x}; (y_{i,j})_{i \in [s], j \in [d]} \right) \in \mathbb{R}^d \times \mathbb{R}^{sd} : \bigwedge_{i \in [s]}  \sum_{j \in [d]} a_{i,j} x_j + \sum_{j \in [d]}(-a_{i,j})y_{i,j} \square_i b_i  \right\}$$
defined by $s$ linear inequalities. 
 The conclusion now follows by Corollary \ref{cor: semilin Zar} with $r=2$.
\end{proof}

 In particular, we get a bound for the original question that motivated this paper.
\begin{cor}\label{cor: inc with boxes}
	Let $\mathcal{F}_d$ be the family of all (closed or open) boxes in $\mathbb{R}^d$. Then for every $k$ there exists some $\alpha = \alpha(d,k)$ satisfying the following.
	
	For any set $P$ of $n_1$ points in $\mathbb{R}^d$ and any set $F$ of $n_2$ boxes in $\mathcal{F}_d$, if the incidence graph on $P \times F$ is $K_{k,k}$-free, then it contains at most $\alpha  n  \log^{2 d} n $ incidences.
\end{cor}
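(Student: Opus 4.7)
The plan is to reduce directly to Corollary \ref{cor: inc with polytopes}. The key observation is that a box in $\mathbb{R}^d$ with a prescribed pattern of open/closed faces is exactly a polytope cut out by $2d$ translates of a fixed collection of $2d$ axis-parallel half-spaces: for each coordinate direction $i \in [d]$, one half-space of the form $\{x_i \geq 0\}$ or $\{x_i > 0\}$, and one of the form $\{x_i \leq 0\}$ or $\{x_i < 0\}$, the choice between open and closed being determined by the pattern.

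First I would partition the family $F$ into sub-families $F_\sigma$ indexed by the $2^{2d}$ possible open/closed patterns $\sigma$ on the $2d$ faces. Since the incidence graph on $P \times F$ is $K_{k,k}$-free, its restriction to $P \times F_\sigma$ is also $K_{k,k}$-free for every $\sigma$. By the preceding observation, each $F_\sigma$ is a finite subset of the family $\mathcal{F}$ of polytopes cut out by translates of a fixed choice of $2d$ half-spaces (with fixed open/closed nature), so I may apply Corollary \ref{cor: inc with polytopes} with $s = 2d$ to bound the number of incidences between $P$ and $F_\sigma$ by $\alpha_\sigma \cdot n \log^{2d} n$ for some $\alpha_\sigma = \alpha_\sigma(d,k)$.

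Summing over the at most $4^d$ patterns then yields a total incidence count of at most $\alpha \cdot n \log^{2d} n$, where $\alpha := 4^d \max_\sigma \alpha_\sigma$ depends only on $d$ and $k$. There is no real obstacle: the substantive content has already been done in Corollary \ref{cor: inc with polytopes}, and what remains is the routine bookkeeping of realizing each box as an intersection of $2d$ translates of $2d$ fixed half-spaces, together with handling the open/closed face alternatives by a partition argument that is absorbed into the constant $\alpha(d,k)$.
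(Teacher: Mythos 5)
Your proof is correct and follows essentially the same route as the paper, which deduces the corollary from Corollary \ref{cor: inc with polytopes} by noting that every box in $\mathbb{R}^d$ is cut out by translates of $2d$ fixed axis-parallel half-spaces. Your explicit partition into the $4^d$ open/closed face patterns is a slightly more careful handling of a detail the paper's one-line proof glosses over, and it is absorbed into the constant exactly as you say.
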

\begin{proof}
	Immediate from Corollary \ref{cor: inc with polytopes}, since we have $2d$ half-spaces in $\mathbb{R}^d$ so that every box in $\mathbb{R}^d$ is cut out by the intersection of their translates.
\end{proof}

\section{Lower bounds}\label{sec: lower bounds}

While we do not know if the bound $\beta(2,s) \leq s$ in Theorem \ref{thm:main} is optimal, in this section we show that at least the logarithmic factor is unavoidable already for the incidence relation between points and dyadic boxes in $\mathbb{R}^2$.

We describe a slightly more general construction first. Fix $d \in \mathbb{N}_{>0}$.
\begin{defn}
	Given finite tuples $\bar{p}=(p_1, \ldots, p_{n}), \bar{q}=(q_1, \ldots ,q_n)$ and $\bar{r}=(r_1, \ldots, r_m)$ with $p_i,q_i,r_i \in \mathbb{R}^d$, say $p_i = (p_{i,1}, \ldots, p_{i,d}), q_i = (q_{i,1}, \ldots, q_{i,d}), r_i = (r_{i,1}, \ldots, r_{i,d})$, we say that \emph{$\bar{p}$ and $\bar{q}$ have the same order-type over $\bar{r}$} if 
	$$p_{i,j} \square p_{i',j'} \iff q_{i,j} \square q_{i',j'} \mbox{ and}$$
	$$p_{i,j} \square r_{k,j'} \iff q_{i,j} \square r_{k,j'}$$	for all $\square \in \{<,>,= \}$, $1 \leq i,i' \leq n, 1 \leq j,j' \leq d$ and $1 \leq k \leq m$.
	\end{defn}
	In other words, the tuples $(p_{i,j} : 1 \leq i \leq n, 1\leq j \leq d)$ and $(q_{i,j} : 1 \leq i \leq n, 1\leq j \leq d)$ have the same quantifier-free type over the set $\{r_{i,j} : 1 \leq i \leq m, 1 \leq j \leq d \}$ in the structure $(\mathbb{R}, <)$.

\begin{rem}\label{rem: same type}

	Assume that $P =\{ p_1, \ldots, p_{n} \} \subseteq \mathbb{R}^d$ is a finite set of points  and $B$ is a finite set of $d$-dimensional open boxes with axis-parallel sides, with $I$ incidences between $P$ and $B$.
	\begin{enumerate}
		\item By perturbing $P$ and $B$ slightly, we may assume that for every $1 \leq j \leq d$, all points in $P$ have pairwise distinct $j$th coordinates $p_{1,j}, \ldots, p_{n,j}$, and none of the points in $P$ belongs to the border of any of the boxes in $B$, while the incidence graph between $P$ and $B$ remains unchanged.
		\item Let $\bar{r}$ be the tuple listing all corners of all boxes in $B$. If $P' = \{p'_1, \ldots,  p'_n\} \subseteq \mathbb{R}^d$ is an arbitrary set of points with the same order-type as $P$ over $\bar{r}$, then the incidence graph on $P \times B$ is isomorphic to the incidence graph on $P' \times B$.

	\end{enumerate}
	
\end{rem}
We have the following lemma for combining point-box incidence configurations in a higher-dimensional space.
\begin{lem}\label{lem: stepping up}
Given any $d,n_1,n_2,n'_1, n'_2, m,m' \in \mathbb{N}_{>0}$, assume that:
\begin{enumerate}
	\item there exists a set of points $P^{d-1} \subseteq \mathbb{R}^{d-1}$ with $|P^{d-1}| = n_1$ and a set of $(d-1)$-dimensional boxes $B^{d-1}$ with $|B^{d-1}| = n_2$, with $m$ incidences between them, and the incidence graph $K_{2,2}$-free;
	\item there exists a set of points $P^d \subseteq \mathbb{R}^d$ with $|P^d| = n'_1$ and a set of $d$-dimensional boxes $B^d$ with $|B^d|=n'_2$, with $m'$ incidences between them and the incidence graph $K_{2,2}$-free.
	\end{enumerate}
	Then there exists a set of points $P \subseteq \mathbb{R}^d$ with $|P| = n_1 n'_1$ and a set of $d$-dimensional boxes $B$ with $|B| = n_1n'_2+n'_1n_2$, so that there are $n_1m' + m n'_1$ incidences between $P$ and $B$ and their incidence graph is still $K_{2,2}$-free.

\end{lem}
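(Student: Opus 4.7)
The plan is a scale-separated product construction. By Remark~\ref{rem: same type}, after a small perturbation I may assume that all coordinates of points in $P^d$ (resp.\ $P^{d-1}$) are pairwise distinct and that no point lies on the boundary of any of the given boxes. Fix a large scale parameter $M>0$ and small buffers $\eta_1, \eta_2 > 0$, all to be calibrated at the end against the (finitely many) coordinate gaps in the input data.

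For $i \in [n_1]$ and $j \in [n'_1]$, define
\[ q_{i,j} := \bigl(M\,(p^{d-1}_i)_1 + (p^d_j)_1,\; \ldots,\; M\,(p^{d-1}_i)_{d-1} + (p^d_j)_{d-1},\; (p^d_j)_d\bigr) \in \mathbb{R}^d. \]
The first $d-1$ coordinates combine an $M$-scaled copy of $p^{d-1}_i$ with a bounded offset from $p^d_j$, while the last coordinate depends only on $j$; for $M$ large the $n_1 n'_1$ points $q_{i,j}$ are distinct. Then I introduce two families of boxes. The \emph{intra-layer} family contains one box $\tilde B^{(1)}_{i,k}$ per pair $(i,k)$ with $B^d_k = \prod_{e \in [d]}(a_{k,e},b_{k,e}) \in B^d$, obtained by translating the first $d-1$ coordinate ranges of $B^d_k$ by $M\,p^{d-1}_i$ and leaving the last range unchanged, giving $n_1 n'_2$ boxes. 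The \emph{inter-layer} family contains one box $\tilde B^{(2)}_{j,l}$ per pair $(j,l)$ with $B^{d-1}_l = \prod_{e \in [d-1]}(a_{l,e},b_{l,e}) \in B^{d-1}$, whose $e$th coordinate range is $(M a_{l,e} + (p^d_j)_e - \eta_1,\; M b_{l,e} + (p^d_j)_e + \eta_1)$ for $e \leq d-1$ and the narrow window $((p^d_j)_d - \eta_2,\, (p^d_j)_d + \eta_2)$ for $e = d$, giving $n'_1 n_2$ boxes.

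The key verification is that for $M$ large and $\eta_1, \eta_2$ small the incidences behave as designed: $q_{i',j} \in \tilde B^{(1)}_{i,k}$ iff $i' = i$ and $p^d_j \in B^d_k$ (for $i' \neq i$, the scaled coordinate difference $M((p^{d-1}_{i'})_e - (p^{d-1}_i)_e)$ swamps the bounded range of $B^d_k$ in at least one coordinate~$e$ where the two points differ), and $q_{i,j'} \in \tilde B^{(2)}_{j,l}$ iff $j' = j$ and $p^{d-1}_i \in B^{d-1}_l$ (the last coordinate window, taken narrower than the minimum gap between $d$-th coordinates of distinct points in $P^d$, forces $j'=j$, after which the $M$-scaling with the $\eta_1$-buffer reduces the condition on the first $d-1$ coordinates to $p^{d-1}_i \in B^{d-1}_l$). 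Summing over the two families yields exactly $n_1 m' + n'_1 m$ incidences.

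For $K_{2,2}$-freeness, suppose distinct points $q_{i_1,j_1}, q_{i_2,j_2}$ both lie in two distinct boxes. If both boxes are Type~1, they force a common layer $i_1 = i_2$, and the pair $(j_1, j_2)$ together with the underlying $B^d$-boxes gives a $K_{2,2}$ in the incidence graph of $(P^d, B^d)$, contradicting hypothesis~(2). If both are Type~2, they force a common $j_1 = j_2$, and we pull back to a $K_{2,2}$ in $(P^{d-1}, B^{d-1})$, contradicting hypothesis~(1). In the mixed case, the Type~1 box pins $i_1 = i_2$ and the Type~2 box pins $j_1 = j_2$, so the two points coincide, a contradiction. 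The main technical obstacle, and the only nontrivial one, is the explicit calibration of the thresholds $M, \eta_1, \eta_2$ against the minimum non-zero coordinate differences in the given configurations; such thresholds exist because everything in sight is finite.
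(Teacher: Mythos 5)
Your construction is correct and is essentially the paper's own proof up to a global dilation: your $M$-scaled layers with translated copies of $(P^d,B^d)$ are the paper's shrunken copies placed in small boxes $\beta_i$ around each $p_i\in P^{d-1}$, and your inter-layer boxes ($M$-scaled $B^{d-1}$-boxes crossed with narrow last-coordinate windows) are the paper's products $c_l\times I_j$. The incidence counts and the three-case $K_{2,2}$-freeness analysis match the paper's argument.
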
 

\begin{proof}
	By Remark \ref{rem: same type}(1) we may assume that for every $1 \leq j \leq d$, all points in $P^d$ have pairwise distinct $j$th coordinates, for every $1 \leq j \leq d-1$ all points in $P^{d-1}$ have pairwise distinct $j$th coordinates, and none of the points is on the border of any of the boxes. Write $P^{d-1}$ as $p_1, \ldots, p_{n_1}$. Let $\bar{r}$ be the tuple listing all corners of all boxes in $B^{d-1}$.
	
	Using this, for each $p_i$ we can choose a very small $(d-1)$-dimensional box $\beta_i$ with $p_i \in \beta_{i}$ and such that: for any choice of points $p'_i \in \beta_i, 1 \leq i \leq n_1$, we have that $(p'_1, \ldots, p'_{n_1})$ has the same order-type as $(p_1, \ldots, p_{n_1})$ over $\bar{r}$. In particular, all the $\beta_i$'s are pairwise disjoint, and the incidence graph between $P^{d-1}$ and $B^{d-1}$ is isomorphic to the incidence graph between $(p'_i, \ldots, p'_{n_1})$ and $B^{d-1}$ by Remark \ref{rem: same type}(2).
	
	Contracting and translating while keeping the $d$th coordinate unchanged, for each $1 \leq i \leq n_1$ we can find a copy $(P^{d}_i, B^d_i)$ of the configuration $(P^d, B^d)$ entirely contained in the box $\beta_i \times \mathbb{R}$, that is:
	\begin{itemize}
		\item all points in $P^{d}_i$ and boxes in $ B^d_i$ are contained in $\beta_i \times \mathbb{R}$;
		\item the incidence graph on $(P^{d}_i, B^d_i)$ is isomorphic to the incidence graph on $(P^{d}, B^d)$;
		\item for all $i$, the $d$th coordinate of every point in $P_i^d$ is the same as the $d$th coordinate of the corresponding point in $P^d$.
	\end{itemize}
Let $P := \bigcup_{1 \leq i \leq n_1} P^d_i$ and $B' := \bigcup_{1 \leq i \leq n_1} B^d_i$, then $|P| = n_1n'_1, |B'| = n_1n'_2$ and there are $n_1m'$ incidences between $P$ and $B'$.

Write $P^d$ as $q_1, \ldots, q_{n'_1}$ and $B^{d-1}$ as $c_1, \ldots, c_{n_2}$. As all of the $d$th coordinates of the points in $P^d$ are pairwise disjoint, for each $1 \leq j \leq n'_1$ we can choose a small interval $I_j \subseteq \mathbb{R}$ with $q_{j,d} \in I_j$, and so that all of the intervals $I_j, 1 \leq j \leq n'_1$ are pairwise disjoint. For each $1 \leq j \leq n'_1$ and $c_l \in B^{d-1}$, we consider the $d$-dimensional box $c_{j,l} :=c_l \times I_j$. Let $B_j := \{c_{j,l} : 1 \leq l \leq n_2 \}$. For each $1 \leq i \leq n_1$ and $1 \leq j \leq n'_1$, $(\beta_i \times \mathbb{R}) \cap (\mathbb{R}^{d-1} \times I_j)$ contains exactly one point $q_{i,j}$ (given by the copy of $q_{j}$ in $P_i^d$), and the projection $q'_{i,j}$ of $q_{i,j}$ onto the first $d-1$ coordinates is in $\beta_i$. Hence the incidence graph between $P$ and $B_j$ is isomorphic to the incidence graph between $P^{d-1}$ and $B^{d-1}$ by the choice of the $\beta_i$'s, in particular the number of incidences is $m$. 

Finally, let $B := B' \cup \bigcup_{1 \leq j \leq n'_1} B_j$, then $|B| = n_1n'_2 + n'_1n_2$. Note that $c_{j,l} \cap c_{j',l'} = \emptyset$ for $j \neq j'$ and any $l,l'$, i.e.~no box in $B_j$ intersects any of the boxes in $B_{j'}$ for $j\neq j'$. It is now not hard to check that the incidence graph between $P$ and $B$ is $K_{2,2}$-free  (by construction and the assumptions of $K_{2,2}$-freeness of $(P^d,B^d)$ and $(P^{d-1}, B^{d-1})$), and that there are $n_1m'+mn'_1$ incidences between $P$ and $B$.
\end{proof}

\begin{rem}\label{rem: wlog dyadic}
	It follows from the proof that if all the boxes in $B^{d-1}$ and $B^d$  are dyadic (see Definition \ref{def: dyadic box}), then we can choose the boxes in $B$ to be dyadic as well.
\end{rem}

\begin{prop}\label{prop: lower bound for boxes}
	For any $\ell \in \mathbb{N}$, there exist a set $P$ of $ \ell^{\ell}$ points and a set  $B$ of $\ell^{\ell}$ dyadic boxes in $\mathbb{R}^2$ such that their incidence graph is $K_{2,2}$-free and the number of incidences is $\ell \ell ^{\ell}$.
	
	In particular, substituting $n := \ell^{\ell}$, this shows that the number of incidences grows as $\Omega \left(n \frac{\log n}{\log \log n} \right)$.
\end{prop}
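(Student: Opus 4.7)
The plan is to construct the configuration by iterating Lemma \ref{lem: stepping up} with $d = 2$, combining a trivial 1D building block with the current 2D configuration at each step. Fix the 1D configuration $(P^1, B^1)$ consisting of $\ell$ distinct points placed inside a single dyadic interval (so $n_1 = \ell$, $n_2 = 1$, $m = \ell$); this is $K_{2,2}$-free vacuously since there is only one interval. Fix the base 2D configuration $(P_0, B_0)$ analogously: $\ell$ distinct points placed inside a single dyadic box (for instance $[0, 2^{\lceil \log_2 \ell \rceil})^2$), giving $\ell$ incidences and being $K_{2,2}$-free for the same reason.

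For each $i \geq 0$, define $(P_{i+1}, B_{i+1})$ by applying Lemma \ref{lem: stepping up} to $(P^1, B^1)$ and $(P_i, B_i)$. The lemma preserves $K_{2,2}$-freeness, and by Remark \ref{rem: wlog dyadic} the newly produced boxes can be taken to be dyadic. Writing $a_i := |P_i|$, $b_i := |B_i|$, and $m_i$ for the number of incidences, the counting formulas provided by the lemma give the recurrences
\[ a_{i+1} = \ell a_i, \qquad b_{i+1} = \ell b_i + a_i, \qquad m_{i+1} = \ell m_i + \ell a_i, \]
with initial data $a_0 = \ell$, $b_0 = 1$, $m_0 = \ell$.

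A straightforward induction yields $a_i = \ell^{i+1}$, $b_i = (i+1)\ell^i$, and $m_i = (i+1)\ell^{i+1}$. Setting $i = \ell - 1$ produces exactly $|P| = \ell^\ell$ points, $|B| = \ell \cdot \ell^{\ell-1} = \ell^\ell$ dyadic boxes, and $\ell \cdot \ell^\ell$ incidences, with the incidence graph still $K_{2,2}$-free. For the asymptotic assertion, setting $n = \ell^\ell$ gives $\log n = \ell \log \ell$ and $\log \log n = \log \ell + \log \log \ell = (1+o(1))\log \ell$, so $\log n / \log \log n = (1+o(1))\ell$ and the number of incidences $\ell \cdot n$ is $\Omega(n \log n / \log \log n)$.

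There is no substantial obstacle beyond confirming the recurrence arithmetic and verifying that both base configurations are $K_{2,2}$-free and dyadic; the preservation of these properties across the iteration is delivered directly by Lemma \ref{lem: stepping up} together with Remark \ref{rem: wlog dyadic}, so the construction is essentially a bookkeeping exercise.
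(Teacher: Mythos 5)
Your proof is correct and follows essentially the same route as the paper: iterating Lemma \ref{lem: stepping up} with the fixed one-dimensional block of $\ell$ points in a single dyadic interval, the only cosmetic difference being that your base case ($\ell$ points in one dyadic box) is exactly what the paper obtains after its first application of the lemma to the degenerate configuration of one point and zero boxes. The recurrence arithmetic and the asymptotic computation with $n = \ell^\ell$ both check out.
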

\begin{proof}
Given $d$, assume that there exist $K_{2,2}$-free `point -- dyadic box' configurations satisfying (1) and (2) in Lemma \ref{lem: stepping up} for some parameters $d, n_1, n_2, n'_1, n'_2, m, m'$.
Then, for any $j \in \mathbb{N}$, we can iterate the lemma $j$ times and find a $K_{2,2}$-free `point -- dyadic box' configuration in $\mathbb{R}^d$ with $n_1^j n'_1$ points, $n_1^j n'_2 + j n_1^{j-1} n'_1 n_2$ dyadic boxes (Remark \ref{rem: wlog dyadic}), and $n_1^j m' + j n_1^{j-1} n'_1 m$ incidences.

In particular, let $d = 2$  and let $\ell$ be arbitrary. We can start with $n_1 = \ell , n_2 = 1, m=\ell$ (one dyadic interval containing $n_1$ points in $\mathbb{R}$) and $n'_1=1, n'_2=0, m' = 0$ (one point and zero dyadic boxes in $\mathbb{R}^2$). Taking $j := \ell$, we then find a $K_{2,2}$-free configuration with $\ell^{\ell}$ points, $\ell^\ell$ dyadic boxes and $\ell \ell^{\ell}$ incidences. Hence for $n := k^k$, we have $n$ points, $n$ boxes and $\Omega \left(n \frac{\log n}{\log \log n} \right)$ incidences.
\end{proof}

\begin{rem}
	We remark that the construction in Lemma \ref{lem: stepping up} cannot produce a $K_{2,2}$-free configuration with more than $O \left( n \frac{\log n}{\log \log n} \right)$ incidences in $\mathbb{R}^d$ for any $d$. 
	
	Indeed, using the ``coordinates'' $\left( \log n'_1, \frac{n'_2}{n'_1}, \frac{m'}{n'_1} \right)$ instead of $(n'_1,n'_2,m')$, where the coordinates correspond to the number of points, boxes and incidences respectively, the lemma says that  if $\left( \log n_1, \frac{n_2}{n_1}, \frac{m}{n_1} \right)$ is attainable in $d-1$ dimensions and $\left( \log n'_1, \frac{n'_2}{n'_1}, \frac{m'}{n'_1} \right)$ is attainable in $d$ dimensions, then $\left( \log n'_1 + \log n_1, \frac{n'_2}{n'_1} + \frac{n_2}{n_1}, \frac{m'}{n'_1} + \frac{m}{n_1} \right)$ is attainable in $d$ dimensions. Thus, one adds the vector $\left(\frac{n_2}{n_1}, \frac{m}{n_1} \right)$ to $\left( \frac{n'_2}{n'_1},\frac{m'}{n'_1} \right)$. We want to maximize the second coordinate of this vector while keeping the first coordinate below $1$, and the optimal way to do it essentially is to add $n_1$ times the vector $\left(\frac{1}{n_1},1 \right)$, which increases $\log n'_1$ by $n_1 \log n_1$ and gives the $\frac{\log n}{\log \log n}$ lower bound. 
	
	We thus ask whether in the `point-box' incidence bound in $\mathbb{R}^d$ the power of $\log n$ has to grow with the dimension $d$ (see Problem \ref{prob: log power}).
\end{rem}

\section{Dyadic rectangles}\label{sec: dyadic rects}

In this section we strengthen the bound on the number of incidences with rectangles on the plane with axis-parallel sides given by Corollary \ref{cor: inc with boxes}, i.e.,~$O_{k} \left( n \log^{4} n \right)$, in the special case of \emph{dyadic} rectangles, using a different argument (which relies on a certain partial order specific to the dyadic case).

\subsection{Locally $d$-linear orders}

Throughout this section, let $(P, \leq)$ be a partially ordered set of size at most $n_1$, and let $L$ be a collection of subsets of $P$ (possibly with repetitions) of size at most $n_2$. As before, we let $n = n_1 + n_2$.

\begin{defn}
We say that a set $S \subseteq P$ is \emph{$d$-linear} if it contains no antichains of size greater than $d$, and $(P, \leq)$ is \emph{locally $d$-linear} if any interval $[a,b] = \{ x \in P : a \leq x \leq b \}$ is $d$-linear.
\end{defn}

Note that $d$-linearity is preserved under removing points from $P$.

\begin{defn}
The collection $L$ is said to be a \emph{$K_{k,k}$-free arrangement} if for any $a_1 \neq  \ldots \neq  a_k \in P$, there are at most $k-1$ sets from $L$ containing all of them simultaneously.
\end{defn}

Observe that if one removes any number of points from $P$, or removes any number of sets from $L$, one still obtains a $K_{k,k}$-free  arrangement. We now state the main theorem of this section.

\begin{thm}\label{main}  Suppose $(P,<)$ is locally $d$-linear, and $L$ is a $K_{k,k}$-free arrangement of $d$-linear subsets of $P$.  Then
$$\sum_{\ell \in L} |\ell| = O_{d,k}\left( n \frac{\log(100+n_1)}{\log \log(100+n_1)} \right)$$.
\end{thm}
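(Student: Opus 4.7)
The plan is to prove the bound by induction on $n_1 = |P|$, with a divide-and-conquer recursion whose depth is $\Theta(\log n_1 / \log\log n_1)$. First I would reduce to the case where each $\ell \in L$ is a chain in $P$: by Dilworth's theorem, every $d$-linear subset of $P$ decomposes into at most $d$ chains, so this reduction preserves $\sum|\ell|$ and weakens the $K_{k,k}$-free condition only to $K_{k,dk}$-freeness (since any $K_{k,k}$ among the chain-pieces must come from at most $k$ original sets), which is absorbed into the constants depending on $d$ and $k$.

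Next I would set up the induction with hypothesis $\sum_\ell |\ell| \leq C(n_1 + n_2) \log(100+n_1)/\log\log(100+n_1)$, for $C = C(d,k)$ chosen later, with a trivial base case for small $n_1$. The inductive step relies on constructing a \emph{separator} $S \subseteq P$ satisfying (a) $\sum_{\ell \in L} |\ell \cap S| = O(n)$, and (b) the connected components of the Hasse diagram of $P \setminus S$ all have size at most $n_1 / \log n_1$. Local $d$-linearity of $P$ is crucial here: inside any interval $[a,b] \subseteq P$, the width is at most $d$, so Dilworth-style chain decompositions apply within intervals and can be stitched together to produce a small global separator. The $K_{k,k}$-free condition is then used to ensure (a), via a K\H{o}v\'ari--S\'os--Tur\'an style count on the bipartite graph between $S$ (viewed as $O(d)$ chains) and $L$.

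Once $S$ is fixed, each chain $\ell \in L$ has its $S$-portion counted in the separator cost, while the remaining portion of $\ell$ lies in essentially one component of $P \setminus S$. Applying the inductive hypothesis to each component and summing bounds the recursive contribution by $C(n_1+n_2)\log(100 + n_1/\log n_1)/\log\log(100 + n_1/\log n_1)$, which is smaller than the target $C(n_1+n_2)\log(100+n_1)/\log\log(100+n_1)$ by an additive amount on the order of $(n_1+n_2)$. For $C$ chosen sufficiently large, this gap absorbs the $O(n)$ separator cost, closing the induction. The recursion bottoms out after $\Theta(\log n_1 / \log\log n_1)$ levels, each contributing $O(n)$, yielding the stated bound.

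The main obstacle is the separator construction, which must shrink $P$ by a factor of $\log n_1$ rather than by a mere constant as in standard centroid decomposition --- this factor is precisely what upgrades the naive $n \log n$ bound to $n \log n / \log\log n$. For a tree ($d=1$), the separator can be obtained by iterating centroid decomposition $\Theta(\log\log n_1)$ times, or directly as a balanced chain separator. For general locally $d$-linear posets the construction requires more care: one needs to combine Dilworth's theorem applied inside intervals with a global level structure, and must carefully ensure that each chain $\ell \in L$ is not over-split across many components, as any such blow-up factor in the recurrence would destroy the $\log/\log\log$ bound.
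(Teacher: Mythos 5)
Your proposal hinges on a separator lemma that is neither proved nor provable by the tools you invoke, and that lemma is where the whole proof lives. Property (a), that $\sum_{\ell\in L}|\ell\cap S|=O(n)$, is essentially the theorem in miniature: the K\H{o}v\'ari--S\'os--Tur\'an count for a $K_{k,k}$-free bipartite graph between $S$ and $L$ gives only $O(|S|\,|L|^{1-1/k}+|L|)$, which is superlinear, and viewing $S$ as $O(d)$ chains does not help, because after your Dilworth reduction the traces $\ell\cap S$ on such a chain are arbitrary subsets of a linearly ordered set, and an arbitrary abstract $K_{k,k}$-free set system can be realized on a linearly ordered ground set; no purely KST-style count can then yield $O(n)$. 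Some additional structure tying the sets to the order is indispensable here (the paper's counting lemma exploits exactly this: a point that is not ``low'' on $\ell$ sits above a saturated chain of $k-1$ parent--child steps inside $\ell$, there are at most $m^{k-1}$ such chains once every $(k-1)$-descendant has at most $m$ children, and each such chain is shared by at most $k-1$ sets), and your outline never isolates it. Second, even granting a separator, your recursion does not close: the elements of $\ell\setminus S$ need not be Hasse-connected in $P\setminus S$, so a single $\ell$ can meet many components; the inductive hypothesis charges each component $C(\mathrm{points}+\mathrm{sets})$ times the log factor, so the total ``sets'' weight summed over components can far exceed $n_2$, and the additive $O(n)$ slack per level of your recursion cannot absorb such a blow-up. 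You flag both issues yourself but resolve neither; they are the two central difficulties, not technicalities.

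For comparison, the paper avoids divide-and-conquer altogether. It runs an iterated pruning $P=P_0\supset P_1\supset\dots\supset P_t$, where $P_{i+1}$ retains only those points of $P_i$ having a $(k-1)$-descendant with more than $m$ children in $(P_i,<)$; a counting lemma bounds the incidences lost at each step by $d(k-1)|L|+(k-1)m^{k-1}\left(|P_i|-|P_{i+1}|\right)$; and a growth claim, using local $d$-linearity, shows that every point surviving $t$ rounds has at least $m^t/(kd^k)^{t-1}$ distinct descendants in $P$, so choosing $m$ of order $\left(\log n_1/\log\log n_1\right)^{1/(k-1)}$ and $t$ of order $\log n_1/\log\log n_1$ forces $P_t=\emptyset$, and telescoping gives the bound. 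If you want to salvage your route, you would need a separator whose incidence cost is controlled by precisely this kind of parent--child counting, together with a mechanism preventing a single $\ell$ from being charged in many subproblems; at that point you would essentially have reconstructed the paper's lemma.
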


To prove Theorem~\ref{main}, we first need some definitions and a lemma. If $x \in P$, define a \emph{parent} of $x$ to be an element $y \in P$ with $y>x$ and no element between $x$ and $y$, and similarly define a \emph{child} of $x$ to be an element $z \in P$ with $z<x$ and no element between $z$ and $x$. We say that $z$ is a \emph{strict $t$-descendant} of $x$ if there are some elements $z_0 = x > z_1 > \ldots > z_{t} = z$ such that $z_{i+1}$ is a child of $z_i$, and that $z$ is a \emph{$t$-descendant} of $x$ if it is a strict $s$-descendant for some $0 \leq s \leq t$. 

\begin{lem}  Fix $d,k \in \mathbb{N}$. Let $L$ be a $K_{k,k}$-free arrangement of $d$-linear subsets of $P$, and let $m> 0$.  Let $P'$ denote the set of all elements in $P$ which have a $(k-1)$-descendant with more than $m$ children.  Then
$$\sum_{\ell \in L} |\ell| \leq \sum_{\ell \in L} |\ell \cap P'| + d(k-1)|L| + (k-1) m^{k-1} (|P| - |P'|).$$
\end{lem}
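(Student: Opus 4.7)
The natural plan is to decompose $\sum_{\ell} |\ell|$ according to whether each $x \in \ell$ lies in $P'$ and, if not, to stratify by the depth of $x$ inside $\ell$. The contribution from pairs $(\ell, x)$ with $x \in \ell \cap P'$ is exactly the first term $\sum_\ell |\ell \cap P'|$, so it suffices to bound $\sum_\ell |\ell \setminus P'|$ by $d(k-1)|L| + (k-1)m^{k-1}(|P|-|P'|)$.

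Fix $\ell \in L$ and define the \emph{rank} $r_\ell(x)$ of each $x \in \ell$ to be the length of the longest strictly descending chain inside $\ell$ starting at $x$. A standard check shows that if $x, x' \in \ell$ satisfy $r_\ell(x) = r_\ell(x')$ then they must be incomparable (otherwise the larger element would admit a strictly longer descending chain), so the rank-$r$ stratum is an antichain in $\ell$ and hence has size at most $d$ by the $d$-linearity of $\ell$. Call $x$ \emph{shallow in $\ell$} if $r_\ell(x) \leq k-2$ and \emph{deep} otherwise; then each $\ell$ has at most $d(k-1)$ shallow elements, contributing at most $d(k-1)|L|$ in total.

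It remains to bound the number of deep pairs $(\ell, x)$ with $x \in \ell \setminus P'$ by $(k-1)m^{k-1}(|P|-|P'|)$. For each such pair, $\ell$ contains a descending chain $x = x_0 > x_1 > \dotsb > x_{k-1}$ in $\ell$, and the $K_{k,k}$-free hypothesis says at most $k-1$ sets of $L$ contain any fixed such $k$-tuple. The plan is to associate to each deep pair a canonical child-chain of length $k-1$ in $P$ beginning at $x$. Since $x \notin P'$, every $(k-1)$-descendant of $x$ has at most $m$ children in $P$, so there are at most $m^{k-1}$ such child-chains starting at $x$; combining this with the factor of $k-1$ coming from $K_{k,k}$-freeness and summing over $x \in P \setminus P'$ yields the required bound.

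The expected main obstacle is the passage from a general chain inside $\ell$ (which is what the deep condition gives us) to a chain inside $P$ whose elements all lie in $\ell$ and are constrained by the bounded-branching hypothesis (which is what the $m^{k-1}$ counting requires): a descending chain in $\ell$ need not consist of $P$-children of each other, so a careful canonical choice is needed---for instance, greedily refining an $\ell$-chain into a $P$-child-chain and tracking where it re-enters $\ell$, or picking the lexicographically-smallest witness---to make the correspondence with child-chains both well-defined and injective enough for the $K_{k,k}$-bound to apply. I expect this bridging step to be the only part requiring genuine work; the rest is routine bookkeeping.
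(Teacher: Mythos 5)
Your decomposition is exactly the paper's: split incidences according to $P'$, bound the "shallow" (the paper's "low") points by $d(k-1)$ per set via antichains of minimal elements, and count the remaining incidences by assigning to each deep pair a descending chain of $k-1$ consecutive children of $x$ lying in $\ell$, then using $x\notin P'$ for the $m^{k-1}$ factor and $K_{k,k}$-freeness for the $k-1$ factor. The problem is that the step you defer as "the only part requiring genuine work" is precisely the heart of the lemma, and neither of your sketched repairs works as stated. If you refine an $\ell$-chain to a $P$-child-chain you will in general leave $\ell$, and once the witness tuple is not contained in $\ell$, $K_{k,k}$-freeness gives no bound whatsoever on how many sets $\ell$ get assigned to the same tuple, so no "canonical" or "lexicographically smallest" choice rescues the count. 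In fact no purely combinatorial bridging argument can exist from the stated hypotheses alone: take $k=2$, $d=1$, $m=1$, let $P$ consist of maximal elements $x_1,\dots,x_N$ all covering a single element $y$, with $y$ covering $z$ and $z$ covering minimal elements $u_1,\dots,u_N$, and let $L=\bigl\{\{x_j,u_i\}: 1\le i,j\le N\bigr\}$. Every set is a $2$-chain, the arrangement is $K_{2,2}$-free, and $P'=\{y,z\}$, yet $\sum_{\ell}|\ell|=2N^2$ while the right-hand side equals $0+N^2+2N$, which is smaller for $N\ge 3$; here each $x_j$ is deep in each set containing it but has no $P$-child inside that set.

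What actually makes the deep-incidence count go through --- and what the paper's proof uses silently when it asserts that a non-low $x$ admits $z_1<\dots<z_{k-1}<x$ in $\ell$ with each a child of the next --- is an extra property of the sets $\ell$ in the intended application: each $\ell$ is convex in $(P,\le)$, i.e.\ $a,b\in\ell$ and $a\le c\le b$ imply $c\in\ell$. This holds for the sets $\ell_p$ of dyadic rectangles containing a fixed point $p$ (if two comparable rectangles contain $p$, so does every rectangle between them) and is inherited by the subposets $P_i$ in the iteration. Under convexity your bridging step is immediate: given $x>w_1>\dots>w_{k-1}$ in $\ell$, a maximal chain of $P$ inside $[w_{k-1},x]$ consists of consecutive covers of $P$, lies in $\ell$, and its top $k-1$ elements below $x$ form the required child-chain; then $x\notin P'$ gives at most $m^{k-1}$ such chains and $K_{k,k}$-freeness at most $k-1$ sets per chain, yielding $(k-1)m^{k-1}(|P|-|P'|)$. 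So to complete your proof you must either import such a convexity (or equivalent) hypothesis and verify it where the lemma is applied, or supply a genuinely different argument; as written, the key counting step is missing, and the obstacle you flagged is real rather than routine.
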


\begin{proof}  Let $P'' := P \backslash P'$ denote the set of elements $x \in P$ such that every $(k-1)$-descendant of $x$ has at most $m$ children.  Then we can rearrange the desired inequality as
$$ \sum_{\ell \in L} |\ell \cap P''| \leq d(k-1)|L| + (k-1)m^{k-1} |P''|.$$
The quantity $\sum_{\ell \in L} |\ell \cap P''|$ is counting incidences $(x,\ell)$ where $\ell \in L$ and $x \in P'' \cap \ell$.  

Given $\ell \in L$, call a point $x \in \ell$ low if $x$ has no descending chain of length $k-1$ under it in $\ell$. Every $\ell$ can contain at most $d(k-1)$ low points. Indeed, as $\ell$ is $d$-linear, it has at most $d$ minimal elements. Removing them, we obtain a $d$-linear set $\ell_1 \subseteq \ell$ such that every point in it contains an element under it in $\ell$, and $\ell_1$ itself has at most $d$ minimal elements. Remove them to obtain a $d$-linear set $\ell_2 \subseteq \ell_1$ such that each point in it contains a descending chain of length $2$ under it in $\ell$, etc.

Hence each $\ell \in L$ contributes at most $d(k-1)$ incidences with its low points, giving a total contribution of at most $d(k-1)|L|$ to the sum.  If $x$ is not a low point on $\ell$, then there are some $z_1 < \ldots < z_{k-1} < x$ in $\ell$, with each one a child of the next one.  As $L$ is a $K_{k,k}$-free arrangement, among the sets $\ell \in L$ there are at most $k-1$ containing all these points.  By definition of $P''$, for each $x \in P''$ there are at most $m^{k-1}$ choices for such tuples $(z_1, \ldots, z_{k-1})$. Hence $x$ is incident to at most $(k-1)m^{k-1}$ sets $\ell \in L$ for which it is not low, and the total number of contributions of incidences in this case is at most $(k-1) m^{k-1} |P''|$, so the claim follows.
\end{proof}

Now we prove Theorem \ref{main}.  Let $t$ be a natural number to be chosen later, and $m>0$ be another parameter to be chosen later.  Define the subsets
$$ P = P_0 \supset P_1 \supset \dots \supset P_t$$
of $P$ by defining $P_0 := P$, and for each $i=0,\dots,t-1$, defining $P_{i+1}$ to be the set of points in $P_i$ that have a $(k-1)$-descendant with more than $m$ children in $(P_i, <)$.   By the above lemma, we have
$$\sum_{\ell \in L} |\ell \cap P_i| \leq \sum_{\ell \in L} |\ell \cap P_{i+1}| + d(k-1)|L| + (k-1)m^{k-1} (|P_i| - |P_{i+1}|)$$
for all $i=0,\dots,t-1$, and hence on telescoping
$$\sum_{\ell \in L} |\ell| \leq \sum_{\ell \in L} |\ell \cap P_t| + d(k-1) t |L| + (k-1) m^{k-1} n_1.$$

\begin{claim}
Let $x$ be a point in $P_t$. Then it has at least $ \frac{m^t}{(k d^k)^{t-1}}$ distinct descendants in $P$.
\end{claim}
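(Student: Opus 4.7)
The proof will proceed by induction on $t$. The base case $t=1$ is immediate from the definition of $P_1$: if $x \in P_1$ then some $(k-1)$-descendant $y$ of $x$ in $P$ has more than $m$ children in $P$, and these $\geq m+1$ children are all strict descendants of $x$ in $P$, matching the required lower bound $m^1/(kd^k)^0 = m$.

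For the inductive step with $t \geq 2$, the definition of $P_t$ supplies a $(k-1)$-descendant $y \in P_{t-1}$ of $x$ together with $m+1$ distinct children $c_1,\dots,c_{m+1}$ of $y$ in $(P_{t-1},<)$. Since each $c_j \in P_{t-1}$, the inductive hypothesis produces a set $D_j$ of at least $m^{t-1}/(kd^k)^{t-2}$ descendants of $c_j$ in $P$, and every element of $\bigcup_j D_j$ is a descendant of $x$ in $P$ because $c_j < y \leq x$. The remaining task is to bound the overlap between the $D_j$.

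The core step is to show that each fixed $w \in \bigcup_j D_j$ belongs to at most $d$ of the sets $D_j$. First, the children $c_1,\dots,c_{m+1}$ are pairwise incomparable in $P$: any comparability $c_j < c_{j'}$ in $P$ would restrict to one in $P_{t-1}$, and together with $c_{j'} < y$ in $P_{t-1}$ would put $c_{j'} \in P_{t-1}$ strictly between $c_j$ and $y$, contradicting $c_j$ being a child of $y$ in $(P_{t-1},<)$. Consequently the $c_j$ with $w < c_j$ form an antichain in the interval $[w,y]$ of $P$; by local $d$-linearity this antichain has size at most $d$. Combining, $\bigl|\bigcup_j D_j\bigr| \geq \tfrac{m+1}{d}\cdot\tfrac{m^{t-1}}{(kd^k)^{t-2}} \geq \tfrac{m^t}{(kd^k)^{t-1}}$, where the final inequality uses $kd^{k-1} \geq 1$.

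The main subtlety is the simultaneous bookkeeping of child/descendant relations across three closely related but distinct structures: the original poset $P$, the restricted poset $P_{t-1}$, and further restrictions appearing in the recursion. Children of $y$ in $(P_{t-1},<)$ are generally not children in $(P,<)$, and descendants of $c_j$ in $P$ need not lie in $P_{t-1}$. The argument goes through because comparability is preserved under restriction, so the easily-established incomparability of the $c_j$ in $P_{t-1}$ lifts back to $P$, where the local $d$-linearity hypothesis can be applied to the interval $[w,y]$.
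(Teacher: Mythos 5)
Your proof is correct, but it takes a genuinely different route from the paper's. The paper argues by a \emph{reverse} induction inside the claim for the fixed $t$: starting from the set $S_{t-1}\subseteq P_{t-1}$ of children of a single $(k-1)$-descendant of $x$, it constructs sets $S_i\subseteq P_i$ of descendants of $x$ for $i=t-1,\dots,0$ with $|S_{i-1}|\geq |S_i|\,m/(kd^k)$; each level costs a factor $k$ (a pigeonhole fixing a common descent depth $s\leq k-1$ for a large subset of $S_i$), a factor $d^{k-1}$ (following the length-$s$ paths down, each step losing at most $d$ by the antichain-of-parents argument), and a final factor $d$ (each new child having at most $d$ parents). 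You instead induct on the index $t$ itself, apply the induction hypothesis to each of the $m+1$ children $c_j\in P_{t-1}$, and control the overlap of the resulting descendant sets $D_j$: the $c_j$ are pairwise incomparable (incomparability established in $(P_{t-1},<)$ lifts to $(P,<)$ since the order is a restriction), so $\{c_j : w\le c_j\}$ is an antichain in the interval $[w,y]$ and has size at most $d$ by local $d$-linearity. This avoids the pigeonhole over descent depths and the path-following entirely, loses only a factor $d$ per level instead of $kd^k$, and in fact proves the stronger bound $m^t/d^{t-1}$, from which the stated bound follows as you note since $kd^{k-1}\ge 1$. Two small points worth making explicit: your induction uses the claim at index $t-1$, so you are really proving the statement for every $P_i$ simultaneously (harmless, since the $P_i$ are defined uniformly and independently of the final choice of $t$); and a point $w$ with $w=c_j$ lies in only that one $D_j$ by incomparability, so the multiplicity bound $d$ holds in all cases. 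Your sharper constant would allow a slightly smaller $m$ in the proof of the main theorem, though it does not change the final asymptotics.
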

\begin{proof}
By definition of $P_t$ there is some $(k-1)$-descendant $x' \in P_{t-1}$ of $x$ which has at least $ m$ children in $P_{t-1}$. Let $S_{t-1} \subseteq P_{t-1}$ denote the set of children of $x'$, so $|S_{t-1}| \geq m$.
By reverse induction for $i= t-1, t-2, \ldots, 0$ we choose sets $S_{i} \subseteq P_i$ of descendants of $x$ so that $|S_{i-1}| \geq \frac{|S_i| m }{k d^k}$. Then $|S_0| \geq  \frac{ m^t}{(k d^k)^{t-1}} $, as wanted.

Let $S_i$ be given. By definition of $P_{i}$ and pigeonhole principle, there is some $0 \leq s \leq k-1$ and $S'_i \subseteq S_i$ such that $|S'_i| \geq \frac{|S_i|}{k}$ and  every $y \in S'_i$ has a strict $s$-descendant $z_y \in P_{i-1}$ with at least $m$ children in $P_{i-1}$.
Fix a path $I_y$ of length $s$ connecting $y$ to $z_y$, and for $0 \leq r \leq s$ let $z^r_y$ denote the $r$th element on the path $I_y$ (so $z_y^0 = y $, $z_y^s = z_y$ and $z_y^{r+1}$ is a child of $z_y^r$). Let $I^r := \{ z^r_y :  y \in S'_i\}$, so $I^0 = S'_i$. Then $|I^{r+1}| \geq \frac{|I^r|}{d}$ (otherwise there is some element $z \in I^{r+1}$ which has at least $d+1$ different parents in $I^r$, which would then form an antichain of size $d+1$ contradicting local $d$-linearity of $P$). 
Hence 
$$|I^s| \geq \frac{|I^0|}{d^{s}}  \geq \frac{|S'_i|}{d^{k-1}} \geq \frac{|S_i|}{k d^{k-1}}.$$ Now by hypothesis 
every element in $I^s$ has at least $m$ children in $P_{i-1}$, denote the set of all the children of the elements in $I^s$ by $S_{i-1} \subseteq P_{i-1}$. Then, again by $d$-linearity, $|S_{i-1}| \geq \frac{|I^s| m}{d} \geq \frac{|S_i| m }{k d^k}$.
\end{proof}

Thus if we choose $m, t$ so that
$$ \left( \frac{m}{kd^k} \right)^t > n_1$$
then we will get a contradiction, unless $P_t$ is empty.  We conclude, for such $m$ and $t$, that
$$\sum_{\ell \in L} |\ell| \leq  d(k-1) t |L| + (k-1) m^{k-1} n_1.$$
If we take $m := \left( \frac{c \log(100+n_1)}{\log \log (100+n_1)} \right)^{\frac{1}{k-1}}$ and $t$ to be the integer part of $\frac{c \log(100+n_1)}{\log \log (100+n_1)}$, and assume that $c$ is sufficiently large relatively to $k$ and $d$, then the claim follows.

\subsection{Reduction for dyadic rectangles}

\begin{defn}\label{def: dyadic box}
	\begin{enumerate}
		\item Define a \emph{dyadic interval} to be a half-open interval $I$ of the form $I = [s2^t, (s+1)2^t)$ for integers $s,t$; we use $|I| = 2^t$ to denote the length of such an interval.
		\item Define a \emph{dyadic box} in $\mathbb{R}^d$ (\emph{dyadic rectangle} when $d=2$) to be a product $I_1 \times \ldots \times I_d$ of dyadic intervals.
	\end{enumerate}
\end{defn}

  Note that if two dyadic intervals intersect, then one must be contained in the other. 

\begin{thm}\label{thm: upper bound for dyadic rects}  Fix $k \in \mathbb{N}$. Assume we have a collection $P$ of $n_1$ points in $\RR^2$ and a collection $R$ of $n_2$ dyadic rectangles in $\RR^2$, 
 with the property that the incidence graph contains no $K_{k,k}$, and $n = n_1 + n_2$.  Then the number of incidences $(p,I \times J)$ with $p \in P$ and $p \in I \times J \in R$ is at most \[ O_k \left( n \frac{\log(100+n_1)}{\log\log(100+n_1)} \right). \]
\end{thm}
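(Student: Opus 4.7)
The plan is to deduce Theorem~\ref{thm: upper bound for dyadic rects} from the abstract result Theorem~\ref{main} by equipping $R$ with a suitable partial order and letting each point determine a subset of $R$. Concretely, I would take the underlying poset to be the set $R$ of dyadic rectangles equipped with the containment order $\subseteq$. The fundamental nesting property of dyadic intervals --- any two are either disjoint or comparable under containment --- guarantees that for any $r \subseteq r'$ in $R$, the interval $[r, r']$ in the containment poset embeds into the product of the chain of dyadic intervals between $I$ and $I'$ and the chain between $J$ and $J'$. Consequently, antichains in any such interval have size at most $2$, so $(R, \subseteq)$ is locally $2$-linear.

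Second, for each point $p \in P$, I would associate a subset $\ell_p \subseteq R$ capturing the rectangles containing $p$. The naive choice $\ell_p = \{r \in R : p \in r\}$ is unfortunately not $d$-linear in general: a ``staircase'' of rectangles with one coordinate projection nested inward while the other is nested outward forms an arbitrarily long antichain in $(R, \subseteq)$, all of whose rectangles still contain $p$. This is where the ``partial order specific to the dyadic case'' promised in the introduction must enter --- the construction of $\ell_p$, or equivalently a refinement of the partial order using the rigidity of the dyadic tree structure, must be chosen so that each $\ell_p$ is $d$-linear for some constant $d = d(k)$, while $\sum_{p} |\ell_p|$ still tracks the incidence count up to a constant factor.

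The $K_{k,k}$-free arrangement condition on $L := \{\ell_p : p \in P\}$ transfers directly from the $K_{k,k}$-freeness of the original bipartite incidence graph: $k$ rectangles common to the $\ell_{p_i}$ for $k$ distinct points $p_1, \dots, p_k$ would constitute exactly the forbidden $K_{k,k}$. With this in hand, applying Theorem~\ref{main} with $d = d(k)$ to the poset $(R, \subseteq)$ and the collection $L$ gives the bound $\sum_{p \in P} |\ell_p| = O_{d,k}(n \log(100 + n_1)/\log\log(100 + n_1))$, which is the incidence count up to constants. To ensure the logarithmic factor matches $\log(100 + n_1)$ rather than $\log(100 + n_2)$, it may be necessary to dualize the construction and set up the poset on an object whose size is governed by $n_1$ rather than $n_2$.

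The main obstacle is the second step: engineering the refined subsets $\ell_p$ (equivalently, the dyadic-specific partial order) so that each is $d$-linear for a constant $d = d(k)$, while preserving the incidence count up to multiplicative constants. Turning the unavoidable staircase configurations inherent to dyadic-rectangle incidence structures into chain-like ones --- leveraging the tree structure of the dyadic interval nesting without sacrificing the $K_{k,k}$-freeness --- is the central technical novelty that separates the dyadic case from the general case handled by Corollary~\ref{cor: inc with boxes} and that enables shaving the $\log^4 n$ factor down to $\log n / \log\log n$.
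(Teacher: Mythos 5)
There is a genuine gap, and in fact you have located it yourself: you never construct the ``dyadic-specific partial order,'' and without it the argument does not close. Moreover, your first step is already incorrect. The containment order on $R$ is \emph{not} locally $2$-linear: an interval $[r,r']$ does embed into a product of two chains of dyadic intervals, but a product of two chains of length $\ell$ contains antichains of size $\ell$ (the staircase $(i,\ell-i)$), not $2$. The very staircase configurations you flag as breaking the $d$-linearity of $\ell_p$ also sit inside intervals of the containment poset (bounded below by $I_1\times J_m$ and above by $I_m\times J_1$, say), so they break local $d$-linearity as well. Thus neither hypothesis of Theorem~\ref{main} is verified by your setup.

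The missing idea in the paper is to order $R$ by $I\times J\leq I'\times J'$ iff $I\subseteq I'$ \emph{and} $J\supseteq J'$, i.e.\ containment in the first coordinate and \emph{reverse} containment in the second. Under this order (using that intersecting dyadic intervals are nested), staircases become chains, and antichains --- both those inside an interval $[a,b]$ and those inside the naive $\ell_p=\{r\in R: p\in r\}$ --- correspond exactly to nested sequences $D_1\subseteq\dots\subseteq D_k$ of rectangles under ordinary containment. These nested sequences are then killed by a preprocessing step that you also do not have: by $K_{k,k}$-freeness the innermost rectangle of a nested $k$-chain contains at most $k-1$ points, so repeatedly deleting such rectangles costs only $O_k(n_2)$ incidences and leaves a family with no nested $k$-chain (and, after another factor of $k-1$, no repetitions, which is needed for antisymmetry). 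After this reduction the mixed order is locally $(k-1)$-linear and each $\ell_p$ is $(k-1)$-linear, and Theorem~\ref{main} applies with $d=k-1$; no refinement of the sets $\ell_p$ is needed. So the framework you describe (poset on $R$, sets $\ell_p$, transfer of $K_{k,k}$-freeness) is the right one, but the two concrete devices that make it work --- the mixed order and the removal of nested chains --- are absent, and the one concrete claim you do make about the order is false.
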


\begin{proof}  Suppose that we have some nested dyadic rectangles $D_1 \supseteq D_2 \supseteq \ldots \supseteq D_k$ in $R$. As the incidence graph is $K_{k,k}$-free by hypothesis,  $D_k$ may contain at most $(k-1)$ points from $P$. Removing all such rectangles repeatedly we loose only $(k-1) n_2$ incidences, and thus may assume that any nested sequence in $R$ is of length at most $k-1$. In particular, any rectangle can be repeated at most $k-1$ times in $R$. Then, possibly increasing the number of incidences by a multiple $(k-1)$, we may assume that there are no repetitions in $R$.

We now define a relation $\leq$ on $R$ by declaring $I \times J \leq I' \times J'$ if $I \subseteq I'$ and $J \supseteq J'$. This is a locally $(k-1)$-linear partial order (by the previous paragraph: antisymmetry holds as there are no repetitions in $R$, and using that all rectangles are dyadic, any antichain of size $k$ inside an interval would give a nested sequence of rectangles of length~$k$).

For each point $p$ in $P$, let $\ell_p$ be a subset of $R$ consisting of all those rectangles in $R$ that contain $p$; then $\ell_p$
 is a $(k-1)$-linear set (again, any antichain gives a nested sequence of rectangles of the same length). Finally, $p \in R \iff R \in \ell_p$, hence the collection $\{ \ell_p : p \in P \}$ is a $K_{k,k}$-free arrangement and the claim now follows from Theorem \ref{main} with $d := k-1$.
\end{proof}

\begin{rem}\label{rem: good incidences}
  For a non-dyadic rectangle $R$, let $0.99 R$ denote the rectangle with the same center as R, but whose lengths and heights have been shrunk by a factor of $0.99$.  Define a ``good incidence'' to be a pair $(p,R)$ where $p$ is a point lying in $0.99 R$, not just in $R$.  Then the dyadic bound in Theorem \ref{thm: upper bound for dyadic rects} implies that for a family of arbitrary (not necessarily dyadic) rectangles with no $K_{k,k}$'s, one still gets the $O \left(\frac{n \log n}{ \log \log n} \right)$-type bound for the number of good incidences.  
  
  The reason is as follows.  First we can randomly translate and dilate (non-isotropically, with the horizontal and vertical coordinates dilated separately) the configuration of points and rectangles by some translation parameter and a pair of dilation parameters $(s,t)$ for each of the coordinates. While there is no invariant probability measure on the space of dilatations, one can for instance pick a large number $N$ (much larger than the number of points and rectangles, etc.), dilate horizontally by a random dilation between $1/N$ and $N$ (using say the $dt/t$ Haar measure) making (with positive probability) the horizontal side length close to a power of two; then a vertical dilation will achieve a similar effect for the vertical side length; and then translate by a random amount in $[-N,N]^2$ (chosen uniformly at random) placing the rectangle very close to a dyadic one with positive probability.  If $R$ is a rectangle that is randomly dilated and translated this way, then with probability $>10^{-10}$, there will be a dyadic rectangle $R'$ stuck between $R$ and $0.99 R$.  If the original rectangles have no $K_{k,k}$, then neither will these new dyadic rectangles.  The expected number of incidences amongst the dyadic rectangles is at least $10^{-10}$ times the number of good incidences amongst the original rectangles.  Hence any incidence bound we get on dyadic rectangles implies the corresponding bound for good incidences for non-dyadic rectangles (losing a factor of $10^{10}$).
\end{rem}

\section{A connection to model-theoretic linearity}\label{sec: loc mod}
In this section we obtain a stronger bound in Theorem \ref{thm:main} (without the logarithmic factor) under a stronger assumption that the whole semilinear relation $X$ is $K_{k, \ldots,k}$-free (Corollary \ref{cor: semilin wo log}). And we show that if this stronger bound doesn't hold for a given semialgebraic relation, then the field operations can be recovered from this relation (see Corollary \ref{cor: rec mult from failed zar} for the precise statement). These results are deduced in Section \ref{sec: appl of loc mod} from a more general model-theoretic theorem proved in Section \ref{sec: loc mod gen}.

\subsection{Main theorem}\label{sec: loc mod gen}

We recall some standard model-theoretic notation and definitions, and refer to \cite{marker2006model} for a general introduction to model theory, and to \cite{berenstein2012weakly} for further details on geometric structures.

Recall that $\acl$ denotes the algebraic closure operator, i.e.~if $\mathcal{M} = (M, \ldots)$  is a first-order structure, $A \subseteq M$ and $a$ is a finite tuple in $M$, then $a \in \acl(A)$ if it belongs to some finite $A$-definable subset of $M^{|a|}$ (this generalizes linear span in vector spaces and algebraic closure in fields). Throughout this section we follow the standard model theoretic notation: depending on the context, writing $BC$ denotes either the union of two subsets $B,C$ of $M$, or the tuple obtained by concatenating the (possibly infinite) tuples $B,C$  of elements of $M$.

\begin{defn}
A complete first-order theory $T$ in a language $\mathcal{L}$ is \emph{geometric} if for any model $\mathcal{M} = (M, \ldots) \models T$ we have the following.
\begin{enumerate}
	\item The algebraic closure in $\mathcal{M}$ satisfies the \emph{Exchange Principle}:
	
	\noindent if $a,b$ are singletons in $\mathcal{M}$, $A \subseteq M$ and $b \in \acl(A,a) \setminus \acl(A)$, then $a \in \acl (A,b)$.
	\item $T$ \emph{eliminates $\exists^{\infty}$ quantifier}:
	
	\noindent for every $\mathcal{L}$-formula $\varphi(x,y)$ with $x$ a single variable and $y$ a tuple of variables there exists some $k \in \mathbb{N}$ such that for every $b \in M^{|y|}$, if $\varphi(x,b)$ has more than $k$ solutions in $M$, then it has infinitely many solutions in $M$.
\end{enumerate}  
\end{defn}
 In models of a geometric theory, the algebraic closure operator $\acl$ gives rise to a matroid, and given $a$ a finite tuple in $M$ and $A \subseteq M$, $\dim(a/A)$ is the minimal cardinality of a subtuple $a'$ of $a$ so that $\acl(a \cup A) = \acl(a' \cup A)$ (in an algebraically closed field, this is just the transcendence degree of $a$ over the field generated by $A$). Finally, given a finite tuple $a$ and sets $C,B \subseteq M$, we write $a \ind_C B$ to denote that $\dim \left(a/BC \right) = \dim \left(a / C \right)$.

\begin{rem}
If $T$ is geometric, then it is easy to check that $\ind$ is an \emph{independence relation}, i.e.~ it satisfies the following properties for all tuples $a,a', b,b',d$ and $C,D \subseteq M$:
\begin{itemize}
\item $a \ind_C b \iff \acl(a,C) \ind_C \acl(b,C)$;
\item (extension) if $a \ind_C b$ and $d$ is arbitrary, then there exists some $a'$ so that $a' \ind_C bd$ and $a' \equiv_{Cb} a$ (which means that $a'$ belongs to exactly the same $Cb$-definable subsets of $M^{|a|}$ as $a$).
\item (monotonicity) $a a' \ind_C b b' \implies a \ind_C b$;
\item (symmetry) $a \ind_C b \implies b \ind_C a$;
\item (transitivity) $a \ind_D bb' \iff a \ind_{Db} b' $ and $a \ind_{D} b$;
	\item (non-degeneracy) if $a \ind_C b$ and $d \in \acl(a,C) \cap \acl(b,C)$, then $d \in \acl(C)$.
\end{itemize}
\end{rem}

The following property expresses that the matroid defined by the algebraic closure is \emph{linear}, in the sense that the closure operator behaves more like span in vector spaces, as opposed to algebraic closure in fields.

\begin{defn}\cite[Definition 2.1]{berenstein2012weakly}\label{def: weak loc mod}
A geometric theory $T$ is \emph{weakly locally modular} if 	for any saturated $\mathcal{M} \models T$ and $A,B$ small subsets of $\mathcal{M}$ there exists some small set $C \ind_{\emptyset} AB$ such that $A \ind_{\acl(AC) \cap \acl(BC)} B$.
\end{defn}

Recall that a linearly ordered structure $\mathcal{M}=(M,<, \ldots)$ is \emph{$o$-minimal} if every definable subset of $M$ is a finite union of intervals (see e.g.~ \cite{van1998tame}).
\begin{sample}\label{ex: lin o-min} \cite[Section 3.2]{berenstein2012weakly}
		An $o$-minimal structure is linear (i.e.~any
normal interpretable family of plane curves in $T$ has dimension $\leq 1$)  if and only if it is weakly locally modular.

	In particular, every theory of an ordered vector space over an ordered division ring is weakly locally modular (so Theorem \ref{thm: non-cart lin} applies to semi-linear relations).
\end{sample}

The following is a key model-theoretic lemma.

\begin{lem}\label{lem: loc mod}
Assume that $T$ is geometric and weakly locally modular, and $\mathcal{M} = (M, \ldots) \models T $ is $\aleph_1$-saturated. Assume that $E \subseteq M^{d_1} \times \ldots \times M^{d_r}$ is an $r$-ary relation defined by a formula with parameters in a finite tuple $b$, and $E$ contains no $r$-grid $A=\prod_{i \in [r]}A_i$ with each $A_i \subseteq M^{d_i}$ infinite. Then for any $(a_1, \ldots, a_r) \in E$ there exists some $i \in [r]$ so that $a_i \in \acl \left( \left\{a_j : j \in [r] \setminus \{i\} \right\}, b\right)$.
\end{lem}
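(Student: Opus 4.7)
Absorbing $b$ into the base language, I assume $b = \emptyset$, so $\acl$ means algebraic closure over $b$. The plan is to prove the contrapositive: if $(a_1,\ldots,a_r) \in E$ and $a_i \notin \acl(a_{[r]\setminus\{i\}})$ for every $i \in [r]$, then $E$ contains an infinite $r$-grid, contradicting the hypothesis. By $\aleph_1$-saturation and compactness applied to the natural partial type asserting the existence of such a grid (with pairwise distinct entries per coordinate), it is enough to produce, for each $N$, a finite grid $\prod_{i \in [r]} A_i \subseteq E$ with $|A_i| \geq N$.

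Starting from the singleton grid $\prod_i \{a_i\}$, I build such finite grids by iteratively adjoining one element to one coordinate at a time. Given the current grid $\prod_i A_i \subseteq E$, extending $A_j$ by a new element amounts to realizing the partial type
\[ \pi_j(x) \;:=\; \{\,E(a'_1,\ldots,a'_{j-1},x,a'_{j+1},\ldots,a'_r) : a'_i \in A_i,\ i \neq j\,\} \cup \{\,x \neq a : a \in A_j\,\}. \]
By elimination of $\exists^\infty$, $\pi_j$ is consistent (hence realized by $\aleph_1$-saturation) provided
\[ (\ast)_j \qquad a_j \notin \acl\bigl(\bigcup_{i\neq j} A_i\bigr), \]
which initially holds for every $j$ by hypothesis. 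The task thus reduces to maintaining $(\ast)_j$ for every $j$ throughout the iterative construction.

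The difficulty is that extending coordinate $j'$ enlarges $\bigcup_{i\neq j} A_i$ for every $j \neq j'$, and in a theory that is not weakly locally modular this enlargement can destroy $(\ast)_j$ --- for instance, in the point-line incidence configuration in $\RR^2$ any two generic points on a common line algebraically determine that line, so after a single extension of the point-coordinate the condition $(\ast)$ for the line-coordinate collapses, consistent with $K_{2,2}$-freeness of that relation. Weak local modularity is precisely the hypothesis that rules this out. Applying it to $A = a_j$ and $B = a_{[r]\setminus\{j\}}$ yields a small set $C_j \ind_\emptyset a_1\ldots a_r$ for which the canonical base $D_j := \acl(C_j a_j)\cap \acl(C_j a_{[r]\setminus\{j\}})$ witnesses $a_j \ind_{D_j} a_{[r]\setminus\{j\}}$. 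A dimension count using exchange together with the non-algebraicity of every $a_i$ shows $D_j \subseteq \acl(C_j)$, so in fact $a_j \ind_{C_j} a_{[r]\setminus\{j\}}$. Working relative to a joint base $C$ absorbing all $C_j$ (chosen so that $C$ remains independent from $a_1\ldots a_r$), each newly added grid element is taken to realize the appropriate type over $C$ together with the existing grid elements while being $\ind_C$-independent from $a_j$; non-degeneracy of $\ind$ then forces $(\ast)_j$ to persist at each stage.

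The main obstacle is the collapse $D_j \subseteq \acl(C_j)$: this is the model-theoretic heart of the argument, converting the pointwise non-algebraicity of each $a_i$ into a uniform independence statement robust under parallel copies, and is precisely what distinguishes weakly locally modular theories from general geometric theories. Once it is in place, the iteration produces arbitrarily large finite grids and compactness delivers the desired infinite grid contradicting the hypothesis.
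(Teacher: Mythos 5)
Your overall architecture is the same as the paper's (contrapositive; use weak local modularity to find good bases; realize copies of the $a_i$'s to build an infinite grid by compactness), but the step you yourself identify as the heart of the argument --- ``a dimension count using exchange together with the non-algebraicity of every $a_i$ shows $D_j \subseteq \acl(C_j)$, so in fact $a_j \ind_{C_j} a_{[r]\setminus\{j\}}$'' --- is false, and the proof does not survive without it. Since $C_j \ind_{\emptyset} a_1\ldots a_r$, your conclusion $a_j \ind_{C_j} a_{[r]\setminus\{j\}}$ would give $a_j \ind_{\emptyset} a_{[r]\setminus\{j\}}$ by transitivity; i.e.\ you would be proving that in a weakly locally modular geometric theory, pairwise non-algebraicity of the coordinates forces full independence of the tuple. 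A counterexample already lives in a $\mathbb{Q}$-vector space (modular, hence weakly locally modular): take $u,v,w$ independent, $a_1=(u,v)$, $a_2=(u+v,w)$. Neither $a_i$ is algebraic over the other ($\dim(a_i/a_{3-i})=1$), yet $\dim(a_1a_2)=3<4$, so $a_1 \nind_{\emptyset} a_2$, and for every admissible $C$ the canonical base $\acl(Ca_1)\cap\acl(Ca_2)$ contains $u+v \notin \acl(C)$. The exchange argument you gesture at only works when $a_j$ is a singleton (or has dimension $1$ over $C_j$): an element $d$ of the canonical base outside $\acl(C_j)$ lowers $\dim(a_j/C_jd)$ by at least one but need not make $a_j$ algebraic, because the hypothesis only gives $\dim(a_j/a_{[r]\setminus\{j\}},b)\geq 1$, not full dimension $d_j$.

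The paper's proof avoids this by \emph{not} collapsing the canonical bases. After arranging $C=\bigcup_i C_i$ with $C\ind_{\emptyset} a_1\ldots a_r b$, it sets $D:=\bigcap_{i\in[r]}\acl(a_{\neq i},b,C)$ where $a_{\neq i}$ denotes $\{a_j: j\neq i\}$, and proves two claims: (A) each canonical base $\acl(a_i,C_i)\cap\acl(a_{\neq i},b,C_i)$ is contained in $D$ (the key observation being $\acl(a_i,C_i)\subseteq\acl(a_{\neq j},C)$ for every $j\neq i$, which uses $r\geq 2$), whence $a_i\ind_D a_{\neq i}$; and (B) $a_i\notin\acl(D)$, because $D\subseteq\acl(a_{\neq i},b,C)$ and $C\ind_{a_{\neq i}b}a_i$. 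The infinite grid is then built from realizations of $\tp(a_i/D\,\bar\alpha_{<i}\,a_{>i})$, working over $D$ rather than over $C$; in the vector-space example above this is exactly right, since all copies of $a_1$ must preserve the value $u+v\in D$ and the resulting grid does lie in the relation. So the fix is not to show the canonical base is trivial over $C$ (it isn't), but to show the $a_i$ remain non-algebraic over the common enlarged base $D$ and run your iteration there.
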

\begin{proof}[Proof of Lemma \ref{lem: loc mod}]

Assume not, then there exist some $(a_1, \ldots, a_r)$ in $\mathcal{M}$ such that $(a_1,\ldots, a_r) \in E$, but $a_i \notin \acl \left( a_{\neq i}, b \right)$ for every $i \in [r]$, where $a_{\neq i} := \left\{a_j : j \in [r] \setminus \{i\} \right\}$.

By weak local modularity, for each $i \in [r]$ there exists some small set $C_i \subseteq \mathcal{M}$ so that  
$$C_i \ind_{\emptyset} \left\{a_1, \ldots, a_r \right\} \cup \{b\} \textrm{ and } a_i \ind_{\acl(a_i, C_i) \cap \acl(a_{\neq i}, b, C_i)} a_{\neq i} b.$$

By extension of $\ind$, we may assume that $C_i \ind_{\emptyset} a_1, \ldots, a_r, b, C_{<i}$ for all $i \in [r]$. Hence by transitivity $C \ind_{\emptyset} a_1, \ldots, a_r, b$, where $C := \bigcup_{i \in [r]} C_i$.

Let $D := \bigcap_{i \in [r]} \acl \left( a_{\neq i}, b,  C\right)$.

\begin{claim*}[A]
For every $i \in [r]$, $a_i \ind_D a_{\neq i}$.	
\end{claim*}
\begin{proof}
Fix $i \in [r]$. As $C \ind_{\emptyset} a_1, \ldots, a_r, b$ and $a_i \ind_{\acl(a_i, C_i) \cap \acl(a_{\neq i}, b, C_i)} a_{\neq i} b$, by symmetry and transitivity we have 
$$a_i \ind_{\acl(a_i, C_i) \cap \acl(a_{\neq i}, b, C_i)} a_{\neq i} b C.$$

Note that $\acl(a_i,C_i) \subseteq \acl(a_{\neq j},C)$ for every $i \neq j \in [r]$, hence $\acl(a_i,C_i) \cap \acl(a_{\neq i},b,C_i) \subseteq D$, and clearly $D \subseteq \acl(a_{\neq i},b,C)$. Hence 
$a_i \ind_{D} a_{\neq i} b C$, and in particular $a_i \ind_{D} a_{\neq i}$.
\end{proof}

\begin{claim*}[B]
For every $i \in [r]$, $a_i \notin \acl(D)$.	
\end{claim*}
\begin{proof}
Fix $i \in [r]$. Then $\acl(D) \subseteq \acl(a_{\neq i}, b, C)$ by definition.  But as $C \ind_{a_{\neq i} b} a_i$ by transitivity,  if $a_i \in \acl(a_{\neq i}, b, C)$ then we would get $a_i \in \acl(a_{\neq i},b)$, contradicting the assumption.
\end{proof}

By induction we will choose sequences of tuples $\bar{\alpha}_i = (a_i^{t})_{t \in \mathbb{N}}, i \in [r]$ in $\mathcal{M}$ such that for every $i \in [r]$ we have:
\begin{enumerate}
	\item $a^t_i \equiv_{D \bar{\alpha}_{<i} a_{>i}} a_i$ for all $t \in \mathbb{N}$;
	\item $a^t_i \neq  a_i^{s}$ (as tuples) for all $s \neq t \in \mathbb{N}$;
	\item $\bar{\alpha}_i \ind_D \bar{\alpha}_{<i} a_{>i}$.
\end{enumerate}

Fix $i \in [r]$, and assume that we already chose some sequences $\bar{a}_j$ for $1 \leq j < i$ satisfying (1)--(3). 

\begin{claim*}[C]
	We have $a_i \ind_D \bar{\alpha}_{<i} a_{>i}$.
\end{claim*}
\begin{proof}
If $i=1$, this claim becomes $a_i \ind_D a_{\neq i}$, hence holds by Claim (A). So assume $i \geq  2$.  We will show by induction that for each $l = 1, \ldots, i-1$ we have
$$\bar{\alpha}_{i-1} \ldots \bar{\alpha}_{i-l} \ind_D \bar{\alpha}_{< i-l} a_{>i-1}.$$
For $l =1$ this is equivalent to $\bar{\alpha}_{i-1} \ind_D \bar{\alpha}_{<i-1} a_{> i-1}$, which holds by (3) for $i-1$. So we assume this holds for $l < i-1$, that is we have
$\bar{\alpha}_{i-1} \ldots \bar{\alpha}_{i-l} \ind_D \bar{\alpha}_{< i - l} a_{> i - 1}$,
 and show it for $l+1$. By assumption and transitivity we have 
 $$\bar{\alpha}_{i-1} \ldots \bar{\alpha}_{i-l} \ind_{D \bar{\alpha}_{i-(l+1)}} \bar{\alpha}_{< i - (l+1)}a_{>i-1}.$$
 Also $\bar{\alpha}_{i-(l+1)} \ind_{D} \bar{\alpha}_{< i - (l+1)}a_{>i-1}$ by (3) for $i-(l+1) < i$.  Then by transitivity again $\bar{\alpha}_{i-1} \ldots \bar{\alpha}_{i-l}  \bar{\alpha}_{i-(l+1)}\ind_{D} \bar{\alpha}_{< i - (l+1)}a_{>i-1}$, which concludes the inductive step.
 
 In particular, for $l = i-1$ we get $\bar{\alpha}_{<i} \ind_{D} a_{>i-1}$, that is $\bar{\alpha}_{<i} \ind_{D} a_i a_{>i}$. By transitivity and Claim (A) this implies $\bar{\alpha}_{<i} a_{>i} \ind_{D} a_i$, and we conclude by symmetry.
\end{proof}

Using Claim (C) and extension of $\ind$, we can choose a sequence 
$\bar{\alpha}_i = (a^t_i)_{t \in \mathbb{N}}$ so that $a^t_i \equiv_{D \bar{\alpha}_{<i} a_{>i}} a_i$ and $a^t_i \ind_{D} \bar{\alpha}_{<i} a_{>i} a_i^{<t}$ for every $t \in \mathbb{N}$. By Claim (B) we have $a_i \notin \acl(D)$, hence $a_i^t \notin \acl(D)$, hence $a^t_i \notin \acl\left(\bar{\alpha}_{<i}, a_{>i}, a_i^{<t} \right)$, so in particular all the tuples $(a^t_i)_{t \in \mathbb{N}}$ are pairwise-distinct and $\bar{\alpha}_i$ satisfies (1) and (2). By symmetry and transitivity of $\ind$ we get $\bar{\alpha}_i \ind_{D} \bar{\alpha}_{<i} a_{>i}$. This concludes the inductive step in the construction of the sequences.

Finally, as (1) holds for all $i \in [r]$ and $b$ is contained in $D$, it follows that $(a^{t_1}_{1}, \ldots, a^{t_r}_r) \equiv_{b} (a_1, \ldots, a_r)$, and hence $(a^{t_1}_{1}, \ldots, a^{t_r}_r) \in E$ for every $(t_1, \ldots, t_r) \in \mathbb{N}^r$. By (1) each of the sets $\left\{a^{t}_i : t \in \mathbb{N} \right\}, i \in [r]$ is infinite --- contradicting the assumption on $E$. This concludes the proof of the lemma.
\end{proof}

\begin{thm} \label{thm: non-cart lin}
Assume that $T$ is a geometric, weakly locally modular theory, and $\mathcal{M} \models T$. Assume that $r \in \mathbb{N}_{\geq 2}$ and $\varphi(\bar{x}_1, \ldots, \bar{x}_r,\bar{y})$ is an $\mathcal{L}$-formula without parameters, with $|\bar{x}_i| = d_i, |\bar{y}| = e$. Then there exists some $\alpha = \alpha(\varphi) \in \mathbb{R}_{>0}$ satisfying the following.

Given $b \in M^{e}$, consider the $r$-ary relation 
$$E_b := \left\{(a_1, \ldots, a_r) \in M^{d_1} \times \ldots \times M^{d_r} : \mathcal{M} \models \varphi(a_1, \ldots, a_r, b) \right\}.$$
Then for every $b \in M^e$, exactly one of the following two cases must occur: 
\begin{enumerate}
	\item $E_b$ is not  $K_{k, \ldots, k}$-free for any $k \in \mathbb{N}$;
	\item for any finite $r$-grid $B \subseteq \prod_{i \in [r]} M^{d_i}$ we have
\[ |E_b \cap B| \leq \alpha \delta^r_{r-1}(B). \] 
\end{enumerate}
\end{thm}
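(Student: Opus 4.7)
The plan is to deduce the theorem from Lemma \ref{lem: loc mod} combined with uniform finiteness coming from elimination of $\exists^\infty$, via a straightforward partition-and-count argument on grids.

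First I would reduce to the $\aleph_1$-saturated case by passing to an elementary extension $\mathcal{N} \succeq \mathcal{M}$. The property ``$E_b$ contains $K_{k,\ldots,k}$'' is existential in $b$, hence absolute between $\mathcal{M}$ and $\mathcal{N}$; similarly, $\acl$-membership for tuples and parameter sets drawn from $M$ is preserved in elementary extensions. So it suffices to prove the bound inside $\mathcal{N}$ with a constant depending only on $\varphi$. Now fix $b$ and assume case (1) fails, so that $E_b$ is $K_{k,\ldots,k}$-free for some $k$. A fortiori $E_b$ contains no infinite $r$-grid, so Lemma \ref{lem: loc mod} produces, for every $(a_1, \ldots, a_r) \in E_b$, some index $i(a) \in [r]$ with $a_{i(a)} \in \acl(a_{\neq i(a)}, b)$.

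Next, by elimination of $\exists^\infty$ applied to $\varphi$ (iterated across the coordinates of the tuple $\bar{x}_i$), for each $i \in [r]$ there is $N_i = N_i(\varphi) \in \mathbb{N}$ such that for any parameters $c_j \in M^{d_j}$ ($j \neq i$) and $b \in M^e$, the fiber $\{a_i \in M^{d_i} : \varphi(c_1,\ldots,c_{i-1},a_i,c_{i+1},\ldots,c_r,b)\}$ is either infinite or of size at most $N_i$. I set $\alpha(\varphi) := \max_{i \in [r]} N_i$. Given a finite $r$-grid $B = B_1 \times \cdots \times B_r$ with $|B_i| = n_i$, I partition $E_b \cap B$ according to $i(a)$. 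For each fixed $i$ and $(a_j)_{j \neq i} \in \prod_{j \neq i} B_j$, the condition $a_i \in \acl(a_{\neq i}, b)$ forces the corresponding fiber to be finite, and hence of size at most $N_i$ by the choice of $N_i$. Summing yields
\[ |E_b \cap B| \;\leq\; \sum_{i=1}^r N_i \prod_{j \neq i} n_j \;\leq\; \alpha \cdot \delta^r_{r-1}(B), \]
as required.

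The substantive content is entirely in Lemma \ref{lem: loc mod}, which is already established. The only delicate point is ensuring that $\alpha$ depends only on the formula $\varphi$ and not on $b$ or on the particular $k$ witnessing $K_{k,\ldots,k}$-freeness; this is exactly what the uniform elimination of $\exists^\infty$ furnishes. The saturated-to-unsaturated transfer uses only elementary absoluteness of existential sentences and of algebraicity over parameters from $M$, so it presents no obstacle.
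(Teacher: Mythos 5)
Your overall strategy --- pass to a saturated elementary extension, use Lemma \ref{lem: loc mod} to assign to each point of $E_b$ a coordinate that is algebraic over the remaining ones, and then count fibers using the uniform bound supplied by elimination of $\exists^\infty$ --- is the same as the paper's. However, there is a genuine gap at the key counting step: the assertion that ``the condition $a_i \in \acl(a_{\neq i}, b)$ forces the corresponding fiber to be finite'' is false. Algebraicity of $a_i$ over $a_{\neq i}b$ only says that $a_i$ lies in \emph{some} finite $(a_{\neq i},b)$-definable set; it does not make the specific $\varphi$-fiber $E^i_{\bar a;b}=\{a_i^* : \varphi(a_1,\ldots,a_i^*,\ldots,a_r,b)\}$ finite. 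Concretely, take $r=2$ and $\varphi(x,y) := (x=y) \vee (x=0) \vee (y=0)$ in $(\mathbb{R},<,+)$ (a weakly locally modular geometric, indeed o-minimal, structure, and $x=0$ is definable by $x+x=x$). This relation is $K_{3,3}$-free, the point $(0,0)$ satisfies $0 \in \acl(\emptyset)$ in each coordinate, and yet both of its $\varphi$-fibers equal all of $M$. So the contribution of such points is not controlled by $N_i$, and the fallback bound ``number of $a_i\in B_i$ with $a_i\in\acl(a_{\neq i},b)$'' is not bounded either (e.g.\ $\acl(c)$ contains the whole $\mathbb{Q}$-span of $c$ and can meet $B_i$ in arbitrarily many points).

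The missing ingredient is a compactness argument, which is exactly the word the paper's proof inserts at this point. From Lemma \ref{lem: loc mod} one extracts, for each complete type $p$ over $b$ containing $\varphi(\bar x,b)$, an index $i_p$, a formula $\theta_p(\bar x,b)\in p$ implying $\varphi(\bar x,b)$, and a bound $N_p$ such that \emph{every} $\theta_p$-fiber in the $i_p$-th coordinate has size at most $N_p$ (one conjoins to the algebraicity witness the definable condition that its fiber has at most $N_p$ elements). By compactness of the space of types containing $\varphi(\bar x,b)$, finitely many $\theta_{p_1},\dots,\theta_{p_J}$ cover $E_b$, and your partition-and-count then applies to each piece separately, yielding $|E_b\cap B|\le \bigl(\sum_{j} N_{p_j}\bigr)\delta^r_{r-1}(B)$. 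Note that after this repair the constant a priori depends on $b$ (the covering formulas do), so an additional uniformity argument is needed to obtain $\alpha=\alpha(\varphi)$; your single constant $N_i(\varphi)$ for the fibers of $\varphi$ itself no longer suffices once the covering formulas enter. As written, the proposal skips both of these steps.
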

\begin{proof}

Assume that $\mathcal{N} = (N, \ldots )$ is an elementary extension of $\mathcal{M}$ and $b \in M^{e}$. Then, for a fixed $k \in \mathbb{N}$,
$$E_b = \{(a_1, \ldots, a_r) \in M^{d_1} \times \ldots \times M^{d_r} : \mathcal{M} \models \varphi(a_1, \ldots, a_r, b) \}$$
is $K_{k, \ldots, k}$-free if and only if 
$$E'_b = \{(a_1, \ldots, a_r) \in N^{d_1} \times \ldots \times N^{d_r} : \mathcal{N} \models \varphi(a_1, \ldots, a_r, b) \}$$
is $K_{k, \ldots, k}$-free, as this can be expressed by a first-order formula $\psi(y)$ and $\mathcal{M} \models \psi(b) \iff \mathcal{N} \models \psi(b)$. Similarly, for a fixed $\alpha \in \mathbb{R}$, $|E_b \cap B| \leq \alpha \delta^r_{r-1}(B)$ for every finite $r$-grid $B \subseteq \prod_{i \in [r]} M^{d_i}$ if and only if $|E'_b \cap B| \leq \alpha \delta^r_{r-1}(B)$ for every finite $r$-grid $B \subseteq \prod_{i \in [r]} N^{d_i}$ (as for every fixed sizes of $B_1, \ldots, B_r$ this condition can be expressed by a first-order formula). Hence, passing to an elementary extension, we may assume that $\mathcal{M}$ is $\aleph_1$-saturated.

As $T$ eliminates $\exists^{\infty}$, there exists some $m = m(\varphi)  \in \mathbb{N}$ such that for any $i \in [r]$, $b \in M^e$ and tuple $\bar{a} := \left( a_j \in M^{d_j} : j \in [r] \setminus \{i\} \right)$, the fiber 
$$E^i_{\bar{a}; b} := \left\{  a^* \in M^{d_i} : \mathcal{M} \models \varphi(a_1, \ldots, a_{i-1}, a^*, a_{i+1}, \ldots, a_{r}; b)\right\}$$
is finite if and only if it has size $\leq m$.

Given an arbitrary $b \in M^{e}$ such that $E_b$ is $K_{k, \ldots, k}$-free, Lemma \ref{lem: loc mod} and compactness imply that for every tuple $(a_1, \ldots, a_r) \in E_b$, there exists some $i \in [r]$ such that the fiber $E^i_{\bar{a}; b}$ is finite, hence $|E^i_{\bar{a}; b}| \leq m$.
This easily implies that for any finite $r$-grid $B \subseteq \prod_{i \in [r]} M^{d_i}$
we have $|E_b\cap B| \leq m \delta^r_{r-1}(B)$.
\end{proof}

\begin{rem}
In the binary case, a similar observation was made by Evans in the context of certain \emph{stable} theories \cite[Proposition 3.1]{evans2005trivial}.	
\end{rem}

Restricting to \emph{distal} structures, we can relax the assumption ``$E_b$ is $K_{k, \ldots, k}$-free for some $k$'' to ``$E_b$ does not contain a direct product of infinite sets'' in Theorem \ref{thm: non-cart lin}  (we refer to e.g.~the introduction in \cite{chernikov2018regularity} or \cite{chernikov2020cutting} for a general discussion of model-theoretic distality and its connections to combinatorics).
	
\begin{cor}\label{cor: distal wo log}

Assume that $T$ is a distal, geometric, weakly locally modular theory, $\mathcal{M} \models T$, $r \in \mathbb{N}_{\geq 2}$ and $\varphi(\bar{x}_1, \ldots, \bar{x}_r,\bar{y})$ is an $\mathcal{L}$-formula without parameters, with $|\bar{x}_i| = d_i, |\bar{y}| = e$. Then there exists some $\alpha = \alpha(\varphi) \in \mathbb{R}_{>0}$ satisfying the following.

Assume that $b \in M^{e}$ and the $r$-ary relation $E_b$ does not contain an $r$-grid $A = \prod_{i \in [r]}A_i$ with each $A_i \subseteq M^{d_i}$ infinite. Then $|E_b \cap B| \leq \alpha \delta^r_{r-1}(B)$ for any finite $r$-grid $B$.
\end{cor}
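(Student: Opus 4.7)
The proof follows the strategy of Theorem~\ref{thm: non-cart lin}, with distality supplying the missing ingredient. The bound ``$|E_b\cap B|\le\alpha\,\delta^r_{r-1}(B)$ for every $r$-grid $B$ of a fixed shape $(n_1,\ldots,n_r)$'' is a first-order condition on $b$, so it suffices to prove it after passing to an $|M|^+$-saturated elementary extension $\mathcal N\succeq\mathcal M$ with $b\in M\subseteq N$. Provided we know that $E_b(\mathcal N)$ still contains no infinite $r$-grid, Lemma~\ref{lem: loc mod} applies in $\mathcal N$ and yields that every $(a_1,\ldots,a_r)\in E_b(\mathcal N)$ has some coordinate $a_i\in\acl(a_{\ne i},b)$. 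Elimination of $\exists^\infty$ (part of geometricity) then bounds each resulting finite fiber uniformly by some $m=m(\varphi)$, and double-counting gives $|E_b\cap B|\le m\,\delta^r_{r-1}(B)$ with $\alpha:=m$.

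The crux is therefore the transfer ``$E_b(\mathcal M)$ contains no infinite $r$-grid $\Rightarrow E_b(\mathcal N)$ contains no infinite $r$-grid''. In a general geometric theory this transfer fails: the half-graph is $K_{k,k}$-free for no $k$ yet has no infinite grid in a non-saturated base model, while it acquires one in any sufficiently saturated extension. Under distality, however, the ``infinite grid'' property should itself be first-order definable in $b$. Concretely, I plan to establish the dichotomy: there exists $k=k(\varphi)$ such that, in any model and for any $b$, the mere existence of a $K_{k,\ldots,k}$-subgrid in $E_b$ already forces $E_b$ to contain an infinite $r$-grid in that same model. Since containing $K_{k,\ldots,k}$ is visibly first-order in $b$, this makes the presence of an infinite grid first-order definable, and hence preserved between $\mathcal M$ and $\mathcal N$ by elementary equivalence.

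To establish this distal dichotomy — the main obstacle — my plan is a Ramsey extraction upgraded by distality. Starting from $E_b$ containing $K_{k,\ldots,k}$ for $k$ sufficiently large (passing through a saturated extension and compactness), one produces mutually indiscernible sequences $(a_i^t)_{t\in\NN}$, $i\in[r]$, with every product in $E_b$. Distality of $T$ then provides strong uniform structural properties for how $\varphi$ interacts with indiscernibles (for instance via distal cell decompositions or strong honest definitions in the sense of Chernikov--Simon), which should let one bound $k$ in terms of $\varphi$ alone and witness the infinite grid by a first-order formula in $b$. Once this distality-driven upgrade of Ramsey extraction is in place, the rest of the argument is a direct adaptation of the proof of Theorem~\ref{thm: non-cart lin}.
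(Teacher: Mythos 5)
Your overall skeleton is the same as the paper's: reduce the ``no infinite $r$-grid'' hypothesis to $K_{k,\ldots,k}$-freeness for some $k=k(\varphi)$ via a distality dichotomy, and then run the weakly-locally-modular machinery (Lemma \ref{lem: loc mod}, elimination of $\exists^\infty$, counting fibers) to get $\alpha=m(\varphi)$. The problem is that the dichotomy itself --- the existence of a uniform $k=k(\varphi)$ such that, in any model and for any $b$, $E_b$ contains $K_{k,\ldots,k}$ if and only if it contains a product of $r$ infinite sets --- is precisely the crux, and you do not prove it: you only announce a plan (``Ramsey extraction upgraded by distality\ldots should let one bound $k$ in terms of $\varphi$ alone''). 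This statement is exactly \cite[Theorem 5.12]{chernikov2020cutting}, which is what the paper invokes at this point; it is a genuine theorem whose proof uses the distal machinery (strong honest definitions / distal cell decompositions together with elimination of $\exists^\infty$), and extracting mutually indiscernible sequences from a large finite grid and then ``bounding $k$'' is not a routine Ramsey argument --- the uniformity in $b$ and in $\varphi$ is the whole content. So, as written, the central step of your proof is missing; it can be closed immediately by citing that theorem, but not by the sketch you give.

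Two smaller points. First, once the dichotomy is available, your detour through transferring ``no infinite grid'' to an $|M|^+$-saturated extension and re-running Lemma \ref{lem: loc mod} is unnecessary: from the dichotomy you get that $E_b$ is $K_{k,\ldots,k}$-free, a condition that is first-order in $b$ and hence holds in every elementary extension, so Theorem \ref{thm: non-cart lin} applies verbatim and finishes the proof --- this is how the paper concludes. Second, your motivating half-graph example is shaky: in the natural discrete-order setting $(\mathbb{N},<)$ the theory fails elimination of $\exists^\infty$ (the number of points strictly between two parameters can be any finite size), so it is not geometric, while in a dense ($o$-minimal) setting the half-graph already contains an infinite grid in every model; this does not affect your argument, since the example is only motivational, but it does not actually witness failure of the transfer within a geometric theory.
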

\begin{proof}
	By \cite[Theorem 5.12]{chernikov2020cutting}, if $\mathcal{M}$ is a distal structure with elimination of $\exists^{\infty}$, then there exists some $k = k(\varphi) \in \mathbb{N}$ such that for every $b \in M^{e}$, $E_b$ is not $K_{k, \ldots, k}$-free if and only if $\prod_{i \in [r]} A_i \subseteq E_b$ for some infinite $A_i \subseteq M^{d_i}$. The conclusion now follows by Theorem \ref{thm: non-cart lin}.
\end{proof}

\begin{rem}
	Weaker bounds for non-cartesian relations definable in arbitrary distal theories are established in \cite{chernikov2018model, chernikov2021model}.\end{rem}

Now we show that in the $o$-minimal case, this result actually characterizes weak local modularity. By the trichotomy theorem in $o$-minimal structures \cite{peterzil1998trichotomy} we have the following equivalence.

\begin{fact}\label{fac: o-min trich}
Let $\mathcal{M}$ be an $o$-minimal ($\aleph_1$-)saturated structure.	The following are equivalent:
\begin{itemize}
	\item $\mathcal{M}$ is not linear (see Example \ref{ex: lin o-min});
	\item $\mathcal{M}$ is not weakly locally modular;
	\item there exists a real closed field definable in $\mathcal{M}$.
\end{itemize}
\end{fact}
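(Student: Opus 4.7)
This fact packages together two known ingredients: the trichotomy theorem of Peterzil--Starchenko \cite{peterzil1998trichotomy} linking non-linearity to definability of a real closed field, and the characterization of linearity by weak local modularity in the $o$-minimal setting already recorded in Example~\ref{ex: lin o-min} (proved in \cite{berenstein2012weakly}). My plan is to prove the three equivalences in a cycle, pushing the main burden onto the Peterzil--Starchenko step.

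The implication from the third item to the first is the easy direction. If $R$ is a real closed field definable in $\mathcal{M}$, then the two-parameter family of affine lines $\{y = ax+b : (a,b) \in R^2\}$ is a definable normal family of plane curves of dimension $2$, directly contradicting linearity (where every normal interpretable family has dimension at most $1$). The equivalence between the first and second items is Example~\ref{ex: lin o-min}, so I would simply cite Berenstein--Vassiliev for that step, noting that the non-trivial content there is to translate the geometric notion of ``normal family of dimension $\ge 2$'' into a configuration witnessing the failure of Definition~\ref{def: weak loc mod}.

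The substantive direction is from the first item to the third, i.e.~recovering a real closed field from non-linearity; this is the hard half of the Peterzil--Starchenko trichotomy. Starting from a definable normal family $\{C_t\}_{t \in T}$ of plane curves of dimension $\geq 2$, the program is: first, localize at a generic point $p$ and work with germs of the $C_t$ through $p$; second, exploit $o$-minimal differential calculus (cell decomposition and $C^k$-stratification) to extract a local group structure from composition of germs; third, use the $2$-dimensionality of the family to produce a second, independent ``scaling'' operation and verify its distributivity over the first on a definable neighborhood, yielding a local real closed field on an interval. The $\aleph_1$-saturation of $\mathcal{M}$ is then used to upgrade this local structure to a genuine globally definable real closed field. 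The crux, and the main technical obstacle, lies in the third step: producing a multiplication compatible with the addition requires delicate tangent-space and dimension-counting arguments of \cite{peterzil1998trichotomy} showing that any definable group arising from a non-linear normal family in an $o$-minimal structure must in fact carry a compatible ring structure.
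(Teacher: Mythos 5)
Your proposal is correct and matches the paper's treatment: the paper states this as a Fact with no proof, attributing the equivalence of non-linearity with definability of a real closed field to the Peterzil--Starchenko trichotomy theorem \cite{peterzil1998trichotomy} and the equivalence of linearity with weak local modularity to \cite{berenstein2012weakly} (as recorded in Example \ref{ex: lin o-min}), exactly the two ingredients you cite. Your easy direction (a definable real closed field yields a two-dimensional normal family of lines) and your sketch of the hard direction are consistent with the cited sources, so nothing further is needed.
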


\begin{cor}\label{cor: lin iff loc mod} Let $\mathcal{M}$ be an $o$-minimal structure. The following are equivalent:
\begin{enumerate}
\item $\mathcal{M}$ is weakly locally modular;
	\item Corollary \ref{cor: distal wo log} holds in $\mathcal{M}$;
	\item for every $d_1,d_2 \in \mathbb{N}$ and every definable (with parameters) $X \subseteq M^{d_1} \times M^{d_2}$, if $X$ is $K_{k,k}$-free for some $k \in \mathbb{N}$,
then there exist some $\beta < \frac{4}{3}$ and $\alpha$ such
that: for any $n$ and $B_i \subseteq M^{d_i}$ with $|B_i| = n$ we have
\[ |X \cap B_1 \times B_2| \leq \alpha n^{\beta}; \] 
	\item there is no infinite field definable in $\mathcal{M}$.

\end{enumerate}
	
\end{cor}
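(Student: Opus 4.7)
I will establish the cyclic chain $(1) \Rightarrow (2) \Rightarrow (3) \Rightarrow (4) \Rightarrow (1)$. The implication $(4) \Rightarrow (1)$ is essentially Fact~\ref{fac: o-min trich}: passing to an $\aleph_1$-saturated elementary extension preserves both weak local modularity and the (non)existence of infinite definable fields, since both are properties of the complete theory, and a definable real closed field is in particular an infinite definable field. The implication $(1) \Rightarrow (2)$ is also essentially immediate: every $o$-minimal theory is geometric (Exchange is standard, and elimination of $\exists^\infty$ follows from cell decomposition) and distal, so Corollary~\ref{cor: distal wo log} applies verbatim under assumption $(1)$.

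For $(2) \Rightarrow (3)$, fix a definable $X \subseteq M^{d_1} \times M^{d_2}$ that is $K_{k,k}$-free for some $k$. Then $X$ cannot contain a product $A_1 \times A_2$ of two infinite sets, since such a product would contain $K_{k,k}$ for every $k$. Applying Corollary~\ref{cor: distal wo log} with $r=2$ gives $|X \cap (B_1 \times B_2)| \leq \alpha \, \delta^2_1(B) = \alpha(n_1+n_2) \leq 2\alpha n$ whenever $|B_1|=|B_2|=n$. This is a linear bound, hence certainly satisfies $(3)$ with $\beta = 1 < 4/3$.

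The main content is $(3) \Rightarrow (4)$, which I will prove contrapositively: assume some infinite field $F$ is definable (with parameters) in $\mathcal{M}$. By a theorem of Pillay, every infinite field definable in an $o$-minimal structure is definably isomorphic to a real closed field, so I may assume $F$ is real closed. Inside $F$ consider the semilinear point-line incidence relation $X = \{((x_1,x_2),(y_1,y_2)) \in F^2 \times F^2 : x_2 = y_1 x_1 + y_2\}$, which is $K_{2,2}$-free because two distinct lines meet in at most one point. Since $F$ and $X$ are definable in $\mathcal{M}$, $X$ is a definable relation in $\mathcal{M}$ (after identifying $F$ with an $\mathcal{M}$-definable set in some $M^d$). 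Elekes's classical construction, taking the point set $\{1,\ldots,N\} \times \{1,\ldots,N^2\}$ and the line set $\{y = ax+b : a \in \{1,\ldots,N/2\}, b \in \{1,\ldots,N^2/2\}\}$, involves only integer arithmetic, transfers to any field of characteristic $0$, and produces $n = \Theta(N^3)$ points, $n$ lines, and $\Omega(N^4) = \Omega(n^{4/3})$ incidences. This defeats $(3)$ for every $\beta < 4/3$.

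The main obstacle will be justifying the transfer of Elekes's construction to an arbitrary real closed field $F$ definable in $\mathcal{M}$, namely checking that the integer grid $\{1,\ldots,N\}$ can be taken inside $F$ (it can, because $F$ has characteristic $0$) and that the resulting incidence configuration is a finite restriction of an $\mathcal{M}$-definable relation, so that the hypothesis of $(3)$ genuinely applies to it. Everything else is a direct concatenation of the cited facts from the paper.
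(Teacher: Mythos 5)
Your proof is correct and follows essentially the same route as the paper: $(1)\Rightarrow(2)$ via Corollary~\ref{cor: distal wo log}, $(2)\Rightarrow(3)$ trivially, $(3)\Rightarrow(4)$ contrapositively by realizing the $K_{2,2}$-free point--line incidence relation over the definable field (which has characteristic $0$ by $o$-minimality, so the integer-grid Szemer\'edi--Trotter lower bound configuration transfers), and $(4)\Rightarrow(1)$ via Fact~\ref{fac: o-min trich}. Your appeal to Pillay's theorem that the field is real closed is unnecessary --- characteristic $0$ is all that is used, exactly as in the paper --- and your extra care about saturation and expressing ``defines an infinite field'' first-order (via elimination of $\exists^\infty$) is a harmless refinement of the paper's direct application of the trichotomy fact.
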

\begin{proof}
	(1) $\Rightarrow$ (2) by Corollary \ref{cor: distal wo log}, and (2) $\Rightarrow$ (3) is obvious. 
	
	(3) $\Rightarrow$ (4) Assume that $\mathcal{R}$ is an infinite field definable in $\mathcal{M}$, $\textrm{char}(\mathcal{R}) = 0$ by $o$-minimality.
	 Then the point-line incidence relation on $\mathcal{R}^2$ corresponds to a $K_{2,2}$-free definable relation $E \subseteq \mathcal{M}^{d} \times \mathcal{M}^d$ for some $d$. By the standard lower bound for Szemer\'edi-Trotter, the number of incidences satisfies $\Omega(n^{4/3})$, hence $E$ cannot satisfy (3).
	
	(4) $\Rightarrow$ (1) If $\mathcal{M}$ is not weakly locally modular, by Fact \ref{fac: o-min trich} a real closed field $\mathcal{R}$ is definable in $\mathcal{M}$.
\end{proof}

\subsection{Applications to semialgebraic relations}\label{sec: appl of loc mod}

\begin{cor}\label{cor: semilin wo log}
Assume that $X \subseteq \mathbb{R}^d = \prod_{i \in [r]} \mathbb{R}^{d_i}$ is semilinear and $X$  does not contain a direct product of $r$ infinite sets (e.g.~if $X$ is $K_{k, \ldots, k}$-free for some $k$). Then  there exists some $\alpha = \alpha(X)$ so that for any $r$-hypergraph $H$ of the form $\left(V_1, \ldots, V_r; X \cap \prod_{i \in [r]} V_i \right)$ for some finite $V_i \subseteq \mathbb{R}^{d_i}$, with $\sum_{i =1}^r |V_i|=n$, we have $|E| \leq \alpha n^{r-1}$.
\end{cor}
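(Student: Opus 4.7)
The plan is to deduce the corollary from Corollary \ref{cor: distal wo log} by realizing $X$ as a definable relation in a suitable geometric, distal, and weakly locally modular first-order structure. Concretely, work in the theory $T$ of $\mathbb R$ regarded as an ordered vector space over $\mathbb R$ (that is, $(\mathbb R,<,+,0,\{\lambda\cdot\}_{\lambda\in\mathbb R})$). This theory is $o$-minimal, hence both geometric and distal, and by Example \ref{ex: lin o-min} it is weakly locally modular since $o$-minimal linearity coincides with weak local modularity.

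By Remark \ref{rem: semilin iff def}, the semilinear set $X\subseteq\prod_{i\in[r]}\mathbb R^{d_i}$ is definable in $T$, say $X=E_b$ for an $\mathcal L$-formula $\varphi(\bar x_1,\dotsc,\bar x_r,\bar y)$ without parameters and some tuple $b\in\mathbb R^e$. The hypothesis that $X$ contains no direct product of $r$ infinite sets translates literally to the hypothesis ``$E_b$ does not contain an $r$-grid $\prod_{i\in[r]}A_i$ with each $A_i$ infinite'' appearing in Corollary \ref{cor: distal wo log}. Applying that corollary yields a constant $\alpha=\alpha(\varphi)$ (depending on $X$) such that for every finite $r$-grid $B=\prod_{i\in[r]}V_i$ with $V_i\subseteq\mathbb R^{d_i}$,
\[
|X\cap B| \;=\; |E_b\cap B| \;\leq\; \alpha\,\delta^r_{r-1}(B).
\]

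Finally, if $n=\sum_{i\in[r]}|V_i|$, then by the AM--GM inequality (or by the trivial bound $n_{i_1}\cdots n_{i_{r-1}}\leq n^{r-1}$ summed over the $\binom{r}{r-1}=r$ subsets), one has $\delta^r_{r-1}(B)\leq r\,n^{r-1}$. Substituting and absorbing the factor $r$ into $\alpha$ gives $|E|\leq\alpha'\,n^{r-1}$ with $\alpha'=r\alpha$ depending only on $X$, which is the desired bound. The only non-routine ingredient here is the reduction to Corollary \ref{cor: distal wo log}; everything else is quantifier elimination for semilinear sets plus bookkeeping with $\delta^r_{r-1}$.
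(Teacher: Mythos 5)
Your proposal is correct and follows essentially the same route as the paper: realize $X$ as definable in the ordered $\mathbb{R}$-vector space structure on $\mathbb{R}$, which is $o$-minimal (hence geometric and distal) and weakly locally modular by Example \ref{ex: lin o-min}, and then apply Corollary \ref{cor: distal wo log}. The only addition is your explicit bookkeeping $\delta^r_{r-1}(B)\leq r\,n^{r-1}$, which the paper leaves implicit and which is fine.
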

\begin{proof}
	As every $o$-minimal structure is distal and every semilinear relation is definable in an ordered vector space over $\mathbb{R}$ which is $o$-minimal and locally modular (Example \ref{ex: lin o-min}), the result follows by Corollary \ref{cor: distal wo log}.
\end{proof}

We recall the following special case of the trichotomy theorem in o-minimal structures restricted to semialgebraic relations.

\begin{fact}\cite[Theorem 1.3]{marker1992additive} \label{fac: mini trich}
	Let $X \subseteq \mathbb{R}^n$ be a semialgebraic but not semilinear set. Then $\times \restriction_{[0,1]^2}$ (i.e.~the graph of multiplication restricted to the unit box) is definable in the first-order structure $(\mathbb{R},<,+,X)$.
\end{fact}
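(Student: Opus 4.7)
The plan is to apply the Peterzil--Starchenko trichotomy (Fact \ref{fac: o-min trich}) to the structure $\mathcal{M} := (\mathbb{R}, <, +, X)$ and extract standard multiplication from the resulting definable real closed field.

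First I would observe that $\mathcal{M}$ is $o$-minimal: every set definable in $\mathcal{M}$ is semialgebraic (since $X$ is), so subsets of $\mathbb{R}$ definable in $\mathcal{M}$ are finite unions of points and intervals. Moreover, $\mathcal{M}$ properly expands $(\mathbb{R}, <, +)$, because by Remark \ref{rem: semilin iff def} semilinearity is equivalent to definability in $(\mathbb{R}, <, +)$ with parameters, and $X$ is not semilinear by hypothesis. Applying Fact \ref{fac: o-min trich} to $\mathcal{M}$: either $\mathcal{M}$ is weakly locally modular or it defines a real closed field. The former is excluded because a linear (weakly locally modular) $o$-minimal expansion of $(\mathbb{R}, <, +)$ defines only semilinear sets (by the Loveys--Peterzil classification of linear $o$-minimal expansions of ordered abelian groups), contradicting the definability of the non-semilinear $X$. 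So $\mathcal{M}$ defines a real closed field $\mathcal{R} = (R, \oplus, \odot, \prec)$ with $R \subseteq \mathbb{R}^k$ of $\mathcal{M}$-dimension $1$.

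Next I would transfer $\mathcal{R}$ onto a subinterval of $\mathbb{R}$. By $o$-minimal cell decomposition in $\mathcal{M}$ there is a definable-in-$\mathcal{M}$ order-preserving bijection $\iota$ from a cofinite part of $R$ onto an open interval $J \subseteq \mathbb{R}$; pulling $\oplus, \odot$ back along $\iota$ yields a definable-in-$\mathcal{M}$ field structure $(J, \oplus_J, \odot_J)$. Now $(J, \oplus_J)$ is a $1$-dimensional definable ordered abelian group in $\mathcal{M}$, and by Pillay's structure theory for $o$-minimal groups together with the presence of $+$ in the language of $\mathcal{M}$, one produces a definable-in-$\mathcal{M}$ order-preserving local isomorphism $\phi : J \to J'$ onto an open interval $J' \subseteq \mathbb{R}$ with $\phi(a \oplus_J b) = \phi(a) + \phi(b)$ whenever both sides are defined. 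Pushing $\odot_J$ forward by $\phi$ gives a definable-in-$\mathcal{M}$ continuous binary operation $\odot_{J'}$ on $J'$ distributing over $+$; by continuity and $\mathbb{Q}$-bilinearity, $\odot_{J'}(x, y) = c\, x\, y$ for some $c \in \mathbb{R}^{\times}$.

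After arranging (by restricting $\phi$ and possibly iterating $\oplus_J$) that $1 \in J'$, the standard multiplication on $J' \times J'$ is defined in $\mathcal{M}$ by the formula
\[ x \cdot y = z \iff \odot_{J'}(x, y) = \odot_{J'}(z, 1), \]
which says $c\, x y = c\, z$, equivalently $xy = z$. Semilinear translations and dilations (definable in $(\mathbb{R}, <, +) \subseteq \mathcal{M}$) then transfer $\cdot$ from $J' \times J'$ to $[0,1] \times [0,1]$, yielding the desired definability of $\times \restriction_{[0,1]^2}$. The main obstacle is the construction of the definable-in-$\mathcal{M}$ group isomorphism $\phi$ with image containing $1$: while $(J, \oplus_J)$ and $(\mathbb{R}, +)$ are locally topologically isomorphic, producing such an isomorphism definably in $\mathcal{M}$ itself (rather than merely semialgebraically in the ambient field) is the key technical step, and crucially uses the presence of $+$ in $\mathcal{M}$ to compare $+$-translates with $\oplus_J$-translates.
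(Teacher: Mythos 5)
First, note that the paper does not prove this Fact at all: it is imported as a black box from \cite{marker1992additive}, whose original argument is a direct one (producing multiplication from a definable function that is not piecewise linear), predating the trichotomy. Your plan --- apply Fact \ref{fac: o-min trich} to $(\mathbb{R},<,+,X)$ to get a definable real closed field and then transfer its multiplication to the standard one --- is therefore a genuinely different (and anachronistic, but legitimate in principle) route. The first half is essentially fine modulo two points you should record: Fact \ref{fac: o-min trich} is stated for $\aleph_1$-saturated models, so you must pass to an elementary extension and reflect the definability of a real closed field back down (definability by a fixed formula with some parameters is first order); and the deduction ``weakly locally modular $\Rightarrow$ every definable set is semilinear'' needs the Loveys--Peterzil analysis plus the observation that definable endomorphisms of $(\mathbb{R},+)$ in an $o$-minimal structure are continuous, hence real scalar multiplications (and that the valuational case is excluded over the archimedean group $(\mathbb{R},+)$).

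The genuine gap is the step you yourself flag as ``the key technical step'': the existence of a definable-in-$\mathcal{M}$ local isomorphism $\phi$ from $(J,\oplus_J)$ onto an interval of $(\mathbb{R},+)$. This does not follow from Pillay's structure theory of $o$-minimal groups, which only equips a definable group with a definable manifold/topological group structure and says nothing about definable isomorphisms between two given one-dimensional groups. Indeed, two definable ordered group-intervals in an $o$-minimal structure need not be locally definably isomorphic: in $(\mathbb{R},<,+,\cdot)$ a truncation of $(\mathbb{R},+)$ and a truncation of $(\mathbb{R}_{>0},\cdot)$ are not, since such an isomorphism would be a local exponential, which is not semialgebraic. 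So a priori the field addition $\oplus_J$ produced by \cite{peterzil1998trichotomy} could be ``exponentially incompatible'' with the ambient $+$, and ruling this out --- i.e.\ showing one can choose the definable field so that its additive group is (a truncation of) $(\mathbb{R},+)$, or equivalently that standard multiplication is definable --- is precisely the content of the Marker--Peterzil--Pillay theorem (and of the later structure theorems for non-linear $o$-minimal expansions of ordered groups, e.g.\ Edmundo's). As written, the step is asserted rather than proved, and any citation strong enough to supply it would essentially be a citation of the statement you are trying to prove; the remainder of your argument (Cauchy functional equation with monotonicity to get $\odot_{J'}(x,y)=cxy$, then rescaling to $[0,1]^2$) is routine once $\phi$ is in hand.
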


Using it, we have the following more explicit variant of Corollary \ref{cor: lin iff loc mod} in the semialgebraic case.

\begin{cor}\label{cor: rec mult from failed zar}
	Let $X \subseteq \mathbb{R}^d$ be a semialgebraic set, and consider the first-order structure $\mathcal{M} = (\mathbb{R},<,+,X)$. Then the following are equivalent.
	\begin{enumerate}
		\item For any $r \in \mathbb{N}$ and any $r$-ary relation $Y \subseteq \prod_{i \in [r]}\mathbb{R}^{d_i}$ not containing an $r$-grid $A = \prod_{i \in [r]}A_i$ with each $A_i \subseteq \mathbb{R}^{d_i}$ infinite, there exists some $\alpha \in \mathbb{R}$ so that $|Y \cap B| \leq \alpha \delta^r_{r-1}(B)$ for every finite $r$-grid $B$.
\item For every $d_1,d_2 \in \mathbb{N}$ and $Y \subseteq \mathbb{R}^{d_1} \times \mathbb{R}^{d_2}$ definable (with parameters) in $\mathcal{M}$, if  $Y$ is $K_{k,k}$-free for some $k \in \mathbb{N}$,
then there exist some $\beta < \frac{4}{3}$ and $\alpha$ such
that: for any $n$ and $B_i \subseteq \mathbb{R}^{d_i}$ with $|B_i| = n$ we have
\[ |X \cap B_1 \times B_2| \leq \alpha n^{\beta}. \] 
		\item $\times \restriction_{[0,1]^2}$ is not definable in $\mathcal{M}$.
	\end{enumerate}
\end{cor}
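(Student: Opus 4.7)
The plan is to establish the cycle of implications $(3) \Rightarrow (1) \Rightarrow (2) \Rightarrow (3)$. Note that in condition $(1)$ the relation $Y$ must implicitly be assumed definable (with parameters) in $\mathcal{M}$, since otherwise no linear bound can possibly hold.

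For $(3) \Rightarrow (1)$, I would invoke the contrapositive of Fact \ref{fac: mini trich} applied to the semialgebraic set $X$: if $\times \restriction_{[0,1]^2}$ is not definable in $\mathcal{M} = (\mathbb{R},<,+,X)$, then $X$ must in fact be semilinear. Hence $\mathcal{M}$ is a reduct of the ordered $\mathbb{R}$-vector space $\mathcal{V} := (\mathbb{R}, <, +, (\lambda \cdot)_{\lambda \in \mathbb{R}})$, whose theory is o-minimal (so geometric and distal) and weakly locally modular by Example \ref{ex: lin o-min}. Every $\mathcal{M}$-definable relation is also $\mathcal{V}$-definable, so Corollary \ref{cor: distal wo log} applied to the defining formula in $\mathcal{V}$ yields the desired linear bound $|Y \cap B| \leq \alpha \delta^r_{r-1}(B)$.

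For $(1) \Rightarrow (2)$, given a $K_{k,k}$-free binary relation $Y$, it cannot contain an infinite $2$-grid (any infinite cartesian product $A_1 \times A_2 \subseteq Y$ would contain $K_{k,k}$). So $(1)$ applied with $r = 2$ and $|B_1| = |B_2| = n$ gives
\[ |Y \cap (B_1 \times B_2)| \leq \alpha \delta^2_1(B_1 \times B_2) = \alpha(|B_1| + |B_2|) = 2\alpha n, \]
yielding $(2)$ with $\beta := 1 < 4/3$.

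For $(2) \Rightarrow (3)$ I would argue the contrapositive. Suppose $\times \restriction_{[0,1]^2}$ is definable in $\mathcal{M}$. Combined with $+$ and scalar multiplication by integers (automatic from $+$), one can rescale into $[0,1]$ and define the usual point-line incidence relation
\[ E := \left\{ ((x_1, x_2),(y_1, y_2)) \in \mathbb{R}^2 \times \mathbb{R}^2 : x_2 = y_1 x_1 + y_2 \right\} \]
(suitably restricted/rescaled). This $E$ is $\mathcal{M}$-definable and $K_{2,2}$-free since two distinct points determine at most one line and two distinct lines meet in at most one point. The optimality of the Szemer\'edi-Trotter bound (the standard Erd\H{o}s lattice construction, scaled to fit in $[0,1]^2$) then produces, for every $n$, finite sets $B_1, B_2$ of size $n$ with $|E \cap (B_1 \times B_2)| = \Omega(n^{4/3})$, contradicting $(2)$ for every $\beta < 4/3$.

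The main technical obstacle lies in the step $(2) \Rightarrow (3)$: one must carefully check that definability of $\times$ only on the unit box, together with $+$, suffices to realize the Szemer\'edi-Trotter extremal construction as an $\mathcal{M}$-definable $K_{2,2}$-free relation and that the $\Omega(n^{4/3})$ lower bound survives the necessary rescaling into $[0,1]$. Once this scaling bookkeeping is handled, the remainder is a direct appeal to the machinery assembled in Section \ref{sec: loc mod gen} and to Marker's trichotomy result (Fact \ref{fac: mini trich}), mirroring the structure of the proof of Corollary \ref{cor: lin iff loc mod}.
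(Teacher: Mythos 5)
Your proof is correct and follows essentially the same route as the paper: $(1)\Rightarrow(2)$ is immediate, $(2)\Rightarrow(3)$ goes via the contrapositive using the definability of the point-line incidence relation on the unit box and the scaled Szemer\'edi--Trotter lower bound, and $(3)\Rightarrow(1)$ reduces to Fact \ref{fac: mini trich} together with the linear Zarankiewicz bound for weakly locally modular (distal, geometric) structures from Section \ref{sec: loc mod gen}. The only cosmetic difference is that in $(3)\Rightarrow(1)$ you apply Fact \ref{fac: mini trich} directly to $X$ to conclude $X$ (hence every $\mathcal{M}$-definable set) is semilinear and then invoke Corollary \ref{cor: distal wo log}, whereas the paper argues the contrapositive by applying Corollary \ref{cor: semilin wo log} and Fact \ref{fac: mini trich} to a definable $Y$ witnessing the failure of $(1)$; you are also right that $(1)$ implicitly restricts to $Y$ definable in $\mathcal{M}$.
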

\begin{proof}
(1) $\Rightarrow$ (2) is obvious. 

(2) $\Rightarrow$ (3) Using $\times \restriction_{[0,1]^2}$, the $K_{2,2}$-free point-line incidence relation in $\mathbb{R}^2$ is definable restricted to $[0,1]^2$, and the standard configurations witnessing  the lower bound in Szemer\'edi-Trotter can be scaled down to the unit box.

(3) $\Rightarrow$ (1)
Assume that (1) does not hold in $(\mathbb{R},<,+,X)$. Then necessarily some $Y$ definable in $(\mathbb{R},<,+,X)$ is not semilinear by Corollary \ref{cor: semilin wo log}. By Fact \ref{fac: mini trich}, if $Y$ is not semilinear then $\times \restriction_{[0,1]^2}$ is definable in the structure $(\mathbb{R},<,+,Y)$, hence in $(\mathbb{R},<,+,X)$.
\end{proof}

\bibliography{refs}
    
\end{document}